\def\compactify{\itemsep=0pt \topsep=0pt \partopsep=0pt \parsep=0pt}
\let\latexusecounter=\usecounter
\newenvironment{Enumerate}
  {\def\usecounter{\compactify\latexusecounter}
   \begin{enumerate}}
  {\end{enumerate}\let\usecounter=\latexusecounter}
\def\eps{\varepsilon}
\DeclareMathOperator*{\argmin}{argmin}
\def\abs#1{\left|#1\right|}
\newcommand{\paragr}[1]{\noindent \textbf{#1}}
\def\matr#1{\boldsymbol{#1}}
\renewcommand{\vec}[1]{\boldsymbol{#1}}
\def\norm#1{\left\|#1\right\|}
\def\poly{\mathrm{poly}}
\DeclareMathOperator*{\Exp}{{\mathbb{E}}}
\DeclareMathOperator*{\Prob}{{\mathbb{P}}}
\DeclareMathOperator*{\Cov}{{\mathrm{Cov}}}
\def\normal{\mathcal{N}}
\newcommand{\reals}{\mathbb{R}}
\newtheorem{theorem}{Theorem}
\newtheorem{proposition}{Proposition}
\newtheorem{lemma}{Lemma}
\newtheorem{claim}{Claim}
\newtheorem{corollary}{Corollary}
\newtheorem{remark}{Remark}
\newtheorem{definition}{Definition}
\newif\ifnotes\notestrue
\definecolor{mygrey}{gray}{0.50}
\newcommand{\notename}[2]{{\textcolor{red}{\footnotesize{\bf (#1:} {#2}{\bf
) }}}}
\newcommand{\notename}[2]{{}}
 \definecolor{niceRed}{RGB}{190,38,38}
\definecolor{blueGrotto}{HTML}{059DC0}
\definecolor{royalBlue}{HTML}{057DCD}
\definecolor{navyBlueP}{HTML}{0B579C}
\definecolor{limeGreen}{HTML}{81B622}
\newcommand{\memb}{M}
\newcommand{\chara}{\mathds{1}}
\newcommand{\Hessian}{\mathbf{H}}
\newcommand{\diag}{\mathrm{diag}}
\newcommand{\symm}{\mathcal{Q}}
\newcommand{\samplO}{\mathcal{O}}
\newcommand{\Domain}{\mathcal{D}}
\newcommand{\dtv}{\mathrm{d}_{\mathrm{TV}}}
\begin{document}

\title{Efficient Statistics, in High Dimensions, \\ from Truncated Samples}
\author{
  \textbf{Constantinos Daskalakis} \\
  \small Massachusetts Institute of Technology \\
  \url{costis@csail.mit.edu}
  \and
  \textbf{Themis Gouleakis} \\
  \small Max Plank Institute for Informatics \\
  \url{themis.gouleakis@gmail.com}
  \and
  \textbf{Christos Tzamos} \\
  \small University of Wisconsin-Madison \\
  \url{tzamos@wisc.edu}
  \and
  \textbf{Manolis Zampetakis} \\
  \small Massachusetts Institute of Technology \\
  \url{mzampet@mit.edu}
}
\maketitle

\begin{abstract}
    We provide an efficient algorithm for the classical problem, going back to
  Galton, Pearson, and Fisher, of estimating, with arbitrary accuracy the
  parameters of a multivariate normal distribution from \textit{truncated
  samples}. Truncated samples from a $d$-variate normal ${\cal
  N}(\vec{\mu},\matr{\Sigma})$ means a samples is only revealed if it falls in
  some subset $S \subseteq \mathbb{R}^d$; otherwise the samples are hidden and
  their count in proportion to the revealed samples is also hidden. We show that
  the mean $\vec{\mu}$ and covariance matrix $\matr{\Sigma}$ can be estimated
  with arbitrary accuracy in polynomial-time, as long as we have oracle access
  to $S$, and $S$ has non-trivial measure under the unknown $d$-variate normal
  distribution. Additionally we show that without oracle access to $S$, any
  non-trivial estimation is impossible.
\end{abstract}

  \section{Introduction} \label{sec:intro}

  A classical challenge in Statistics is estimation from truncated or censored
samples. Truncation occurs when samples falling outside of some subset $S$ of
the support of the distribution are not observed, and their count in proportion
to the observed samples is also not observed. Censoring is similar except the
fraction of samples falling outside of $S$ is given. Truncation and censoring of
samples have myriad manifestations in business, economics, manufacturing,
engineering, quality control, medical and biological sciences, management
sciences, social sciences, and all areas of the physical sciences. As a simple
illustration, the values that insurance adjusters observe are usually
left-truncated, right-censored, or both. Indeed, clients usually only report
losses that are over their deductible, and may report their loss as equal to the
policy limit when their actual loss exceeds the policy limit as this is the
maximum that the insurance company would pay.

  Statistical estimation under truncated or censored samples has had a long
history in Statistics, going back to at least the work of Daniel Bernoulli who
used it to demonstrate the efficacy of smallpox vaccination in
1760~\cite{Bernoulli1766}. In 1897, Galton analyzed truncated samples
corresponding to registered speeds of American trotting
horses~\cite{Galton1897}. His samples consisted of running times of horses that
qualified for registration by trotting around a one-mile course in not more than
2 minutes and 30 seconds while harnessed to a two-wheeled cart carrying a weight
of not less than 150 pounds including the driver. No records were kept for the
slower, unsuccessful trotters, and their number thus remained unknown. Assuming
that the running times prior to truncation were normal, Galton applied simple
estimation procedures to estimate their mean and standard deviation.
Dissatisfaction with Galton's estimates led Pearson~\cite{Pearson1902} and later
Pearson and Lee~\cite{PearsonLee1908} and Lee~\cite{Lee1915} to use the method
of moments in order to estimate the mean and standard deviation of a univariate
normal distribution from truncated samples. A few years later, Fisher used the
method of maximum likelihood to estimate univariate normal distributions from
truncated samples~\cite{fisher31}.

  Following the early works of Galton, Pearson, Lee and Fisher, there has been a
large volume of research devoted to estimating the truncated normal or other
distributions; see e.g.~\cite{Schneider86,Cohen91,BalakrishnanCramer} for an
overview of this work. However, estimation methods, based on moments or maximum
likelihood estimation, are intractable for high-dimensional data and are only
known to be consistent in the limit, as the number of samples tends to infinity,
even for normal distributions. With infinitely many samples, it seems intuitive
that given the density of a multivariate normal ${\cal
N}(\vec{\mu},\matr{\Sigma})$ conditioned on a measurable set $S$, the ``local
shape'' of the density inside $S$ should provide enough information to
reconstruct the density everywhere. Indeed, results of
Hotelling~\cite{Hotelling48} and Tukey~\cite{Tukey49} prove that the conditional
mean and variance on any measurable $S$ are in one-to-one correspondence with
the un-conditional parameters. When the sample is finite, however, it is not
clear what features of the sample to exploit to estimate the parameters, and in
particular it is unclear how sensitive to error is the correspondence between
conditional and unconditional parameters. To illustrate, in
Figure~\ref{fig:figure1}, we show one thousand  samples from two bi-variate
normals, which are far in total variation distance, truncated to a box.
Distinguishing between the two Gaussians is not immediate despite the large
total variation distance between these normals.
\begin{figure}
	\centering
		\includegraphics[width=.60\textwidth]{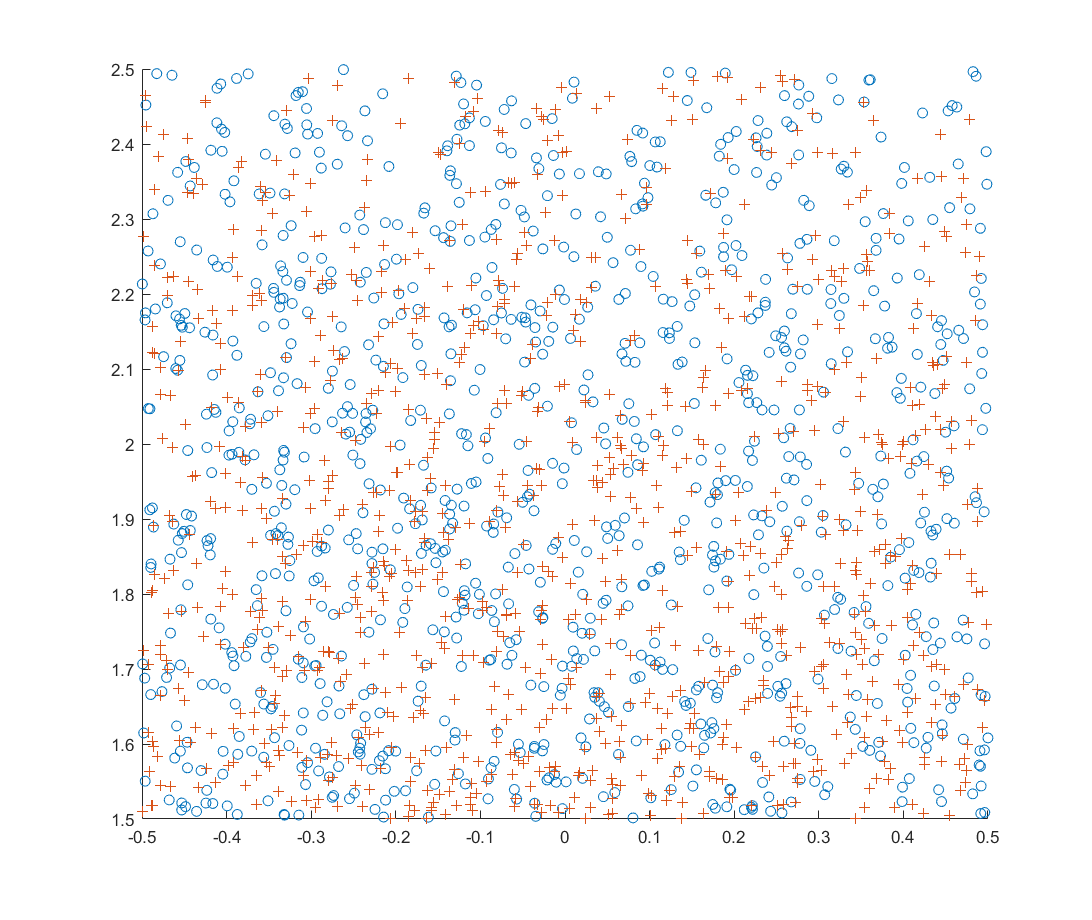}
			\caption{One thousand samples from ${\cal N}((0,1), I)$ and from
			${\cal N}((0,0), 4I)$ truncated into the $[-0.5,0.5]\times [1.5,2.5]$ box.
			We leave it to the reader to guess which sample comes from which.}
	\label{fig:figure1}
\end{figure}

  In this paper, we revisit this classical problem of multivariate normal
estimation from truncated samples to obtain polynomial time and sample
algorithms, while also accommodating a very general truncation model. We suppose
that samples, $\vec{x}^{(1)}, \vec{x}^{(2)}, \ldots$, from an unknown
$d$-variate normal  $\normal(\vec{\mu},\matr{\Sigma})$ are only revealed if they
fall into some subset $S \subseteq \mathbb{R}^d$; otherwise the samples are
hidden and their count in proportion to the revealed samples is also hidden. We
make no assumptions about $S$, except that its measure with respect to the
unknown distribution is non-trivial, say $\alpha=1\%$, and that we are given
oracle access to this set, namely, given a point $x$ the oracle outputs
$\chara\{\vec{x} \in S\}$. Otherwise, set $S$ can be any measurable set, and in
particular need not be convex. In contrast, to the best of our knowledge,  prior
work only considers sets $S$ that are boxes, while still not providing
computational tractability, or finite sample bounds for consistent estimation.
We provide the first time and sample efficient estimation algorithms even for
simple truncation sets, but we also accommodate very general sets. This, in
turn, enables statistical estimation in settings where set $S$ is determined by
a complex set of rules, as it happens in many important applications, especially
in high-dimensional settings. Revisiting our earlier example, insurance policies
on a collection of risks may be complex, so the adjustor's observation set $S$
may be determined by a complex function on the loss vector $\vec{x}$.

  Our main result is that the mean vector $\vec{\mu}$ and covariance matrix
$\matr{\Sigma}$ of an unknown $d$-variate normal can be estimated to arbitrary
accuracy in polynomial-time from a truncated sample. In particular,

\begin{theorem} \label{thm:main theorem}
	  Given oracle access to a measurable set $S$, whose measure under some
	unknown $d$-variate normal ${\cal N}(\vec{\mu},\matr{\Sigma})$ is at least
	some constant $\alpha>0$, and samples $\vec{x}^{(1)}, \vec{x}^{(2)}, \ldots$
	from $\normal(\vec{\mu},\matr{\Sigma})$ that are truncated to this set, there
	exists a polynomial-time algorithm that recovers estimates $\hat{\vec{\mu}}$
	and $\hat{\matr{\Sigma}}$. In particular, for all $\epsilon >0$, the algorithm
	uses $\tilde{O}(d^2/\eps^2)$ truncated samples and queries to the oracle and
	produces estimates that satisfy the following with probability at least
	$99\%$:
	\begin{align}
		\norm{\matr{\Sigma}^{-1/2}(\vec{\mu}-\hat{\vec{\mu}})}_{2} \le
		\eps;~\quad~\text{and}~\quad~
		\norm{\matr{I}-\matr{\Sigma}^{-1/2}\hat{\matr{\Sigma}}\matr{\Sigma}^{-1/2}}_{F} \le
		\eps.
	  \label{eq:estimation guarantees}
	\end{align} Note that under the above
	conditions the total variation distance between ${\cal
	N}(\vec{\mu},\matr{\Sigma})$ and ${\cal
	N}(\hat{\vec{\mu}},\hat{\matr{\Sigma}})$ is $O(\eps)$, and the number of
	samples used by the algorithm is optimal, even when there is no truncation,
	i.e.~when $S = \mathbb{R}^d$.
\end{theorem}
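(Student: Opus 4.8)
The plan is to compute the maximum-likelihood estimate by projected stochastic gradient descent on the negative log-likelihood of the truncated sample, exploiting the fact that, in the natural parametrization of the Gaussian exponential family, this objective is convex. Concretely, reparametrize $\mathcal{N}(\vec{\mu},\matr{\Sigma})$ by its natural parameters $\vec{v}=\matr{\Sigma}^{-1}\vec{\mu}$ and $\matr{T}=\matr{\Sigma}^{-1}$, so that the truncated density on $S$ is proportional to $\exp(\vec{v}^\top x - \tfrac12 x^\top \matr{T} x)$ with log-partition $\log\int_S \exp(\vec{v}^\top x-\tfrac12 x^\top\matr{T}x)\,dx$. Since this log-partition is convex in $(\vec{v},\matr{T})$ (it is the log-integral of a log-linear family over the fixed domain $S$), the population negative log-likelihood is convex, and its gradient with respect to the natural parameters equals the difference between the expected sufficient statistics $(x, xx^\top)$ under the truncated true distribution and under the current model truncated to $S$. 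This identity is what makes the problem tractable.

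To run SGD I first need a bounded domain. Using an initial batch of truncated samples, I would compute their empirical mean and covariance and apply the affine map that whitens them, so that after preprocessing the transformed truncated data has mean $\approx 0$ and covariance $\approx I$. The key consequence, which I would prove using the $\alpha$-mass assumption, is that the transformed true parameters then lie in a bounded convex region $\Domain$: the true mean has bounded norm and the true covariance is spectrally sandwiched, $c_1 I \preceq \matr{\Sigma} \preceq c_2 I$, for constants depending only on $\alpha$. I would take $\Domain$ (expressed in natural parameters, where it is convex) as the projection set. For the stochastic gradient at the current iterate I would draw a fresh truncated sample to supply the observed statistic $(x,xx^\top)$, and obtain the model statistic by rejection sampling from the current Gaussian and keeping the first draw that the oracle reports lies in $S$; the difference is an unbiased estimate of the gradient. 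Staying inside $\Domain$ guarantees the acceptance probability of the rejection step stays bounded below by a constant, so each gradient costs $O(1)$ oracle queries in expectation.

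The convergence analysis rests on two quantitative bounds. First, strong convexity: the Hessian of the objective is the covariance matrix of the sufficient statistics under the truncated model, i.e.~the truncated Fisher information, and I would lower-bound its smallest eigenvalue by a constant $\lambda(\alpha)>0$ uniformly over $\Domain$. Second, a variance bound: over $\Domain$ the sufficient statistics have bounded fourth moments, so the squared norm of each stochastic gradient is $O(d^2)$ in expectation. Feeding these into the standard rate for projected SGD on a $\lambda$-strongly-convex objective with gradient variance $\sigma^2=O(d^2)$ yields expected squared parameter error $O(\sigma^2/(\lambda^2 T))$, so that $T=\tilde O(d^2/\eps^2)$ steps --- hence $\tilde O(d^2/\eps^2)$ samples and oracle queries --- drive the error below $\eps$. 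A final standard calculation converts the natural-parameter error bound into the guarantees of \eqref{eq:estimation guarantees} and the $O(\eps)$ total variation bound.

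The main obstacle is the uniform strong-convexity bound, i.e.~showing that the truncated Fisher information stays bounded away from singularity throughout $\Domain$. This is precisely where the non-triviality assumption $\alpha>0$ is essential: I would argue that a set carrying at least $\alpha$ Gaussian mass cannot make the conditional distribution of the sufficient statistics degenerate, using Gaussian anti-concentration to show that the conditional covariance of $(x,xx^\top)$ cannot collapse in any direction. Establishing this lower bound with the right dependence on $\alpha$, and verifying that the preprocessing indeed confines the true parameters to a bounded $\Domain$, are the two steps I expect to require the most care; the matching sample-complexity lower bound is inherited from the untruncated Gaussian estimation problem.
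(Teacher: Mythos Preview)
Your proposal is correct and follows essentially the same approach as the paper: reparametrize to natural parameters, whiten using the empirical truncated moments to confine the true parameters to a bounded convex domain, run projected SGD with rejection-sampling gradients, and establish strong convexity of the truncated Fisher information via Gaussian polynomial anti-concentration (the paper invokes Carbery--Wright). You have also correctly identified the two delicate steps---the uniform strong-convexity lower bound over the domain and the fact that every parameter in the domain assigns $S$ mass bounded away from zero (so rejection sampling is efficient)---as the places requiring the most work.
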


  It is important to note that the measure $\alpha$ assigned by the unknown
distribution on $S$ can be arbitrarily small, yet the accuracy of estimation can
be driven to arbitrary precision. Moreover, we note that without oracle access
to the indicator $\vec{1}_{x \in S}$, it is information-theoretically impossible
to even attain a crude approximation to the unknown normal, even in one
dimension, namely,

\begin{theorem} \label{thm:lower bound}
	  For all $\alpha > 0$, given infinitely many samples from a univariate normal
	${\cal N}(\mu, \sigma^2)$, which are truncated to an unknown set $S$ of
	measure $\alpha$, it is impossible to estimate parameters $\hat{\mu}$ and
	$\hat{\sigma}^2$ such that the distributions ${\cal N}(\mu, \sigma^2)$ and
	${\cal N}(\hat{\mu}, \hat{\sigma}^2)$ are guaranteed to be within
	$\frac{1-\alpha}{2}$ in total variation distance.
\end{theorem}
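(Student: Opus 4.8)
The statement is an information-theoretic impossibility that must survive infinitely many samples, so the natural route is a two-point (indistinguishability) argument. Since an algorithm only ever sees draws from the \emph{truncated} law, any two instances that induce the same distribution of truncated samples are information-theoretically indistinguishable, no matter how many samples are drawn. The plan is therefore to exhibit two univariate Gaussians $\normal(\mu_0,\sigma_0^2)$ and $\normal(\mu_1,\sigma_1^2)$ together with measurable sets $S_0,S_1$, where $S_i$ has measure $\alpha$ under $\normal(\mu_i,\sigma_i^2)$, such that conditioning $\normal(\mu_i,\sigma_i^2)$ on $S_i$ yields one and the same observable law $Q$ for both $i=0$ and $i=1$, while the two underlying Gaussians are far apart, $d_{\mathrm{TV}}(\normal(\mu_0,\sigma_0^2),\normal(\mu_1,\sigma_1^2)) > 1-\alpha$.

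Granting such a pair, the conclusion is immediate. An estimator receives the same input law $Q$ in both instances, so it must return the same $(\hat\mu,\hat\sigma^2)$; if it were guaranteed to achieve $d_{\mathrm{TV}}(\normal(\mu_i,\sigma_i^2),\normal(\hat\mu,\hat\sigma^2))\le \tfrac{1-\alpha}{2}$ on the true instance $i$, then the triangle inequality would force $d_{\mathrm{TV}}(\normal(\mu_0,\sigma_0^2),\normal(\mu_1,\sigma_1^2))\le \tfrac{1-\alpha}{2}+\tfrac{1-\alpha}{2}=1-\alpha$, contradicting the separation. For the separation itself I would take the two Gaussians to be unit-variance with means $0$ and $m$, and fix the gap $m$ so that their overlap $2\Phi(-m/2)$ is (just below) $\alpha$; this makes their total variation distance (just above) $1-\alpha$, which is exactly the source of the $\tfrac{1-\alpha}{2}$ threshold appearing in the statement.

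The real work, and the step I expect to be the main obstacle, is the construction of $S_0$ and $S_1$ realizing a common conditional law $Q$ while each carries mass exactly $\alpha$ under its own Gaussian. The delicate point is that hard truncation preserves density values, so the conditional densities are just the restricted-and-renormalized Gaussian densities; wherever the two observable laws place mass they must agree, and this rigidity of Gaussian density ratios is precisely what makes the matching nontrivial and dictates that $S_0,S_1$ be chosen as general (in particular non-box, non-convex) sets rather than intervals. Establishing that such a pair exists — equivalently, writing a single target law $Q$ simultaneously as the truncation of each of the two far-apart Gaussians to a set of the prescribed measure — is the crux of the whole argument. It is also the place where the power of the oracle model shows up in reverse: it is exactly the freedom to use arbitrary measurable truncation sets, which the main algorithm tames via oracle access, that an adversary exploits here to hide the identity of the underlying normal. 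Once the construction is in hand, the indistinguishability observation and the triangle-inequality computation above close the proof.
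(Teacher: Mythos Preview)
Your reduction to a two-point indistinguishability argument and the triangle-inequality endgame are exactly right, and they match the paper's Corollary~\ref{cor:lower bound}. The genuine gap is in the ``crux'' step you flag but do not resolve: the pair of \emph{deterministic} sets $S_0,S_1$ you are looking for does not exist. If $\normal(\mu_0,\sigma_0^2,S_0)$ and $\normal(\mu_1,\sigma_1^2,S_1)$ coincide as probability measures, they share the same support, so $S_0=S_1=:S$ up to a null set; equality of the conditional densities on $S$ then forces the ratio $\phi_0(x)/\phi_1(x)$ to be constant on $S$. But $\log(\phi_0/\phi_1)$ is a nondegenerate affine or quadratic function of $x$ whenever $(\mu_0,\sigma_0^2)\neq(\mu_1,\sigma_1^2)$, so its level sets have Lebesgue measure zero, contradicting $\normal(\mu_i,\sigma_i^2;S)=\alpha>0$. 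In other words, the very ``rigidity of Gaussian density ratios'' you mention is fatal to a deterministic construction, not merely an obstacle to overcome by choosing non-convex sets.

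The paper sidesteps this by making the truncation set \emph{random}. For each $i$ it defines a distribution $\mathcal{D}_i$ over sets by including each point $x$ independently with probability $\min\{\phi_{3-i}(x)/\phi_i(x),1\}$. The key observation (Lemma~\ref{lem:indistinguishability}) is that, by the principle of deferred decisions, drawing $S_i\sim\mathcal{D}_i$ and then sampling from $\normal(\mu_i,\sigma_i^2,S_i)$ is the same process as drawing $x\sim\normal(\mu_i,\sigma_i^2)$ and accepting with probability $\min\{\phi_{3-i}(x)/\phi_i(x),1\}$, provided no point is ever sampled twice --- which almost surely never happens for continuous laws. This rejection sampling yields, for both $i$, the common law with density $\tfrac{1}{\alpha}\min\{\phi_0,\phi_1\}$. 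So the indistinguishability holds at the level of the entire infinite sample sequence, and your triangle-inequality conclusion goes through unchanged. The missing idea in your proposal is precisely this move from a fixed $S$ to a random $S$ together with the deferred-randomness coupling.
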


\paragraph{Overview of the Techniques.} The proofs of Theorems~\ref{thm:main
theorem} and~\ref{thm:lower bound} are provided in Sections~\ref{sec:upper
bound} and~\ref{sec:lower bound} respectively. Here we provide an overview of
our proof of Theorem~\ref{thm:main theorem}. Our algorithm shown in
Figure~\ref{fig:algorithm} is (Projected) Stochastic Gradient Descent (SGD) on
the negative log-likelihood of the truncated samples. Notice that we cannot
write a closed-form of the log-likelihood as the set $S$ is arbitrary and
unknown to us. Indeed, we only have oracle access to this set and can thus not
write down a formula for the measure of $S$ under different estimates of the
parameters. While we cannot write a closed-form expression for the negative
log-likelihood, we still show that it is convex for arbitrary sets $S$, as long
as we re-parameterize our problem in terms of
$\vec{v}=\matr{\Sigma}^{-1}\vec{\mu}$ and $\matr{T}=\matr{\Sigma}^{-1}$ (see
Lemma~\ref{lemma:reparameterized convex}). Using anti-concentration of
polynomials of the Gaussian measure, we show that the negative log-likelihood is
in fact strongly convex, with a constant that depends on the measure of $S$
under the current estimate $(\vec{v},\matr{T})$ of the parameters (see
Lemma~\ref{lemma:strong convexity}). In particular, to maintain strong
convexity, SGD must remain within a region of parameters $(\vec{v},\matr{T})$
that assign non-trivial measure to $S$. We show that the pair of parameters
$(\vec{v},\matr{T})$ corresponding to the conditional (on $S$) mean and
covariance, which can be readily estimated from the truncated sample, satisfies
this property (see Corollary~\ref{corollary:initialization}). So we use these as
our initial estimation of the parameters. Moreover, we define a convex set of
parameters $(\vec{v},\matr{T})$ that all assign non-trivial measure to $S$. This
set contains both our initialization and the ground truth (see
Lemmas~\ref{lemma:sufficient constraint} and~\ref{lemma:initialization satisfies
sufficient}), and we can also efficiently project on it (see
Lemma~\ref{lemma:efficient projection}). So we run our Projected Gradient
Descent procedure on this set. As we have already noted, we have no closed-form
for the log-likelihood or its gradient. Nevertheless, we show that, given oracle
access to set $S$, we can get un-biased samples for the gradient of the
log-likelihood function  using rejection sampling from the normal distribution
defined our current estimate of the parameters $(\vec{v},\matr{T})$ (see
Lemma~\ref{lemma:unbiased estimate of sample}). For this additional reason, it
is important to keep the invariant that SGD remains within a set of parameters
that all assign non-trivial measure to $S$.

\paragraph{Related work.} We have already discussed work on censored and
truncated statistical estimation. More broadly, this problem falls in the realm
of robust statistics, where there has been a strand of recent works studying
robust estimation and learning in high dimensions. A celebrated result by Candes
et al.~\cite{CLMW11} computes the PCA of a matrix, even when a constant
fraction of its entries to be adversarially corrupted, but it requires the
locations of the corruptions to be relatively evenly distributed. Related work
of Xu et al. \cite{XCM10} provides a robust PCA algorithm for arbitrary
corruption locations, requiring at most $50\%$ of the points to be corrupted.

Closer to our work, \cite{DKK+16b,LRV16,DKK+17,DKK+18} perform robust estimation
of the parameters of multi-variate Gaussian distributions in the presence of
corruptions to a small $\varepsilon$ fraction of the samples, allowing for both
deletions of samples and additions of samples that can also be chosen adaptively
(i.e.~after seeing the sample generated by the Gaussian). The authors in
\cite{CSV17} show that corruptions of an arbitrarily large fraction of samples
can be tolerated as well, as long as we allow ``list decoding'' of the
parameters of the Gaussian. In particular, they design learning algorithms that
work when an $(1 - \alpha)$-fraction of the samples can be adversarially
corrupted, but output a set of $poly(1/\alpha)$ answers, one of which is
guaranteed to be accurate.

Similar to~\cite{CSV17}, we study a regime where only an arbitrarily small
constant fraction of the samples from a normal distribution can be observed. In
contrast to~\cite{CSV17}, however, there is a fixed set $S$ on which the samples
are revealed without corruption, and we have oracle access to this set. The
upshot is that we can provide a single accurate estimation of the normal rather
than a list of candidate answers as in \cite{CSV17}, while accommodating
a much larger number of deletions of samples compared to \cite{DKK+16b,DKK+18}.

Other robust estimation works include robust linear regression \cite{BJK15} and
robust estimation algorithms under sparsity assumptions \cite{Li17,BDLS17}.
In~\cite{HM13}, the authors study robust subspace recovery having both upper and
lower bounds that give a trade-off between efficiency and robustness. Some
general criteria for robust estimation are formulated in \cite{SCV18}.

  \section{Preliminaries} \label{sec:model}

\paragr{Notation.} We use $\langle \vec{x}, \vec{y} \rangle$ for the inner
product of $\vec{x}, \vec{y} \in \reals^d$. We use $\matr{I}_d$ to refer to the
identity matrix in $d$ dimensions, where we may skip the subscript when the
dimensions are clear. We use $\matr{E}_{i, j}$ to refer to the all zero matrix
with one $1$ at the $(i, j)$ entry. Let $\matr{A} \in \reals^{d \times d}$, we
define $\matr{A}^{\flat} \in \reals^{d^2}$ to be the standard vectorization of
$\matr{A}$. We define $\sharp$ to be the inverse operator, i.e.
$\left(\matr{A}^{\flat}\right)^{\sharp} = \matr{A}$. Let also $\symm_d$ be the
set of all the symmetric $d \times d$ matrices. The covariance matrix between
two random variables $\vec{x}, \vec{y}$ is $\Cov[\vec{x}, \vec{y}]$.
\smallskip

\paragr{Vector and Matrix Norms.} We define the $\ell_p$-norm of
$\vec{x} \in \reals^{d}$ to be
$\norm{\vec{x}}_p = \left( \sum_i x_i^p \right)^{1/p}$ and the
$\ell_{\infty}$-norm of $\vec{x}$ to be
$\norm{\vec{x}}_{\infty} = \max_{i} \abs{x_i}$. We also define the
spectral norm of a matrix $\matr{A}$ is equal to
\[ \norm{\matr{A}}_2 = \max_{\vec{x} \in \reals^d, \vec{x}
\neq 0} \frac{\norm{\matr{A} \vec{x}}_2}{\norm{\vec{x}}_2}. \]
It is well known that $\norm{\matr{A}}_2 = \max\{\abs{\lambda_i}\}$, where
$\lambda_i$'s are the eigenvalues of $\matr{A}$. The Frobenius norm of a matrix
$\matr{A}$ is defined as $\norm{\matr{A}}_F=\norm{\matr{A}^{\flat}}_2$. The
\emph{Mahalanobis distance} between two vectors $\vec{x}$, $\vec{y}$ given a
covariance matrix $\Sigma$ is defined as
\[ \norm{\vec{x}-\vec{y}}_{\Sigma}=\sqrt{(\vec{x}-\vec{y})^T\matr{\Sigma}^{-1}(\vec{x}-\vec{y})}. \]

\paragr{Truncated Gaussian Distribution.} Let
$\normal(\vec{\mu}, \matr{\Sigma})$ be the normal distribution with mean
$\vec{\mu}$ and covariance matrix $\matr{\Sigma}$, with the following
probability density function
\begin{align} \label{eq:normalDensityFunction}
  \normal(\vec{\mu}, \matr{\Sigma}; \vec{x}) =
  \frac{1}{\sqrt{\det(2 \pi \matr{\Sigma})}}
  \exp \left( - \frac{1}{2} (\vec{x} - \vec{\mu})^T \matr{\Sigma}^{-1}
  (\vec{x} - \vec{\mu}) \right).
\end{align}
Also, let $\normal(\vec{\mu}, \matr{\Sigma}; S)$ denote the
\textit{probability mass of a set} $S \subseteq \reals^d$ under the Gaussian
measure $\normal(\vec{\mu}, \matr{\Sigma})$. Let $S \subseteq \reals^d$ be a
subset of the $d$-dimensional Euclidean space, we define the
\textit{$S$-truncated normal distribution}
$\normal(\vec{\mu}, \matr{\Sigma}, S)$ the normal distribution
$\normal(\vec{\mu}, \matr{\Sigma})$ conditioned on taking values in the subset
$S$. The probability density function of $\normal(\vec{\mu}, \matr{\Sigma}, S)$
is the following
\begin{equation} \label{eq:truncatedNormalDensityFunction}
  \normal(\vec{\mu}, \matr{\Sigma}, S; \vec{x}) =
    \begin{cases}
      \frac{1}{\normal(\vec{\mu}, \matr{\Sigma}; S)}
      \cdot \normal(\vec{\mu}, \matr{\Sigma}; \vec{x}) & ~~~ \vec{x} \in S \\
      0                                                                                                                  & ~~~ \vec{x} \not\in S
    \end{cases} .
\end{equation}

  Throughout the paper we assume that all the covariance matrices $\Sigma$ are
full rank. The case where $\Sigma$ is not full rank can be easily detected by
drawing $d$ samples and testing whether they are not linearly independent. In
that case, one can solve the estimation problem in the subspace that those
samples span.
\smallskip

\paragr{Membership Oracle of a Set.} Let $S \subseteq \reals^d$ be a subset of
the $d$-dimensional Euclidean space. A \textit{membership oracle} of $S$ is an
efficient procedure $\memb_S$ that computes the characteristic function of $S$,
i.e. $\memb_S(\vec{x}) = \chara\{\vec{x} \in S\}$.

\subsection{Useful Concentration Results}

The following lemma is useful in cases where one wants to show concentration of
a weighted sum of squares of i.i.d random variables.
\begin{theorem}[Lemma 1 of Laurent and Massart \cite{LaurentM00}.] \label{thm:MassartConcentrationLemma}
    Let $x^{(1)}, \dots, x^{(n)}$ independent identically distributed random
  variables following $\normal(0, 1)$ and let $\vec{a} \in \reals_+^d$. Then,
  the following inequalities hold for any $t \in \reals_+$.
  \[ \Prob \left( \sum_{i = 1}^d a_i \left(\left(x^{(i)}\right)^2 - 1\right) \ge 2 \norm{\vec{a}}_2 \sqrt{t} + 2 \norm{\vec{a}}_{\infty} t \right) \le \exp(-t), \]
  \[ \Prob \left( \sum_{i = 1}^d a_i \left(\left(x^{(i)}\right)^2 - 1\right) \le - 2 \norm{\vec{a}}_2 \sqrt{t}\right) \le \exp(-t). \]
\end{theorem}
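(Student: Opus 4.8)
The plan is to use the Cram\'er--Chernoff (moment generating function) method, treating the two tails separately. Write $Z = \sum_{i=1}^d a_i (X_i^2 - 1)$, so that the two claims are precisely an upper and a lower tail bound on $Z$. Since the $X_i$ are independent and $\Exp[e^{s X_i^2}] = (1 - 2s)^{-1/2}$ for $s < 1/2$, the cumulant generating function factorizes as
\[
\log \Exp[e^{\lambda Z}] = \sum_{i=1}^d \left( -\lambda a_i - \tfrac12 \log(1 - 2\lambda a_i) \right),
\]
valid for $0 \le \lambda < 1/(2\norm{\vec{a}}_{\infty})$, and analogously for $-Z$ with $+2\lambda a_i$ inside the logarithm, which imposes no upper restriction on $\lambda$.

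For the upper tail, the first step is the elementary scalar inequality $-u - \tfrac12 \log(1 - 2u) \le u^2/(1 - 2u)$ for $u \in [0, 1/2)$, which I would verify by showing the difference vanishes at $0$ and has nonnegative derivative. Applying it termwise with $u = \lambda a_i$ and bounding $1 - 2\lambda a_i \ge 1 - 2\lambda \norm{\vec{a}}_{\infty}$ yields the Bernstein-type bound
\[
\log \Exp[e^{\lambda Z}] \le \frac{\lambda^2 \norm{\vec{a}}_2^2}{1 - 2\lambda \norm{\vec{a}}_{\infty}}.
\]
Combining with Markov's inequality $\Prob(Z \ge x) \le e^{-\lambda x} \Exp[e^{\lambda Z}]$ and performing the standard optimization over $\lambda \in (0, 1/(2\norm{\vec{a}}_\infty))$ — the computation that converts a Bernstein bound with variance proxy $\norm{\vec{a}}_2^2$ and scale $\norm{\vec{a}}_\infty$ into a deviation bound — produces exactly $\Prob(Z \ge 2\norm{\vec{a}}_2 \sqrt{t} + 2\norm{\vec{a}}_\infty t) \le e^{-t}$.

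For the lower tail the analysis is cleaner and gives a purely sub-Gaussian bound with no $\norm{\vec{a}}_\infty$ term. Here I would use the sharper inequality $u - \tfrac12 \log(1 + 2u) \le u^2$ for $u \ge 0$ (again: equality at $0$, and $2u/(1+2u) \le 2u$ controls the derivative). This gives $\log \Exp[e^{-\lambda Z}] \le \lambda^2 \norm{\vec{a}}_2^2$ for every $\lambda \ge 0$, hence $\Prob(Z \le -x) \le \exp(-\lambda x + \lambda^2 \norm{\vec{a}}_2^2)$; taking $\lambda = x/(2\norm{\vec{a}}_2^2)$ gives $\exp(-x^2/(4\norm{\vec{a}}_2^2))$, and setting $x = 2\norm{\vec{a}}_2 \sqrt{t}$ yields the claimed $e^{-t}$.

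The main obstacle — indeed the only non-routine point — lies in the upper tail, where the logarithmic term is finite only on the bounded interval $\lambda \in [0, 1/(2\norm{\vec{a}}_\infty))$, so the optimizing $\lambda$ must be kept inside this range; this boundary effect is exactly why the linear-in-$t$ correction $2\norm{\vec{a}}_\infty t$ (rather than a purely Gaussian $\sqrt{t}$ term) appears. The lower tail faces no such constraint, which is why it is governed solely by the variance proxy $\norm{\vec{a}}_2$. Everything else reduces to the two elementary scalar inequalities above together with a one-line optimization of $\lambda$.
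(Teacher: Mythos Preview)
Your proposal is correct and is essentially the original Laurent--Massart argument via the Cram\'er--Chernoff method. Note, however, that the paper does not actually prove this statement: it is quoted verbatim from \cite{LaurentM00} and used as a black box, so there is no ``paper's own proof'' to compare against --- your outline simply reproduces the standard one.
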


The following matrix concentration result is also useful in order to show that the empirical covariance matrix of samples drawn from an identity covariance distribution is itself close to identity in the Frobenius norm.

\begin{theorem}[Corollary 4.12 of Diakonikolas et al. \cite{DKK+16}] \label{thm:DiakonikolasConcentrationLemma}
    Let $\rho, \tau > 0$ and $\vec{x}^{(1)}, \dots, \vec{x}^{(n)}$ be i.i.d
  samples from $\normal(\vec{0}, \matr{I})$. There exist a value
  $\delta_2 = O(\rho\log(1/\rho))$, such that if
  $n = \Omega(\frac{d^2 + \log(1/\tau)}{\delta_2^2})$ then
  \[ \Prob\left( \exists T\subseteq [n]: \vert T\vert\leq \rho n ~~\mathrm{ and }~~ \norm{\sum_{i \in T} \frac{1}{\abs{T}}\vec{x}^{(i)} \vec{x}^{(i) T} - \matr{I}}_F \geq \Omega\left(\delta_2\frac{n}{\vert T\vert}\right) \right) \leq \tau. \]
\end{theorem}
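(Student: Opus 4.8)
The plan is to prove a uniform \emph{stability} bound: for \emph{every} subset $T$ of size at most $\rho n$ simultaneously, the centered sum $\matr{M}_T := \sum_{i \in T}(\matr{X_i}\matr{X_i^T} - \matr{I})$ has Frobenius norm $O(\delta_2 n)$, which is equivalent to the stated normalized bound after dividing by $\abs{T}$. The first step is to dualize the Frobenius norm: since $\matr{M}_T$ is symmetric, $\norm{\matr{M}_T}_F = \sup_{\matr{A}} \langle \matr{M}_T, \matr{A}\rangle$ over symmetric $\matr{A} \in \symm_d$ with $\norm{\matr{A}}_F = 1$, and a standard $\epsilon$-net argument replaces this supremum by a maximum over a finite net $\mathcal{N}_\epsilon$ of the unit Frobenius sphere, of size $(3/\epsilon)^{d^2}$, at the cost of a constant factor. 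For a fixed test matrix $\matr{A}$ with eigenvalues $\lambda_1, \dots, \lambda_d$ (so $\sum_j \lambda_j^2 = 1$), diagonalizing gives $\langle \matr{X_i}\matr{X_i^T} - \matr{I}, \matr{A}\rangle = \matr{X_i^T}\matr{A}\matr{X_i} - \mathrm{tr}(\matr{A}) = \sum_j \lambda_j (Z_{ij}^2 - 1)$, where the $Z_{ij}$ are i.i.d.\ $\normal(0,1)$ (a fixed rotation of each independent $\vec{X_i}$). Hence $\langle \matr{M}_T, \matr{A}\rangle$ is exactly a weighted sum of independent $(Z^2-1)$ terms, the quantity controlled by the Laurent--Massart bound in Theorem~\ref{thm:MassartConcentrationLemma}.

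For a fixed pair $(T, \matr{A})$ with $\abs{T} = k$, the coefficient vector $\vec{a}$ consists of $k$ copies of $(\lambda_1, \dots, \lambda_d)$, so $\norm{\vec{a}}_2 = \sqrt{k}$ and $\norm{\vec{a}}_\infty = \max_j \abs{\lambda_j} \le 1$. Theorem~\ref{thm:MassartConcentrationLemma} then yields $\Prob(\langle \matr{M}_T, \matr{A}\rangle \ge 2\sqrt{k}\sqrt{t} + 2t) \le e^{-t}$, together with the matching lower-tail bound, for every $t > 0$.

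The crux is to make this hold \emph{uniformly} over all small $T$ and all net points $\matr{A}$ via a single union bound. The number of subsets of size at most $\rho n$ is $\sum_{k \le \rho n}\binom{n}{k} \le (e/\rho)^{\rho n}$, whose logarithm is $\rho n \log(e/\rho) = \Theta(\rho \log(1/\rho)\, n) = \Theta(\delta_2 n)$; this is precisely why the tolerance must scale as $\delta_2 = \Theta(\rho \log(1/\rho))$, since the subset-selection entropy is exactly what the concentration exponent has to overcome. Multiplying the per-event bound by the subset count $e^{\Theta(\delta_2 n)}$, the net size $e^{O(d^2)}$, and the $n$ choices of $k$, and choosing $t = \Theta(\delta_2 n)$, makes the per-event failure probability $e^{-\Theta(\delta_2 n)}$ dominate the combined entropy $\Theta(\delta_2 n) + O(d^2) + O(\log n)$ with room to spare once $\delta_2 n = \Omega(d^2 + \log(1/\tau))$, pushing the total failure probability below $\tau$; the stated sample size $n = \Omega((d^2 + \log(1/\tau))/\delta_2^2)$ comfortably guarantees this. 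With $t = \Theta(\delta_2 n)$ and $k \le \rho n$, the resulting deviation $2\sqrt{k}\sqrt{t} + 2t = O(\delta_2 n)$ (the cross term is smaller by a $\sqrt{\log(1/\rho)}$ factor), giving $\norm{\matr{M}_T}_F = O(\delta_2 n)$ uniformly, hence the claim.

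The main obstacle I anticipate is the uniformity over the exponentially many subsets $T$: a naive union bound over $\binom{n}{\rho n}$ sets is affordable only because its log-cardinality is exactly $\Theta(\delta_2 n)$, matching the target deviation, so the whole argument hinges on choosing the concentration parameter $t$ large enough to beat the subset entropy while still producing a deviation of order $\delta_2 n$. Balancing both the linear $2t$ and the $2\sqrt{kt}$ terms of Theorem~\ref{thm:MassartConcentrationLemma} against this entropy is the delicate part, whereas the net over test matrices and the reduction to chi-square weights are routine.
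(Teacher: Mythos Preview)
The paper does not contain its own proof of this statement: it is quoted verbatim as Corollary~4.12 of~\cite{DKK+16} and used as a black box (in Lemma~\ref{lem:conditional mean closeness} and Corollary~\ref{corollary:initialization}). So there is no in-paper proof to compare against.

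That said, your sketch is a correct and essentially standard route to this kind of uniform second-moment concentration, and it is in the spirit of the argument in~\cite{DKK+16}. The reduction $\langle \matr{M}_T, \matr{A}\rangle = \sum_{i\in T}\sum_j \lambda_j(Z_{ij}^2-1)$ cleanly feeds into Theorem~\ref{thm:MassartConcentrationLemma}, and your accounting is right on the key point: the log-cardinality of subsets of size $\le \rho n$ is $\Theta(\rho\log(1/\rho)\,n)=\Theta(\delta_2 n)$, so choosing $t=\Theta(\delta_2 n)$ simultaneously (i) kills the subset entropy, (ii) is dominated by the linear term $2t=O(\delta_2 n)$ in the Laurent--Massart deviation (the cross term $2\sqrt{kt}$ is indeed smaller by $\sqrt{\log(1/\rho)}$), and (iii) absorbs the $e^{O(d^2)}$ net cost once $\delta_2 n \ge \delta_2^2 n = \Omega(d^2+\log(1/\tau))$, which the stated sample size guarantees. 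Two small points worth tightening in a full write-up: the net lives on the unit Frobenius sphere of $\symm_d$, which has dimension $d(d+1)/2$, so the net size is $(C/\epsilon)^{d(d+1)/2}=e^{O(d^2)}$ rather than $(3/\epsilon)^{d^2}$ (immaterial for the bound); and you should make explicit the standard step that the sup over the sphere is at most $(1-\epsilon)^{-1}$ times the max over the net, since $\abs{\langle \matr{M}_T,\matr{A}-\matr{A}'\rangle}\le \epsilon\norm{\matr{M}_T}_F$.
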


  Using the well known fact that the squared Frobenius norm of a symmetric
matrix is equal to the sum of squares of its eigenvalues, we can obtain a bound
on the $\ell_2$ distance of the vector with entries the eigenvalues of
$\sum_{i \in T} \frac{1}{\abs{T}}\vec{x}^{(i)} \vec{x}^{(i) T}$ to the vector
with $1$ to every entry.

  \section{Stochastic Gradient Descent for Learning Truncated Normals} \label{sec:upper bound}

  In this section, we present and analyze our algorithm for estimating the true
mean and covariance matrix of the normal distribution from which the truncated
samples are drawn. Our algorithm is Projected Stochastic Gradient Descent
(PSGD) on the negative log-likelihood function. The steps of our proof are the
following
\begin{enumerate}
  \item First in Section \ref{sec:strong-convexity} we show that for the set of
    parameters $(\vec{\mu}, \matr{\Sigma})$ for which the set $S$ has at least
    a constant mass the negative log-likelihood function is strongly convex
    with respect to the correct parametrization.
  \item Then in Section \ref{sec:initialization} we show how we can find some
    initial estimates $(\vec{\mu}_0, \matr{\Sigma}_0)$ such that the probability
    mass $\normal(\vec{\mu}_0, \matr{\Sigma}_0)$ is a constant.
  \item In Section \ref{sec:box-parameters} we show how based on
    $(\vec{\mu}_0, \matr{\Sigma}_0)$ we can find a set of parameters $D$ such
    that for every $(\vec{\mu}, \matr{\Sigma}) \in D$ it holds that the
    probability mass $\normal(\vec{\mu}, \matr{\Sigma})$ is at least a constant.
  \item In Section \ref{sec:sgd-final} we put everything together to prove that
    the Projected Stochastic Gradient Descent algorithm with projection set $D$
    converges fast to the true estimates which completes the proof of Theorem
    \ref{thm:main theorem}.
\end{enumerate}

\subsection{Strong-convexity of the Negative Log-Likelihood Function} \label{sec:strong-convexity}

  Let $S \subseteq \reals^d$ be a subset of the $d$-dimensional Euclidean space.
We assume that we have access to $n$ samples $\vec{x}_i$ from
$\normal(\vec{\mu}^*, \vec{\Sigma}^*, S)$. We start by showing that the negative
log-likelihood of the truncated samples is strongly convex as long as we
re-parameterize our problem in terms of $\vec{v}=\matr{\Sigma}^{-1}\vec{\mu}$
and $\matr{T}=\matr{\Sigma}^{-1}$.

\subsubsection{Log-Likelihood for a Single Sample}
\label{sec:logLikelihoodOneSample}

Given one vector $\vec{x} \in \reals^d$, the negative log-likelihood that
$\vec{x}$ is a sample of the form $\normal(\vec{\mu}, \vec{\Sigma}, S)$ is
\begin{align} \label{eq:logLikelihoodOneSample}
  \ell(\vec{\mu}, \matr{\Sigma}; \vec{x}) = & \frac{1}{2} (\vec{x} -
  \vec{\mu})^T \matr{\Sigma}^{-1} (\vec{x} - \vec{\mu}) +
  \log \left( \int_S \exp \left( - \frac{1}{2} (\vec{z} - \vec{\mu})^T
  \matr{\Sigma}^{-1} (\vec{z} - \vec{\mu}) \right) d\vec{z} \right)
  \nonumber \\
  = & \frac{1}{2} \vec{x}^T \matr{\Sigma}^{-1} \vec{x} -
    \vec{x}^T \matr{\Sigma}^{-1} \vec{\mu}
    + \log \left( \int_S \exp \left(
    - \frac{1}{2} \vec{z}^T \matr{\Sigma}^{-1} \vec{z} +
    \vec{z}^T \matr{\Sigma}^{-1} \vec{\mu} \right) d\vec{z} \right)
\end{align}

\noindent Here we will define a different parametrization of the problem with
respect to the variables $\matr{T} = \matr{\Sigma}^{-1}$ and
$\vec{\nu} = \matr{\Sigma}^{-1} \vec{\mu}$, where $\matr{T} \in
\reals^{d \times d}$ and $\vec{\nu} \in \reals^d$. Then the
likelihood function with respect to $\vec{\nu}$ and $\matr{T}$ is equal to
\begin{align} \label{eq:logLikelihoodOneSampleReparam}
  \ell(\vec{\nu}, \matr{T}; \vec{x})
  = & \frac{1}{2} \vec{x}^T \matr{T} \vec{x} - \vec{x}^T \vec{\nu}
      + \log \left( \int_S \exp \left( - \frac{1}{2} \vec{z}^T \matr{T} \vec{z}
      + \vec{z}^T \vec{\nu} \right) d\vec{z} \right)
\end{align}

\noindent We now compute the gradient of $\ell(\vec{\nu}, \matr{T}; \vec{x})$
with respect to the set of variables
$\begin{bmatrix} \matr{T}^{\flat} \\ \vec{\nu} \end{bmatrix}$.

\begin{align} \label{eq:gradientofLikelihoodOneSample}
  \nabla \ell(\vec{\nu}, \matr{T}; \vec{x}) = & - \begin{bmatrix}
                    \left(- \frac{1}{2} \vec{x} \vec{x}^T \right)^{\flat} \\
                    \vec{x}
                  \end{bmatrix}
                  + \frac{\int_S \begin{bmatrix}
                                    \left( - \frac{1}{2} \vec{z}
                                      \vec{z}^T \right)^{\flat} \\
                                    \vec{z}
                                  \end{bmatrix}
                                  \exp \left( - \frac{1}{2} \vec{z}^T \matr{T}
                  \vec{z} + \vec{z}^T \vec{\nu} \right) d\vec{z}}
                  {\int_S \exp \left( - \frac{1}{2} \vec{z}^T \matr{T}
                  \vec{z} + \vec{z}^T \vec{\nu} \right) d\vec{z}}
                  \nonumber \\
              = & - \begin{bmatrix}
                    \left(- \frac{1}{2} \vec{x} \vec{x}^T \right)^{\flat} \\
                    \vec{x}
                  \end{bmatrix}
                  + \frac{\int_S \begin{bmatrix}
                                    \left( - \frac{1}{2} \vec{z}
                                      \vec{z}^T \right)^{\flat} \\
                                    \vec{z}
                                  \end{bmatrix}
                        \exp \left( - \frac{1}{2} \vec{z}^T \matr{T}
                          \vec{z} + \vec{z}^T \vec{\nu} -
                    \norm{\matr{T}^{-1} \vec{\nu}}_2^2 \right) d\vec{z}}
                  {\int_S \exp \left( - \frac{1}{2} \vec{z}^T \matr{T}
                  \vec{z} + \vec{z}^T \matr{B} \vec{x} -
                    \norm{\matr{T}^{-1} \vec{\nu}}_2^2\right) d\vec{z}}
                  \nonumber \\
              = & - \begin{bmatrix}
                    \left(- \frac{1}{2} \vec{x} \vec{x}^T \right)^{\flat} \\
                    \vec{x}
                  \end{bmatrix}
                  + \Exp_{\vec{z} \sim \normal(\matr{T}^{-1} \vec{\nu},
                      \matr{T}^{-1}, S)}
                  \left[
                    \begin{bmatrix}
                      \left( - \frac{1}{2} \vec{z}
                       \vec{z}^T \right)^{\flat} \\
                      \vec{z}
                    \end{bmatrix}
                  \right]
\end{align}

\noindent Finally, we compute the Hessian $\Hessian_{\ell}$ of the
log-likelihood function.

\begin{align} \label{eq:HessianofLikelihoodOneSample}
  \Hessian_{\ell}(\vec{\nu}, \matr{T})
              & = \frac{\int_S \begin{bmatrix}
                                   \left( - \frac{1}{2} \vec{z}
                                     \vec{z}^T \right)^{\flat} \\
                                   \vec{z}
                                 \end{bmatrix}
                                 \begin{bmatrix}
                                   \left( - \frac{1}{2} \vec{z}
                                    \vec{z}^T \right)^{\flat} \\
                                   \vec{z}
                                 \end{bmatrix}^T
                                  \exp \left( - \frac{1}{2} \vec{z}^T \matr{T}
                  \vec{z} + \vec{z}^T \vec{\nu} \right) d\vec{z}}
                  {\int_S \exp \left( - \frac{1}{2} \vec{z}^T \matr{T}
                  \vec{z} + \vec{z}^T \vec{\nu} \right) d\vec{z}}
                  \nonumber \\
                & ~~~~ - \frac{\int_S \begin{bmatrix}
                                     \left( - \frac{1}{2} \vec{z}
                                       \vec{z}^T \right)^{\flat} \\
                                     \vec{z}
                                   \end{bmatrix}
                                  \exp \left( - \frac{1}{2} \vec{z}^T \matr{T}
                  \vec{z} + \vec{z}^T \vec{\nu} \right) d\vec{z}}
                  {\int_S \exp \left( - \frac{1}{2} \vec{z}^T \matr{T}
                  \vec{z} + \vec{z}^T \vec{\nu} \right) d\vec{z}} \cdot
                  \nonumber \\
                & ~~~~~~~~ \cdot \frac{\int_S \begin{bmatrix}
                                     \left( - \frac{1}{2} \vec{z}
                                       \vec{z}^T \right)^{\flat} \\
                                     \vec{z}
                                   \end{bmatrix}^T
                                  \exp \left( - \frac{1}{2} \vec{z}^T \matr{T}
                    \vec{z} + \vec{z}^T \vec{\nu} \right) d\vec{z}}
                    {\int_S \exp \left( - \frac{1}{2} \vec{z}^T \matr{T}
                    \vec{z} + \vec{z}^T \vec{\nu} \right) d\vec{z}}
                    \nonumber \\
              & = \Cov_{\vec{z} \sim \normal(\matr{T}^{-1} \vec{\nu},
                     \matr{T}^{-1}, S)} \left[
                     \begin{bmatrix}
                       \left( - \frac{1}{2} \vec{z}
                        \vec{z}^T \right)^{\flat} \\
                       \vec{z}
                     \end{bmatrix},
                     \begin{bmatrix}
                       \left( - \frac{1}{2} \vec{z}
                         \vec{z}^T \right)^{\flat} \\
                       \vec{z}
                     \end{bmatrix} \right].
\end{align}

Since the covariance matrix of a random variable is always positive
semidefinite, we conclude that $\Hessian_{\ell}(\vec{\nu}, \matr{T})$ is
positive semidefinite everywhere and hence, we have the following lemma.

\begin{lemma} \label{lem:oneSampleLikelihoodIsConcave}\label{lemma:reparameterized convex}
    The function $\ell(\vec{\nu}, \matr{T}^{-1}; \vec{x})$ is convex with
  respect to $\begin{bmatrix} \matr{T}^{\flat} \\ \vec{\nu} \end{bmatrix}$ for
  all $\vec{x} \in \reals^d$.
\end{lemma}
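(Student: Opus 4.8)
The plan is to establish convexity through the second-order condition: I will show that the Hessian $\Hessian_{\ell}(\vec{\nu},\matr{T})$ with respect to $\theta := \begin{pmatrix}\matr{T}^{\flat}\\ \vec{\nu}\end{pmatrix}$ is positive semidefinite at every point of the (convex) domain on which $\ell$ is defined, in particular throughout the region $\matr{T}\succ 0$ of interest, where the integral $\int_S \exp(-\tfrac12 \vec{z}^T\matr{T}\vec{z}+\vec{z}^T\vec{\nu})\,d\vec{z}$ converges. Since convexity is equivalent to $\Hessian_{\ell}\succeq 0$ on a convex set, the whole task reduces to computing this Hessian in closed form.

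The structural observation that drives the proof is that $\ell$ is, up to an affine term, the log-partition function of an exponential family. Writing $\vec{t}(\vec{z}) := \begin{pmatrix}(-\tfrac12 \vec{z}\vec{z}^T)^{\flat}\\ \vec{z}\end{pmatrix}$ for the sufficient statistic, one checks that $-\tfrac12\vec{z}^T\matr{T}\vec{z}+\vec{z}^T\vec{\nu}=\langle \theta,\vec{t}(\vec{z})\rangle$, so that
\[
\ell(\vec{\nu},\matr{T};\vec{x}) = -\langle \theta,\vec{t}(\vec{x})\rangle + A(\theta),\qquad A(\theta):=\log\!\int_S \exp\!\big(\langle \theta,\vec{t}(\vec{z})\rangle\big)\,d\vec{z}.
\]
The first term is linear in $\theta$ and therefore contributes nothing to the Hessian, so it suffices to prove that the log-partition (cumulant generating) function $A$ is convex, which is the classical fact that such functions are convex in their natural parameters.

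To make this concrete I would differentiate $A$ twice, passing the derivative inside the integral. The first derivative gives $\nabla A(\theta) = \Exp[\vec{t}(\vec{z})]$, where $\vec{z}$ is drawn from the density proportional to $\exp(\langle \theta,\vec{t}(\vec{z})\rangle)$ on $S$, which is exactly the truncated Gaussian $\normal(\matr{T}^{-1}\vec{\nu},\matr{T}^{-1},S)$; this reproduces the gradient formula already displayed. Differentiating a second time, the quotient rule yields the second moment of $\vec{t}(\vec{z})$ minus the outer product of its first moment, i.e.
\[
\Hessian_{\ell}(\vec{\nu},\matr{T}) = \nabla^2 A(\theta) = \Cov_{\vec{z}\sim\normal(\matr{T}^{-1}\vec{\nu},\matr{T}^{-1},S)}\!\big[\vec{t}(\vec{z})\big].
\]
Since any covariance matrix is positive semidefinite, $\Hessian_{\ell}\succeq 0$ everywhere on the domain, and convexity follows.

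The only genuine technical point, rather than a real obstacle, is justifying the interchange of differentiation and integration; this follows from dominated convergence once we remain in the region $\matr{T}\succ 0$, where the integrand decays like a Gaussian uniformly in a neighborhood of $\theta$, so that all the relevant moment integrals are finite and the differentiated integrands are dominated. I would finally remark that working on this region is harmless for the later optimization: the natural parameter domain is convex and contains the ground-truth parameters, so convexity there is precisely what is needed.
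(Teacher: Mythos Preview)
Your proposal is correct and follows essentially the same route as the paper: compute the Hessian of $\ell$ with respect to $(\matr{T}^{\flat},\vec{\nu})$, recognize it as the covariance matrix of the sufficient statistic $\vec{t}(\vec{z})=\begin{pmatrix}(-\tfrac12\vec{z}\vec{z}^T)^{\flat}\\ \vec{z}\end{pmatrix}$ under the truncated Gaussian, and conclude positive semidefiniteness. The only differences are cosmetic---you frame the computation through the exponential-family log-partition function and explicitly note the dominated-convergence justification for differentiating under the integral, whereas the paper simply carries out the differentiation and states the covariance identity directly.
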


\subsubsection{Negative Log-Likelihood Function in the Population Model} \label{sec:logLikelihoodPopulation}

  The negative log-likelihood function in the population model is equal to

\begin{align} \label{eq:logLikelihoodPopulation}
  \bar{\ell}(\vec{\nu}, \matr{T}) = & ~
      \Exp_{\vec{x} \sim \normal(\vec{\mu^*}, \matr{\Sigma^*}, S)}
      \left[ \frac{1}{2} \vec{x}^T \matr{T} \vec{x} - \vec{x}^T \vec{\nu}
      \right]
      - \log \left( \int_S \exp \left( - \frac{1}{2} \vec{z}^T \matr{T} \vec{z}
          + \vec{z}^T \vec{\nu} \right) d\vec{z} \right)
\end{align}

\noindent Also, using \eqref{eq:gradientofLikelihoodOneSample} we have that

\begin{align} \label{eq:gradientofLikelihoodPopulation}
  \nabla \bar{\ell}(\vec{\nu}, \matr{T}) = & ~
    - \Exp_{\vec{x} \sim \normal(\vec{\mu^*}, \matr{\Sigma^*}, S)}
    \left[
      \begin{bmatrix}
        \left( - \frac{1}{2} \vec{x} \vec{x}^T \right)^{\flat} \\
        \vec{x}
      \end{bmatrix}
    \right] +
    \Exp_{\vec{z} \sim \normal(\matr{T}^{-1} \vec{\nu}, \matr{T}^{-1}, S)}
    \left[
      \begin{bmatrix}
        \left( - \frac{1}{2} \vec{z} \vec{z}^T \right)^{\flat} \\
        \vec{z}
      \end{bmatrix}
    \right].
\end{align}

\noindent Hence from Lemma \ref{lem:oneSampleLikelihoodIsConcave} we have that
$\bar{\ell}$ is a convex function with respect to $\vec{\nu}$ and $\matr{T}$.
Also, from \eqref{eq:gradientofLikelihoodPopulation} we get that the gradient
$\nabla \bar{\ell}(\vec{\nu}, \matr{T})$ is $\vec{0}$, when
$\vec{\mu^*} = \matr{T}^{-1} \vec{\nu}$ and $\matr{\Sigma^*} = \matr{T}^{-1}$.
From this observation together with the convexity of $\bar{\ell}$ we conclude
that the true parameters, $\vec{\nu}^* = \matr{\Sigma}^{* -1} \vec{\mu}^*$
and $\matr{T}^* = \matr{\Sigma}^{* -1}$ maximize the log-likelihood function.

\begin{lemma} \label{lem:optimalityofMLEPopulation}
    Let $\vec{\nu}^* = \matr{\Sigma}^{* -1} \vec{\mu}^*$,
  $\matr{T}^* = \matr{\Sigma}^{* -1}$, then for any $\vec{\nu} \in \reals^d$,
  $\matr{T} \in \reals^{d \times d}$ it holds that
  \[ \bar{\ell}(\vec{\nu}^*, \matr{T}^*) \le \bar{\ell}(\vec{\nu}, \matr{T}). \]
\end{lemma}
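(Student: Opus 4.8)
The plan is to combine the two properties of $\bar{\ell}$ that are essentially already assembled in the text preceding the statement: its convexity and the vanishing of its gradient at the true parameters. First I would record convexity. By Lemma~\ref{lem:oneSampleLikelihoodIsConcave}, for every fixed $\vec{x} \in \reals^d$ the single-sample negative log-likelihood $\ell(\vec{\nu}, \matr{T}; \vec{x})$ is convex in $\begin{pmatrix} \matr{T}^{\flat} \\ \vec{\nu} \end{pmatrix}$. Since $\bar{\ell}(\vec{\nu}, \matr{T})$ is, by \eqref{eq:logLikelihoodPopulation}, exactly the expectation of these functions over $\vec{x} \sim \normal(\vec{\mu}^*, \matr{\Sigma}^*, S)$, and convexity is preserved under averaging (a nonnegative combination of convex functions is convex), $\bar{\ell}$ is convex in $\begin{pmatrix} \matr{T}^{\flat} \\ \vec{\nu} \end{pmatrix}$.

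Second, I would evaluate the gradient \eqref{eq:gradientofLikelihoodPopulation} at $(\vec{\nu}^*, \matr{T}^*)$. The key observation is the bookkeeping $\matr{T}^{*-1} = \matr{\Sigma}^*$ and $\matr{T}^{*-1}\vec{\nu}^* = \matr{\Sigma}^*\bigl(\matr{\Sigma}^{*-1}\vec{\mu}^*\bigr) = \vec{\mu}^*$. Hence the second expectation in \eqref{eq:gradientofLikelihoodPopulation}, which is taken with respect to $\normal(\matr{T}^{*-1}\vec{\nu}^*, \matr{T}^{*-1}, S)$, is in fact taken with respect to precisely the same truncated normal $\normal(\vec{\mu}^*, \matr{\Sigma}^*, S)$ appearing in the first expectation. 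The two vector-valued expectations are therefore identical and cancel, yielding $\nabla \bar{\ell}(\vec{\nu}^*, \matr{T}^*) = \vec{0}$.

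Finally, I would invoke the first-order optimality condition for convex functions. For a convex differentiable $\bar{\ell}$ the inequality $\bar{\ell}(\vec{\nu}, \matr{T}) \ge \bar{\ell}(\vec{\nu}^*, \matr{T}^*) + \bigl\langle \nabla \bar{\ell}(\vec{\nu}^*, \matr{T}^*),\, \begin{pmatrix} (\matr{T} - \matr{T}^*)^{\flat} \\ \vec{\nu} - \vec{\nu}^* \end{pmatrix} \bigr\rangle$ holds for every $(\vec{\nu}, \matr{T})$. Since the gradient vanishes at $(\vec{\nu}^*, \matr{T}^*)$, the inner-product term drops out and we are left with $\bar{\ell}(\vec{\nu}^*, \matr{T}^*) \le \bar{\ell}(\vec{\nu}, \matr{T})$, which is exactly the claim that the true parameters minimize the population negative log-likelihood.

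The argument is short, and the only point requiring genuine care is the justification that differentiation may be carried out under the integral/expectation sign, so that the closed form \eqref{eq:gradientofLikelihoodPopulation} is legitimate and $\bar{\ell}$ is finite and differentiable on the relevant domain; this is where the standing non-degeneracy assumptions are used — the covariance $\matr{\Sigma}^*$ is full rank and $S$ has non-trivial measure, so the normalizing integral $\int_S \exp(-\tfrac12 \vec{z}^T \matr{T} \vec{z} + \vec{z}^T \vec{\nu})\,d\vec{z}$ is strictly positive and finite and an integrable dominating function exists. Granting this (via dominated convergence), the remainder is the standard fact that a critical point of a convex function is a global minimizer.
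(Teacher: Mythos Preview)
Your proof is correct and follows exactly the paper's own argument: convexity of $\bar{\ell}$ inherited from Lemma~\ref{lem:oneSampleLikelihoodIsConcave}, combined with the observation from \eqref{eq:gradientofLikelihoodPopulation} that the gradient vanishes at $(\vec{\nu}^*,\matr{T}^*)$, so this critical point is a global minimizer. The only addition is your explicit invocation of the first-order optimality inequality and the remark about differentiating under the integral, both of which are routine elaborations rather than a different approach.
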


\noindent Observe also that the Hessian of $\ell$ is the same as the Hessian
of $\bar{\ell}$. One property of the reparameterized negative log-likelihood
function that is important for our proof is its \textit{strong convexity}.

\begin{definition}[Strong Convexity]
    Let $g : \reals^d \to \reals$, and let $\Hessian_{g}$ be the Hessian of $g$.
  We say that $g$ is $\lambda$-strongly convex if
  $\Hessian_{g}(\vec{x}) \succeq \lambda \matr{I}$ for all
  $\vec{x} \in \reals^d$.
\end{definition}

\noindent Our goal it to prove that $\bar{\ell}(\vec{\nu}, \matr{T})$ is
strongly convex for parameters $\vec{\nu}$ and $\matr{T}$ such that the
probability mass $\normal(\matr{T}^{-1} \vec{\nu}, \matr{T}^{-1}; S)$ is at
least a constant. Then in Section \ref{sec:box-parameters} we prove it is
possible to efficiently find a set of parameters $D$ such that this conditions
holds and also $D$ contains the true parameters. The main idea of the proof is
to use PSGD with projection set $D$ to recover the parameters $\vec{\mu}^*$ and
$\matr{\Sigma}^*$.

We first prove strong concavity for the case $S = \reals^d$. For
this, we need some definitions.

\begin{definition}  \label{def:minimumEigenvalueWithoutTruncation}
    Let $\matr{\Sigma} \in \reals^{d \times d}$ and also
  $\matr{\Sigma} \succ \matr{0}$ with eigenvalues
  $\lambda_1, \dots, \lambda_n$, then
  we define the minimum eigenvalue $\sigma_m(\matr{\Sigma})$ of the fourth
  moment tensor of $\normal(\vec{0}, \matr{\Sigma})$ as
  \[ \sigma_m(\matr{\Sigma}) = \min\left\{ \min_{i, j \in [d]} \lambda_i
        \cdot \lambda_j, \min_{i \in [d]} \lambda_i \right\}. \]
  We also define the quantity $\lambda_m(\vec{\mu}, \matr{\Sigma})$ as follows
  \[ \lambda_m(\vec{\mu}, \matr{\Sigma}) = \min \left\{ \frac{\sigma_m(\matr{\Sigma})}{4}, \frac{\sigma_m(\matr{\Sigma})}{16 \norm{\vec{\mu}}_2^2 + \sqrt{\sigma_m(\matr{\Sigma})}} \right\}. \]
\end{definition}

\begin{lemma}[Strong Convexity without Truncation] \label{lem:strongConcavityWithoutTruncation}
    Let $\Hessian_{\ell}$ be the Hessian of the negative log likelihood
  function $\bar{\ell}(\vec{\nu}, \matr{T})$, when there is no truncation, i.e.
  $S = \reals^d$. If $\matr{T}^{-1} \succ \matr{0}$ then it holds that
  \[ \Hessian_{\ell}(\vec{\nu}, \matr{T}) \succeq \lambda_m(\matr{T}^{-1} \vec{\nu}, \matr{T}^{-1}) \cdot \matr{I}. \]
\end{lemma}

\begin{proof}
    Let $\vec{\mu} = \matr{T}^{-1} \vec{\nu}$ and
  $\matr{\Sigma} = \matr{T}^{-1}$, we have that
  \[ \Hessian_{\ell}(\vec{\nu}, \matr{T}) = \Cov_{\vec{z} \sim \normal(\vec{\mu},
                 \matr{\Sigma})} \left[
                 \begin{bmatrix}
                   \left( - \frac{1}{2} \vec{z}
                    \vec{z}^T \right)^{\flat} \\
                   \vec{z}
                 \end{bmatrix},
                 \begin{bmatrix}
                   \left( - \frac{1}{2} \vec{z}
                     \vec{z}^T \right)^{\flat} \\
                   \vec{z}
                 \end{bmatrix} \right]. \]

   \noindent Next we define the matrix
   \[ \tilde{Q}(\vec{\mu}, \matr{\Sigma}) \triangleq \frac{1}{2} \matr{\Sigma} \otimes \matr{\Sigma} - \frac{\rho}{4} \cdot (\vec{\mu} \vec{\mu}^T) \otimes \matr{\Sigma} - \frac{\rho}{4} \cdot \vec{\mu} \otimes \matr{\Sigma} \otimes \vec{\mu}^T - \frac{\rho}{4} \cdot \vec{\mu}^T \otimes \matr{\Sigma} \otimes \vec{\mu} - \frac{\rho}{4} \cdot \matr{\Sigma} \otimes (\vec{\mu} \vec{\mu}^T) \]
   \noindent where
   $\rho \triangleq \frac{\sqrt{\sigma_m(\matr{\Sigma})}}{4 \norm{\vec{\mu}}_2^2} + 3$
   and we can prove that
   \begin{claim} \label{clm:covarianceWithoutTruncationDominance}
     It holds that
     \begin{align*}
     \Cov_{\vec{z} \sim \normal(\vec{\mu},
                    \matr{\Sigma})} \left[
                    \begin{bmatrix}
                      \left( - \frac{1}{2} \vec{z}
                       \vec{z}^T \right)^{\flat} \\
                      \vec{z}
                    \end{bmatrix},
                    \begin{bmatrix}
                      \left( - \frac{1}{2} \vec{z}
                        \vec{z}^T \right)^{\flat} \\
                      \vec{z}
                    \end{bmatrix} \right] \succeq
                    \begin{bmatrix}
                      \tilde{Q}(\vec{\mu}, \matr{\Sigma}) & \matr{0} \\
                      \matr{0} & \frac{\rho - 3}{1 + \rho} \cdot \matr{\Sigma}
                    \end{bmatrix}.
     \end{align*}
   \end{claim}

   Hence the eigenvalues of the Hessian $\Hessian_{\ell}$ when $S = \reals^d$
   can be lower bounded by the eigenvalues of
   $\tilde{Q}(\vec{\mu}, \matr{\Sigma})$ and the eigenvalues of
   $\frac{\rho - 3}{1 + \rho} \matr{\Sigma}$. For this reason we need the
   following claim
   \begin{claim} \label{clm:covarianceWithoutTruncationEigenvalues}
     It holds that
     $\tilde{Q}(\vec{\mu}, \matr{\Sigma}) \succeq \frac{\sigma_m(\matr{\Sigma})}{4} \cdot \matr{I}_{d^2}$ and $\frac{\rho - 3}{1 + \rho} \cdot \matr{\Sigma} \succeq \frac{\sigma_m(\matr{\Sigma})}{16 \norm{\vec{\mu}}_2^2 + \sqrt{\sigma_m(\matr{\Sigma})}} \cdot \matr{I}_d$.
   \end{claim}

   \noindent If we combine Claim \ref{clm:covarianceWithoutTruncationDominance}
   and Claim \ref{clm:covarianceWithoutTruncationEigenvalues} then the lemma
   follows. We present the proof of these claims in the Appendix
   \ref{app:upper bound}.
\end{proof}

  To prove strong convexity in the presence of truncation, we use the
following anticoncentration bound of the Gaussian measure on sets characterized
by polynomial threshold functions.

\begin{theorem}[Theorem 8 of \cite{CarberyW01}] \label{thm:GaussianMeasurePolynomialThresholdFunctions}
    Let $q, \gamma \in \reals_+$, $\vec{\mu} \in \reals^d$,
  $\matr{\Sigma} \in \symm_d$ and $p : \reals^d \to \reals$ be a multivariate
  polynomial of degree at most $k$, we define
  \[ \bar{S} = \left\{ \vec{x} \in \reals^d \mid \abs{p(\vec{x})} \le \gamma
     \right\}, \]
  then there exists an absolute constant $C$ such that
  \[ \normal(\vec{\mu}, \matr{\Sigma}; \bar{S}) \le \frac{C q \gamma^{1/k}}
     {\left( \Exp_{\vec{z} \sim \normal(\vec{\mu}, \matr{\Sigma})}
      \left[ \abs{p(\vec{z})}^{q/k} \right] \right)^{1/q}}. \]
\end{theorem}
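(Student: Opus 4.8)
The plan is to prove the sublevel-set bound by first removing the affine degrees of freedom, and then splitting the argument into a moment-equivalence step and a core anticoncentration step. First I would reduce to the standard Gaussian: under the substitution $\vec{x} = \matr{\Sigma}^{1/2}\vec{y} + \vec{\mu}$, the measure $\normal(\vec{\mu},\matr{\Sigma})$ becomes $\normal(\vec{0},\matr{I})$ and $p$ becomes a polynomial $\tilde p(\vec{y}) = p(\matr{\Sigma}^{1/2}\vec{y}+\vec{\mu})$ of the same degree $k$. Both $\normal(\vec\mu,\matr\Sigma;\bar S)$ and the moment $\Exp_{\normal(\vec\mu,\matr\Sigma)}[|p|^{q/k}]$ are invariant under this change, so it suffices to treat $Z\sim\normal(\vec 0,\matr I)$. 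Writing $\norm{p}_s=(\Exp_Z[|p(Z)|^s])^{1/s}$ and using $(\Exp|p|^{q/k})^{1/q}=\norm{p}_{q/k}^{1/k}$, the claim is exactly the small-ball estimate
\[ \Prob[\,|p(Z)|\le\gamma\,]\le Cq\Big(\tfrac{\gamma}{\norm{p}_{q/k}}\Big)^{1/k}, \]
and by homogeneity ($p\mapsto p/\norm{p}_{q/k}$, $\gamma\mapsto\gamma/\norm{p}_{q/k}$) I may assume $\norm{p}_{q/k}=1$, reducing the goal to $\Prob[|p(Z)|\le\gamma]\le Cq\,\gamma^{1/k}$.

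The first pillar is a moment equivalence, i.e.\ Gaussian hypercontractivity for low-degree polynomials: every degree-$k$ polynomial of a standard Gaussian satisfies $\norm{p}_s\le A(s,s',k)\,\norm{p}_{s'}$ for all $s,s'\ge 1$, with $A$ explicit (growing like $(s-1)^{k/2}$ for $s\ge 2$ and a reverse bound for $s<2$). This lets me trade the awkward exponent $q/k$ in the denominator for the $L^2$ norm $\norm{p}_2$, at the cost of a factor I track carefully; it is precisely this bookkeeping that produces the constant's dependence on $q$ (linear, in the regime $q\ge k$ of interest). After this step the target becomes a clean $L^2$ sublevel estimate $\Prob[|p(Z)|\le\gamma]\lesssim k\,(\gamma/\norm{p}_2)^{1/k}$.

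The second pillar is this core anticoncentration bound, and it is where the real work lies. The one-dimensional case is the engine: if $P$ is univariate of degree $\le k$, then $\{\,|P|\le\gamma\,\}$ is a union of at most $k{+}1$ intervals, and a Remez-type inequality forces $P$ to grow like a Chebyshev polynomial just outside any interval on which it is $\le\gamma$; this bounds the Lebesgue measure of $\{\,|P|\le\gamma\,\}$ by $\lesssim (\gamma/\norm{P})^{1/k}$, which is the source of the $1/k$ exponent. Since the Gaussian is log-concave, such Lebesgue-measure estimates transfer to the weighted measure with only controlled loss. To pass from one dimension to $d$, I would slice: fix a direction, use that the conditional of $\normal(\vec0,\matr I)$ on each fiber is again a one-dimensional Gaussian, apply the 1D estimate along the fiber, and integrate over the transverse coordinates.

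The main obstacle is exactly this recombination. The 1D estimate involves the norm of $p$ restricted to each line, and these fiberwise norms must be aggregated back to the global norm $\norm{p}_2$ without losing more than a $\mathrm{poly}(k)$ factor; this requires choosing the slicing direction well (one along which $p$ genuinely varies, detected through a directional derivative or a leading-coefficient argument) and leaning on log-concavity of the Gaussian marginals. An equivalent way to package the same difficulty, which I might prefer for bookkeeping, is the negative-moment formulation: since $\Prob[|p(Z)|\le\gamma]\le\gamma^{1/k}\,\Exp_Z[\,|p(Z)|^{-1/k}\,]$ by Markov's inequality, the whole theorem reduces to the integrability bound $\Exp_Z[|p(Z)|^{-1/k}]\lesssim \norm{p}_2^{-1/k}$, and establishing this finite negative moment with the right dependence is the crux either way.
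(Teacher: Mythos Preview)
The paper does not prove this statement at all: it is quoted verbatim as Theorem~8 of Carbery--Wright and used as a black box in the proof of Lemma~\ref{lem:strongConcavityofLogLikelihoodWithTruncation}. So there is no ``paper's own proof'' to compare against; you have written a proof sketch for an imported result.

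That said, your outline is a faithful high-level summary of how Carbery and Wright actually proceed: reduce by an affine change of variables to the isotropic case, normalize, use Gaussian hypercontractivity to interchange $L^{q/k}$ and $L^2$ norms of degree-$k$ polynomials (this is where the linear $q$ factor appears), and then establish the core small-ball bound $\Prob[|p(Z)|\le\gamma]\lesssim k(\gamma/\norm{p}_2)^{1/k}$. Their argument for the last step is indeed a one-dimensional sublevel estimate (of Remez/Chebyshev type) lifted to higher dimensions via slicing, using log-concavity of the measure; your negative-moment reformulation $\Exp[|p(Z)|^{-1/k}]\lesssim\norm{p}_2^{-1/k}$ is equivalent and is in fact closer to how they state their main theorem. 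The one place your sketch is vague is exactly the place that carries the content in their paper: controlling the fiberwise norms when integrating the 1D bound back up. Carbery--Wright handle this not by a direct ``good direction'' argument but through an induction/Brascamp--Lieb style inequality for log-concave measures; if you were to fill in the details, that is the step requiring real care.
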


  Using the above anticoncentration theorem we can utilize Lemma
\ref{lem:strongConcavityWithoutTruncation} and prove that in the presence of
truncation, for which at least a constant fraction of the data survives, the
eigenvalues of the Hessian cannot be much lower than the eigenvalues in the
non-truncated case.

\begin{lemma}[Strong Convexity with Truncation]
\label{lem:strongConcavityofLogLikelihoodWithTruncation}\label{lemma:strong convexity}
    Let $\Hessian_{\ell}$ be the Hessian of the negative log likelihood
  function $\bar{\ell}(\vec{\nu}, \matr{T})$, with the presence of arbitrary
  truncation $S$ such that $\normal(\vec{\mu}, \matr{\Sigma} ; S) \ge \beta$
  for some $\beta \in (0, 1]$, where $\vec{\mu} = \matr{T}^{-1} \vec{\nu}$, and
  $\matr{\Sigma} = \matr{T}^{-1}$. Then it holds that
  \[ \Hessian_{\ell}(\vec{\nu}, \matr{T}) \succeq \frac{1}{2^{13}} \left( \frac{\beta}{C}
  \right)^4 \lambda_m(\vec{\mu}, \matr{\Sigma})
  \cdot \matr{I}, \]
  \noindent where $C$ is the universal constant guaranteed to exist by Theorem
  \ref{thm:GaussianMeasurePolynomialThresholdFunctions}.
\end{lemma}

\begin{proof}
    We first define the vector valued function
  \[\vec{v}(\vec{z}) = \begin{bmatrix}
    \left( - \frac{1}{2} \vec{z}
     \vec{z}^T \right)^{\flat} \\
    \vec{z}
  \end{bmatrix}.\]
  Next we define the matrices $\matr{R}$, $\matr{R}'$ and
   $\matr{R}^*$ as follows
  \begin{align*}
    \matr{R} = & ~ \Cov_{\vec{z} \sim \normal(\vec{\mu}, \matr{\Sigma}, S)} \left[
      \vec{v}(\vec{z}), \vec{v}(\vec{z})
    \right] \\
  = & ~ \Exp_{\vec{z} \sim \normal(\vec{\mu}, \matr{\Sigma}, S)} \left[
       \left(
        \vec{v}(\vec{z})
       - \Exp_{\vec{z} \sim \normal(\vec{\mu}, \matr{\Sigma}, S)} \left[
           \vec{v}(\vec{z})
         \right]
       \right) \cdot \left(
       \vec{v}(\vec{z}) -
       \Exp_{\vec{z} \sim \normal(\vec{\mu}, \matr{\Sigma}, S)} \left[
           \vec{v}(\vec{z})
         \right]
       \right)^T \right], \\
  \matr{R}' = & ~ \Exp_{\vec{z} \sim \normal(\vec{\mu}, \matr{\Sigma})} \left[
     \left(
      \vec{v}(\vec{z})
     - \Exp_{\vec{z} \sim \normal(\vec{\mu}, \matr{\Sigma}, S)} \left[
         \vec{v}(\vec{z})
       \right]
     \right) \cdot \left(
       \vec{v}(\vec{z}) -
       \Exp_{\vec{z} \sim \normal(\vec{\mu}, \matr{\Sigma}, S)} \left[
         \vec{v}(\vec{z})
       \right]
     \right)^T \right], \\
    \matr{R}^* = & ~ \Exp_{\vec{z} \sim \normal(\vec{\mu}, \matr{\Sigma})}  \left[
     \left(
      \vec{v}(\vec{z})
     - \Exp_{\vec{z} \sim \normal(\vec{\mu}, \matr{\Sigma})} \left[
         \vec{v}(\vec{z})
       \right]
     \right) \cdot \left(
     \vec{v}(\vec{z}) -
     \Exp_{\vec{z} \sim \normal(\vec{\mu}, \matr{\Sigma})} \left[
         \vec{v}(\vec{z})
       \right]
     \right)^T \right].
  \end{align*}
  \noindent Our goal is to lower bound the eigenvalues of $\matr{R}$ based on
  the eigenvalues of $\matr{R}^*$ which we can lower bound using Lemma
  \ref{lem:strongConcavityWithoutTruncation}. To do this we use $\matr{R}'$ as
  an intermediate step and then we use Theorem
  \ref{thm:GaussianMeasurePolynomialThresholdFunctions} to relate the
  eigenvalues of $\matr{R}$ with the eigenvalues of $\matr{R}'$. So the first
  step is to relate the eigenvalues of $\matr{R}'$ with the eigenvalues of
  $\matr{R}^*$ in the following claim.

  \begin{claim} \label{clm:eigenvaluesofRprimevsRstar}
    It holds that $\matr{R}' \succeq \matr{R}^*$.
  \end{claim}

  \noindent Now let $\vec{v} \in \reals^d$ and
  $\matr{U} \in \reals^{d \times d}$ with
  $\norm{\vec{v}}_2^2 + \norm{\matr{U}}_F^2 = 1$. We have that
  \begin{align*}
    \begin{bmatrix} \left(\matr{U}^{\flat}\right)^T & \vec{v}^T \end{bmatrix}
      \matr{R}
    \begin{bmatrix} \matr{U}^{\flat} \\ \vec{v} \end{bmatrix}
      =
    \Exp_{\vec{z} \sim \normal(\vec{\mu}, \matr{\Sigma}, S)} \left[
      p_{(\matr{U}, \vec{v})}(\vec{z})
    \right]
  \end{align*}
  \begin{align*}
    \begin{bmatrix} \left(\matr{U}^{\flat}\right)^T & \vec{v}^T \end{bmatrix}
      \matr{R}'
    \begin{bmatrix} \matr{U}^{\flat} \\ \vec{v} \end{bmatrix}
      =
    \Exp_{\vec{z} \sim \normal(\vec{\mu}, \matr{\Sigma})} \left[
      p_{(\matr{U}, \vec{v})}(\vec{z})
    \right]
  \end{align*}
  \begin{align*}
    \begin{bmatrix} \left(\matr{U}^{\flat}\right)^T & \vec{v}^T \end{bmatrix}
      \matr{R}^*
    \begin{bmatrix} \matr{U}^{\flat} \\ \vec{v} \end{bmatrix}
      =
    \Exp_{\vec{z} \sim \normal(\vec{\mu}, \matr{\Sigma})} \left[
      p^*_{(\matr{U}, \vec{v})}(\vec{z})
    \right]
  \end{align*}
  \noindent where $p_{(\matr{U}, \vec{v})}(\vec{z})$,
  $p_{(\matr{U}, \vec{v})}^*(\vec{z})$ are polynomials of degree
  at most $4$ whose coefficients depend on $\matr{U}$ and $\vec{v}$. Also,
  observe that for every $\vec{z} \in \reals^d$ we have that
  $p_{(\matr{U}, \vec{v})}(\vec{z}) \ge 0$ and
  $p_{(\matr{U}, \vec{v})}^*(\vec{z}) \ge 0$. From Claim
  \ref{clm:eigenvaluesofRprimevsRstar} we get that
  $\Exp_{\vec{z} \sim \normal(\vec{\mu}, \matr{\Sigma})} \left[ p_{(\matr{U}, \vec{v})}(\vec{z}) \right] \ge \Exp_{\vec{z} \sim \normal(\vec{\mu}, \matr{\Sigma})} \left[ p^*_{(\matr{U}, \vec{v})}(\vec{z}) \right]$
  and from Lemma \ref{lem:strongConcavityWithoutTruncation} we get that
  $\Exp_{\vec{z} \sim \normal(\vec{\mu}, \matr{\Sigma})} \left[ p^*_{(\matr{U}, \vec{v})}(\vec{z}) \right] \ge \lambda_m(\vec{\mu}, \matr{\Sigma})$
  and therefore it holds that
  \begin{equation} \label{eq:proofOfStrongConcavityWithTruncation1}
    \Exp_{\vec{z} \sim \normal(\vec{\mu}, \matr{\Sigma})} \left[ p_{(\matr{U}, \vec{v})}(\vec{z}) \right] \ge \lambda_m(\vec{\mu}, \matr{\Sigma}).
  \end{equation}

  \noindent What is left is to lower bound
  $\Exp_{\vec{z} \sim \normal(\vec{\mu}, \matr{\Sigma}, S)} \left[ p_{(\matr{U}, \vec{v})}(\vec{z}) \right]$
  with respect to
  $\Exp_{\vec{z} \sim \normal(\vec{\mu}, \matr{\Sigma})} \left[ p_{(\matr{U}, \vec{v})}(\vec{z}) \right]$.
  For this purpose we are going to use Theorem
  \ref{thm:GaussianMeasurePolynomialThresholdFunctions} and the fact that
  $\normal(\vec{\mu}, \matr{\Sigma}; S) \ge \beta$. We define
  \[ \gamma = \left( \frac{1}{C} \frac{\beta}{8} \right)^4 \lambda_m(\vec{\mu}, \matr{\Sigma}) ~~ \text{ and } ~~ \bar{S} = \{ \vec{x} \in \reals^d \mid p_{(\matr{U}, \vec{v})}(\vec{x}) \le \gamma \} \]
  \noindent and applying Theorem
  \ref{thm:GaussianMeasurePolynomialThresholdFunctions} together with
  \eqref{eq:proofOfStrongConcavityWithTruncation1}
  we get that $\normal(\vec{\mu}, \matr{\Sigma}; \bar{S}) \le \frac{\beta}{2}$.
  Therefore, when calculating the expectation
  $\Exp_{\vec{z} \sim \normal(\vec{\mu}, \matr{\Sigma}, S)}
  \left[
    p_{(\matr{U}, \vec{v})}(\vec{z})
  \right]$ at least the half of the mass of the mass of
  $\normal(\vec{\mu}, \matr{\Sigma}, S)$ is in points $\vec{z}$ such that
  $\vec{z} \not\in \bar{S}$. This implies that
  \[ \Exp_{\vec{z} \sim \normal(\vec{\mu}, \matr{\Sigma}, S)} \left[ p_{(\matr{U}, \vec{v})}(\vec{z}) \right] \ge \frac{1}{2} \gamma = \frac{1}{2^{13}} \left( \frac{\beta}{C} \right)^4 \lambda_m(\vec{\mu}, \matr{\Sigma}), \]
  \noindent and the lemma follows.
\end{proof}

\subsection{Initialization with Empirical Mean and Empirical Covariance Matrix} \label{sec:initialization}

  In order to efficiently optimize the negative log-likelihood and maintain its
strong-convexity we need to search over a set of parameters that assign
significant measure to the truncation set we consider. In addition, we need
that the initial point of our algorithm lies in that set and satisfies this
condition.

  Before defining the appropriate set of parameters for the Projected Gradient
Descent algorithm, we prove that a good initialization for PSGD is the
empirical mean and the empirical covariance matrix of the truncated distribution
$\normal(\vec \mu,\matr \Sigma,S)$.

  We begin by showing that only few truncated samples suffice to obtain
accurate estimates $ \hat{\vec{\mu}}_S$ and $\hat{\matr{\Sigma}}_S$ of the mean
and covariance.

\begin{lemma}[Concentration of Empirical Mean and Empirical Covariance] \label{lem:conditional estimation}
    Let $(\vec{\mu}_S, \matr{\Sigma}_S)$ be the mean and covariance of the
  truncated Gaussian $\normal(\vec \mu,\matr \Sigma,S)$ where
  $S \subseteq \reals^d$ such that $\normal(\vec \mu,\matr \Sigma;S) = \alpha$.
  Using $\tilde O(\frac{d}{\eps^2} \log(1/\alpha) \log^2(1/\delta))$ samples,
  we can compute estimates $ \hat{\vec{\mu}}_S$ and $\hat{\matr{\Sigma}}_S$ such
  that
  \[ \norm{\matr{\Sigma}^{-1/2} (\hat{\vec{\mu}}_S - \vec{\mu}_S)}_2 \le \eps \quad \mathrm{ and } \quad (1-\eps) \matr{\Sigma}_S \preceq \hat{\matr{\Sigma}}_S \preceq (1+\eps) \matr{\Sigma}_S \]
  with probability at least $1 - \delta$.
\end{lemma}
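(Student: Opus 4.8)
The plan is to reduce to the isotropic case and then estimate the conditional mean and covariance separately, controlling both errors through moment bounds on the truncated Gaussian that degrade only mildly (logarithmically) in $1/\alpha$. First I would apply the linear map $\vec{y}=\matr{\Sigma}^{-1/2}(\vec{x}-\vec{\mu})$, under which $\normal(\vec{\mu},\matr{\Sigma},S)$ becomes $\normal(\vec{0},\matr{I},S')$ with $S'=\matr{\Sigma}^{-1/2}(S-\vec{\mu})$ and $\normal(\vec{0},\matr{I};S')=\alpha$, while the conditional moments transform as $\vec{\mu}_{S'}=\matr{\Sigma}^{-1/2}(\vec{\mu}_S-\vec{\mu})$ and $\matr{\Sigma}_{S'}=\matr{\Sigma}^{-1/2}\matr{\Sigma}_S\matr{\Sigma}^{-1/2}$. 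Since the mean target is stated in the $\matr{\Sigma}^{-1/2}$-norm and the covariance target is a congruence-invariant positive-semidefinite sandwich, it suffices to prove the lemma for a standard Gaussian truncated to a set of measure $\alpha$: the empirical mean is within $\eps$ in $\ell_2$ and the empirical covariance is a $(1\pm\eps)$ multiplicative approximation, using $\tilde{O}(d\log(1/\alpha)\log^2(1/\delta)/\eps^2)$ samples.

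Next I would establish the conditional moment bounds. The key structural fact is that, for any unit vector $\vec{u}$, the moments of the one-dimensional marginal $\langle\vec{u},\vec{y}\rangle$ under $\normal(\vec{0},\matr{I},S')$ are dominated by those of the extremal tail event $\{|\langle\vec{u},\vec{y}\rangle|\ge \tau_\alpha\}$ of a standard Gaussian, where $\tau_\alpha=\Theta(\sqrt{\log(1/\alpha)})$ is chosen so that this event has mass $\alpha$. This yields $\Exp[\langle\vec{u},\vec{y}\rangle^2\mid S']=O(\log(1/\alpha))$ for every $\vec{u}$, hence $\matr{\Sigma}_{S'}\preceq O(\log(1/\alpha))\,\matr{I}$ and $\norm{\vec{\mu}_{S'}}_2=O(\sqrt{\log(1/\alpha)})$. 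Applying Theorem~\ref{thm:MassartConcentrationLemma} to $\norm{\vec{y}}_2^2$ further shows that all but a $\delta$-fraction of the truncated samples lie in a ball of radius $O(\sqrt{d+\log(1/(\alpha\delta))})$, a bound I will use to tame heavy tails in the covariance step.

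For the mean, I would take the empirical average $\hat{\vec{\mu}}_{S'}=\tfrac{1}{n}\sum_i\vec{y}_i$. The per-direction variance bound $O(\log(1/\alpha))$ together with a vector Bernstein inequality gives $\norm{\hat{\vec{\mu}}_{S'}-\vec{\mu}_{S'}}_2\le\eps$ with $n=\tilde{O}(d\log(1/\alpha)/\eps^2)$ in expectation, and a median-of-means boosting together with the union bound over the covariance step accounts for the high-probability $\log^2(1/\delta)$ factor. Undoing the isotropic map returns exactly the first stated guarantee.

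For the covariance I would whiten by the true conditional covariance, setting $\vec{w}_i=\matr{\Sigma}_{S'}^{-1/2}(\vec{y}_i-\vec{\mu}_{S'})$ so that $\Exp[\vec{w}\vec{w}^T]=\matr{I}$, and reduce the goal to $\norm{\tfrac{1}{n}\sum_i\vec{w}_i\vec{w}_i^T-\matr{I}}_2\le\eps$ (the use of $\hat{\vec{\mu}}_{S'}$ in place of $\vec{\mu}_{S'}$ contributes only a lower-order correction controlled by the mean step). This is where the main obstacle lies: whitening by $\matr{\Sigma}_{S'}^{-1/2}$ amplifies fluctuations along directions where $\matr{\Sigma}_{S'}$ has small eigenvalues, so the fourth moments of $\vec{w}$ are not uniformly $O(\log(1/\alpha))$ and a naive matrix Bernstein bound is insufficient to obtain the \emph{relative} guarantee. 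I would address this by first discarding the $O(\delta)$-fraction of large-norm outliers identified via Theorem~\ref{thm:MassartConcentrationLemma} (arguing through the radius bound that their removal perturbs the empirical covariance by at most $\eps$ in spectral norm), and then invoking the covariance concentration of Theorem~\ref{thm:DiakonikolasConcentrationLemma} on the remaining bounded samples to conclude closeness to $\matr{I}$. Reversing the whitening and the isotropic reduction then recovers $(1-\eps)\matr{\Sigma}_S\preceq\hat{\matr{\Sigma}}_S\preceq(1+\eps)\matr{\Sigma}_S$. The mean bound and the moment upper bounds are routine given the extremal-tail comparison; the delicate point throughout is the multiplicative covariance accuracy in the low-variance directions, which is precisely what the outlier truncation plus Theorem~\ref{thm:DiakonikolasConcentrationLemma} are meant to deliver.
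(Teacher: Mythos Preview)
Your reduction to the isotropic case and your mean estimation argument essentially match the paper (the paper conditions on all coordinates being at most $\log(nd/\alpha\delta)$ and applies Hoeffding coordinate-wise; your vector Bernstein plus median-of-means is a close variant). The covariance step is where you diverge, and there is a concrete gap. After whitening by $\matr\Sigma_{S'}^{-1/2}$ you propose to discard high-norm outliers and then invoke Theorem~\ref{thm:DiakonikolasConcentrationLemma} on the remaining points to conclude that their empirical second moment is close to $\matr I$. But Theorem~\ref{thm:DiakonikolasConcentrationLemma} is stated specifically for i.i.d.\ samples from $\normal(\vec 0,\matr I)$ and subsets thereof; it says nothing about arbitrary bounded vectors or about samples from a whitened truncated Gaussian. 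Even if it did apply, its conclusion is a Frobenius-norm bound with sample requirement $\Omega(d^2/\delta_2^2)$, overshooting the $\tilde O(d/\eps^2)$ spectral target of the lemma. So this step does not go through as written.

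The paper's route for the covariance is shorter: having observed that the truncated samples are (with probability $1-\delta$) coordinate-wise bounded by $\log(nd/\alpha\delta)$, it treats the truncated Gaussian as a sub-gaussian distribution and appeals directly to a black-box sub-gaussian covariance-estimation inequality (Vershynin, Corollary~5.52) to obtain the multiplicative sandwich $(1-\eps)\matr\Sigma_S\preceq\hat{\matr\Sigma}_S\preceq(1+\eps)\matr\Sigma_S$ with $\tilde O(d/\eps^2)$ samples. No whitening by $\matr\Sigma_{S'}$ and no explicit outlier-removal layer are used. Your worry about low-variance directions is legitimate in spirit, but the tool you reach for is the wrong one; if you wish to salvage your route you would need a sub-gaussian or matrix-Bernstein bound that applies to the whitened truncated law directly, together with an argument that its sub-gaussian norm stays under control.
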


\begin{proof}
    Let $\{\vec{x}^{(i)}\}_{i = 1}^n$ be the set of $n$ i.i.d samples drawn
  from the truncated Gaussian
  $\normal(\vec \mu,\matr \Sigma,S)$. Let also
  $\hat{\vec{\mu}}_S = \frac{1}{n} \sum_{i = 1}^n \vec{x}^{(i)}$ be the
  empirical mean and
  $\hat{\matr{\Sigma}}_S = \frac{1}{n} \sum_{i = 1}^n (\vec x^{(i)} - \hat{\vec{\mu}}_S)(\vec{x}^{(i)} - \hat{\vec{\mu}}_S)^T$
  be the empirical covariance matrix.

    If we show the desired inequalities for
  $(\vec \mu,\matr \Sigma) = (\vec 0, \matr I)$, then we the general case
  follows too by applying a simple affine transformation to the samples
  $\vec{x}^{(i)}$ and the set $S$. Hence we assume without loss of generality
  that $(\vec \mu,\matr \Sigma) = (\vec 0, \matr I)$.

    The $n$ samples $\vec{x}^{(i)}$ drawn from $\normal(\vec{0}, \matr{I}, S)$,
  can be seen as $O(n/\alpha)$ samples from $\normal(\vec{0}, \matr{I})$ where
  we only keep those that follows inside the set $S$. Using the well known
  concentration of measure result about the maximum of $K$ standard normal
  variables we get that with probability at least $1 - \delta$, for all samples
  $i$ and coordinates $j$, it holds that
  $\abs{x^{(i)}_j} \le \log\left(\frac{n d}{\alpha \delta}\right)$. Hence if
  we condition on that event we can use Hoeffding's inequality and we get that
  \[ \Prob\left( \abs{\hat{\vec \mu}_{S,j}-\vec \mu_{S,j}} \geq \frac{\eps}{\sqrt{d}} \right) \leq 2 \exp\left(-\frac{n \eps^2}{ d \log(1/\alpha\delta)} \right). \]
  Therefore, if
  $n \ge \Omega(\frac{d \log(n d/\alpha\delta) \log(1/\delta)}{\varepsilon^2})$,
  $n$ samples are sufficient to learn $\vec{\mu}_S$ with error $\eps$ with
  probability $1 - \delta$.
  \smallskip

    The same way we can use the matrix concentration inequality Corollary 5.52
  of \cite{vershynin2010introduction} to get that with probability at least
  $1 - \delta$ is holds that
  \[ (1 - \eps) \matr{\Sigma}_S \preceq \hat{\matr{\Sigma}}_S \preceq (1 + \eps) \matr{\Sigma}_S \]
  provided that $n \ge \Omega(d/\eps^2 \log(n d/\alpha\delta) \log(d/\delta))$.
  The lemma then follows if we set $n$ to be
  $\tilde \Theta(d/\eps^2 \log^2(1/\alpha\delta))$.
\end{proof}

  Our next goal is to show that the distance of the estimates
$\hat{\vec{\mu}}_S$ and $\hat{\matr{\Sigma}}_S$ to the true parameters
$\vec{\mu}, \matr{\Sigma}$ is a constant that depends only on the mass $\alpha$
of the set $S$. We do this by first considering the distance between
$\vec{\mu}, \matr{\Sigma}$ and the true mean and covariance of the truncated
Gaussian distribution $\normal(\vec{\mu}, \matr{\Sigma}, S)$ which we denote by
${\vec{\mu}}_S$ and ${\matr{\Sigma}}_S$.

\begin{lemma}[Truncated vs Non-truncated Parameters] \label{lem:conditional mean closeness} \label{lemma:initialization satisfies sufficient}
    Let $(\vec{\mu}_S, \matr{\Sigma}_S)$ be the mean and covariance matrix of
  the truncated Gaussian $\normal(\vec{\mu}, \matr{\Sigma}, S)$ with
  $\normal(\vec{\mu}, \matr{\Sigma}; S) = \alpha$. Then the following
  statements hold
  \begin{enumerate}
  \item $ \norm{\vec{\mu}_S - \vec{\mu}}_{\matr{\Sigma}}
           \leq O\left(\sqrt{\log \frac{1}{\alpha}}\right)$,
  \item ${\matr{\Sigma}}_S \succeq \Omega(\alpha^2) \matr{\Sigma}$, and
  \item $\norm{\matr \Sigma^{-1/2} \matr{\Sigma}_S \matr{\Sigma}^{-1/2} - \matr{I}}_F \le O\left(\log \frac{1}{\alpha}\right)$.
\end{enumerate}
\end{lemma}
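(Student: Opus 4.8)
The plan is to reduce everything to a standardized Gaussian and then prove the three statements about $\normal(\vec 0,\matr I)$ truncated to a set of measure $\alpha$. Apply the whitening map $\vec y=\matr\Sigma^{-1/2}(\vec x-\vec\mu)$, under which $\normal(\vec\mu,\matr\Sigma)$ becomes $\normal(\vec 0,\matr I)$ and $S$ becomes $S'=\matr\Sigma^{-1/2}(S-\vec\mu)$ with $\normal(\vec 0,\matr I;S')=\alpha$. This map sends the conditional mean to $\matr\Sigma^{-1/2}(\vec\mu_S-\vec\mu)=\vec\mu_{S'}$ and the conditional covariance to $\matr\Sigma^{-1/2}\matr\Sigma_S\matr\Sigma^{-1/2}=\matr\Sigma_{S'}$, so the three claims become, respectively, $\norm{\vec\mu_{S'}}_2\le O(\sqrt{\log(1/\alpha)})$, $\matr\Sigma_{S'}\succeq\Omega(\alpha^2)\matr I$, and $\norm{\matr\Sigma_{S'}-\matr I}_F\le O(\log(1/\alpha))$. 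The common engine throughout is the following extremal principle: for any function $W(\vec y)$, the quantity $\Exp[W(\vec y)\mid \vec y\in S']=\frac1\alpha\int_{S'}W\,\mathrm{d}\normal$ is maximized over all sets of Gaussian measure $\alpha$ by the superlevel set $\{W\ge w_\alpha\}$ with $\normal(\{W\ge w_\alpha\})=\alpha$, hence $\Exp[W\mid\vec y\in S']\le\Exp[W\mid W\ge w_\alpha]$ under the untruncated Gaussian.

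For the mean, pick the unit vector $\vec u=\vec\mu_{S'}/\norm{\vec\mu_{S'}}_2$, so that $\norm{\vec\mu_{S'}}_2=\Exp[\langle\vec y,\vec u\rangle\mid\vec y\in S']$. Since $g=\langle\vec y,\vec u\rangle\sim\normal(0,1)$, the extremal principle gives $\norm{\vec\mu_{S'}}_2\le\Exp[g\mid g\ge t_\alpha]$ with $\Prob[g\ge t_\alpha]=\alpha$; standard Gaussian tail estimates give $t_\alpha\le O(\sqrt{\log(1/\alpha)})$ and $\Exp[g\mid g\ge t_\alpha]=\phi(t_\alpha)/\alpha\le O(\sqrt{\log(1/\alpha)})$ via the Mills-ratio lower bound, which is claim (1). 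For the lower bound on the covariance, fix a unit vector $\vec u$ and bound $\vec u^T\matr\Sigma_{S'}\vec u=\Var[g\mid\vec y\in S']$ from below, where again $g=\langle\vec y,\vec u\rangle$. The marginal density of $g$ under the truncated law is $f(t)=\frac1\alpha\phi(t)\,\Prob[\vec y\in S'\mid g=t]\le\phi(t)/\alpha\le B$ with $B=1/(\alpha\sqrt{2\pi})$. A one-dimensional distribution whose density is at most $B$ has variance at least $1/(12B^2)$ (the minimizer is uniform on an interval of length $1/B$), so $\Var[g\mid\vec y\in S']\ge\Omega(\alpha^2)$ uniformly over $\vec u$, giving claim (2).

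The Frobenius upper bound is the most delicate step and the main obstacle. Writing $\matr M_{S'}=\Exp[\vec y\vec y^T\mid\vec y\in S']$, we have $\matr\Sigma_{S'}-\matr I=(\matr M_{S'}-\matr I)-\vec\mu_{S'}\vec\mu_{S'}^T$, and $\norm{\vec\mu_{S'}\vec\mu_{S'}^T}_F=\norm{\vec\mu_{S'}}_2^2\le O(\log(1/\alpha))$ by claim (1), so it suffices to bound $\norm{\matr M_{S'}-\matr I}_F$. Using the variational characterization $\norm{\matr M_{S'}-\matr I}_F=\sup_{\matr U\in\symm_d,\ \norm{\matr U}_F=1}\langle\matr U,\matr M_{S'}-\matr I\rangle$ (valid since $\matr M_{S'}-\matr I$ is symmetric, so the supremum selects the optimal sign automatically) and diagonalizing $\matr U=\sum_i\theta_i\vec w_i\vec w_i^T$ with $\sum_i\theta_i^2=1$, the objective becomes $\Exp[\,W\mid\vec y\in S']$ for $W=\sum_i\theta_i(g_i^2-1)$, where $g_i=\langle\vec y,\vec w_i\rangle$ are i.i.d. $\normal(0,1)$ under the untruncated law. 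Here the weight vector $\vec\theta$ satisfies $\norm{\vec\theta}_2=1$ and $\norm{\vec\theta}_\infty\le 1$, so Theorem~\ref{thm:MassartConcentrationLemma} yields the sub-exponential tail $\Prob[W\ge 2\sqrt t+2t]\le e^{-t}$.

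Feeding this tail into the extremal principle, the level $w_\alpha$ with $\Prob[W\ge w_\alpha]=\alpha$ satisfies $w_\alpha\le 2\sqrt{\log(1/\alpha)}+2\log(1/\alpha)=O(\log(1/\alpha))$, and $\Exp[W\mid W\ge w_\alpha]=w_\alpha+\frac1\alpha\int_{w_\alpha}^\infty\Prob[W\ge s]\,\mathrm{d}s$, where the integral is $O(\alpha)$ by the same tail bound so the correction is $O(1)$; hence $\norm{\matr M_{S'}-\matr I}_F\le O(\log(1/\alpha))$, and combining the two pieces gives claim (3). The crux is obtaining a \emph{dimension-free} $\log(1/\alpha)$ bound, which is precisely why we route the matrix quantity through the variational principle down to a scalar weighted chi-square and invoke Laurent--Massart, rather than crudely bounding each of the $d^2$ entries of $\matr M_{S'}$; a naive entrywise argument would lose a factor of $d$.
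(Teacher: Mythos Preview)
Your proofs of parts (1) and (2) are correct and match the paper's approach essentially line-for-line: the extremal half-space argument for the mean, and the bounded-density-implies-minimum-variance argument for the smallest eigenvalue.

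Part (3) is where you depart from the paper, and your route is the more elegant one. The paper proves (3) by a limiting argument: it invokes the matrix-concentration theorem of Diakonikolas et al.\ (Theorem~\ref{thm:DiakonikolasConcentrationLemma}), applied to the subset of $n$ i.i.d.\ Gaussian samples that land in $S$, and then lets $n\to\infty$ so that the empirical second-moment matrix converges to $\matr\Sigma_S+\vec\mu_S\vec\mu_S^{\,T}$. This is correct but somewhat roundabout---a finite-sample concentration statement is being used to derive a population-level inequality. Your argument is direct and self-contained: reduce $\norm{\matr M_{S'}-\matr I}_F$ to a supremum over unit-Frobenius symmetric $\matr U$, diagonalize, and observe that the objective becomes $\Exp[W\mid\vec y\in S']$ for a weighted centered chi-square $W=\sum_i\theta_i(g_i^2-1)$, whose upper tail is controlled uniformly in $\vec\theta$ by Laurent--Massart. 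This is really the engine inside the black-box theorem the paper invokes, so you are unpacking it rather than citing it.

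Two small points to tighten. First, Theorem~\ref{thm:MassartConcentrationLemma} as stated requires nonnegative weights; your $\theta_i$ can have either sign, so you should split $\vec\theta=\vec\theta^+-\vec\theta^-$ and combine the upper-tail bound on the positive part with the lower-tail bound on the negative part (this costs at most a factor of $2$ in the probability and in the constants). Second, your claim that $\int_{w_\alpha}^\infty\Prob[W\ge s]\,\mathrm ds=O(\alpha)$ needs a brief justification: split at $s_0=2\sqrt{\log(1/\alpha)}+2\log(1/\alpha)$; on $[w_\alpha,s_0]$ the integrand is at most $\alpha$ and the interval has length $O(\log(1/\alpha))$, and on $[s_0,\infty)$ the substitution $s=2\sqrt t+2t$ together with the tail bound gives $\int_{\log(1/\alpha)}^\infty e^{-t}(t^{-1/2}+2)\,\mathrm dt=O(\alpha)$. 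So the integral is actually $O(\alpha\log(1/\alpha))$, not $O(\alpha)$, but after dividing by $\alpha$ this still contributes only $O(\log(1/\alpha))$, so your final bound is unaffected.
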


\begin{proof}
    As in the proof of Lemma \ref{lem:conditional estimation} we can assume that
  $(\vec{\mu}, \matr{\Sigma}) = (\vec{0}, \matr{I})$ and then the general case
  follows by applying an affine transformation to $\vec{\mu}$, $\matr{\Sigma}$
  and $S$. Hence we assume without loss of generality that
  $\vec{\mu} = \vec{0}$, $\matr{\Sigma} = \matr{I}$. Once we have established
  that we have an additional freedom to transform the space. In particular,
  after applying the aforementioned affine transformation we have that
  $\vec{\mu} = \vec{0}$ and $\matr{\Sigma} = \matr{I}$, hence if we apply any
  additional \textit{unitary transformation} of the space we still have
  $\vec{\mu} = \vec{0}$ and $\matr{\Sigma} = \matr{I}$. Additionally we know
  that since $\Sigma_S$ is a symmetric matrix, it can be diagonalized by a
  unitary matrix. Therefore we can apply also this transformation of the space
  that does not change anything in the results of this lemma since it is
  unitary. We conclude that we can assume without loss of generality that
  $\vec{\mu} = \vec{0}$, $\matr{\Sigma} = \matr{I}$ and that $\matr{\Sigma}_S$
  is a diagonal matrix with entries $\lambda_1 \le \dots \le \lambda_d$.
  \medskip

  \noindent \textbf{Proof of 1.} We will show that
  \[ \norm{\vec{\mu}_S}_2 \leq \sqrt{2\log \frac{1}{\alpha}} + 1 \]
  which implies 1. for arbitrary $\vec{\mu}, \matr{\Sigma}$ after applying the
  standard transformation that we discussed in the beginning of the proof.
  Consider the direction of the sample mean $\vec{\hat{\mu}}_S$. The worst
  case subset $S \subset \reals^d$ of mass at least $\alpha$ that would
  maximize
  $\norm{\vec{\mu}_S}_2 = \frac{\Exp_{\vec{x} \sim \normal(\vec{0}, \matr{I})}\left[ \vec{1}_{\vec{x} \in S} \vec{x}^T \vec{\hat{\mu}}_S\right]}{\Exp_{\vec{x} \sim N(\vec{0},  \matr{I})}\left[\vec{1}_{\vec{x} \in S}\right]}$
  is the following:
  \[ S = \left\{ \vec{x}^T \vec{\hat{\mu}}_S > F^{-1}(1 - \alpha) \right\} \]
  where $F$ is the CDF of the standard normal distribution. Since
  $\alpha = 1 - F(t) \leq e^{-\frac{t^2}{2}}$, we have that
  $t \leq \sqrt{2\log(\frac{1}{\alpha})}$. The bound follows for the simple
  inequality $\Exp_{x \sim N(0, 1)}[x \vert x \geq t] \leq 1 + t$.
  \medskip

  \noindent \textbf{Proof of 2.} We want to bound the expectation
  $\lambda_1 = \Exp_{\vec{x}\sim \normal(\vec{0},\matr{I},S)}[(x_1 - \mu_{S,1})^2]$.
  Since $\normal(\vec{0},\matr{I};S) = \alpha$, the worst case set, i.e the one
  that minimizes $\lambda_1$, is the one that has $\alpha$ mass as close as
  possible to the hyperplane $x_1 = \mu_{S,1}$. However, the maximum mass that
  the $\normal(\vec{0}, \matr{I})$ Gaussian places at the set
  $\{x_1 \mid \abs{x_1 - \mu_{S,1}} < c\}$ is at most $2c$ as the density of
  the standard univariate Gaussian $\normal(0, 1)$ is at most $1$. Thus the
  $\Exp_{\vec{x} \sim \normal(\vec{0},\matr{I},S)}[(x_1 - \mu_{S,1})^2]$ is at
  least the variance of the uniform distribution $U[-\alpha/2, \alpha/2]$ which
  is $\alpha^2/12$. Thus $\lambda_i \ge \lambda_1 \ge \alpha^2/12$.
  \medskip

  \noindent \textbf{Proof of 3.} Finally, case 3, follows from
  Theorem~\ref{thm:DiakonikolasConcentrationLemma}. Consider any large set
  $\{\vec{x}^{(i)}\}_{i = 1}^n$ of $n$ samples from
  $\normal(\vec{\mu}, \matr{\Sigma})$. Theorem
  \ref{thm:DiakonikolasConcentrationLemma}, implies that with probability
  $1 - o(1/n)$, for all $T \subseteq [n]$ with $\abs{T} = \Theta( \alpha n )$,
  we have that
  $\norm{\sum_{i \in T} \frac{1}{\vert T\vert }\vec{x}^{(i)} \vec{x}^{(i) T} -\matr{I}}_F \geq \Omega\left( \log(1/\alpha) \right)$.
  In particular, the same is true for the set of $\Theta( \alpha n )$ samples
  that lie in the set $S$. As $n \rightarrow \infty$ the empirical second
  moment $\sum_{ib\in T} \frac{1}{\vert T\vert} \vec{x}^{(i)} \vec{x}^{(i) T}$
  converges to $\matr{\Sigma}_S + \vec{\mu}_S \vec{\mu}_S^T$. We thus obtain
  that
  $\norm{\matr{\Sigma}_S + \vec \mu_S \vec \mu_S^T - \matr{I}}_F \le \sqrt{ O\left( \log(1/\alpha) \right) }$,
  which implies that
  $\norm{\matr{\Sigma}_S - \matr{I}}_F \le \sqrt{ O\left( \log(1/\alpha) \right) } + \vec \mu_S \vec \mu_S^T \le O\left( \log(1/\alpha) \right)$.
\end{proof}

\noindent An immediate corollary of Lemma \ref{lem:conditional estimation} and
Lemma \ref{lem:conditional mean closeness} combined with
Theorem \ref{thm:DiakonikolasConcentrationLemma} is the following.

\begin{corollary}[Empirical Parameters vs True Parameters] \label{corollary:initialization}
    The empirical mean $\hat{\vec{\mu}}_S$ and covariance
  $\hat{\matr{\Sigma}}_S$ computed using $\tilde O(d^2 \log^2(1/\alpha\delta))$
  samples from a truncated Normal $\normal(\vec \mu,\matr \Sigma, S)$ with
  $\normal(\vec \mu,\matr \Sigma;S) = \alpha$ satisfies with probability at
  least $1 -\delta$ the following
  \begin{enumerate}
    \item $\norm{\hat{\vec{\mu}}_S - \vec{\mu}}_{\matr \Sigma} \leq O(\sqrt{\log \frac{1}{\alpha}})$,
    \item ${\hat{\matr{\Sigma}}}_S \succeq \Omega(\alpha^2) \matr{\Sigma}$,
    \item $\norm{\matr \Sigma^{-1/2} \matr{\hat{\Sigma}}_S \matr \Sigma^{-1/2}  - \matr I}_F \le O(\log \frac{1}{\alpha} )$.
  \end{enumerate}
\end{corollary}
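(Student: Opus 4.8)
The plan is to thread the true conditional parameters $(\vec{\mu}_S, \matr{\Sigma}_S)$ as an intermediary between the empirical estimates $(\hat{\vec{\mu}}_S, \hat{\matr{\Sigma}}_S)$ and the true unconditional parameters $(\vec{\mu}, \matr{\Sigma})$. Lemma~\ref{lem:conditional estimation} bounds the estimates against $(\vec{\mu}_S, \matr{\Sigma}_S)$ and Lemma~\ref{lem:conditional mean closeness} bounds $(\vec{\mu}_S, \matr{\Sigma}_S)$ against $(\vec{\mu}, \matr{\Sigma})$, so I would establish each of the three claims by combining the matching parts of the two lemmas. The accuracy parameter $\eps$ of Lemma~\ref{lem:conditional estimation} is the only free knob; I would set it to $\eps = \Theta(1/\sqrt{d})$, which by that lemma costs $\tilde{O}(d/\eps^2 \cdot \polylog(1/\alpha\delta)) = \tilde{O}(d^2 \polylog(1/\alpha\delta))$ truncated samples, matching the corollary. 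Working with the $\matr{\Sigma}$-conjugated quantities lets me keep $\matr{\Sigma}$ general, exactly as in the two lemmas.

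Claim~1 is a triangle inequality in the Mahalanobis norm, $\norm{\hat{\vec{\mu}}_S - \vec{\mu}}_{\matr{\Sigma}} \le \norm{\hat{\vec{\mu}}_S - \vec{\mu}_S}_{\matr{\Sigma}} + \norm{\vec{\mu}_S - \vec{\mu}}_{\matr{\Sigma}}$: the first summand equals $\snorm{\matr{\Sigma}^{-1/2}(\hat{\vec{\mu}}_S - \vec{\mu}_S)}{2} \le \eps = O(1)$ by Lemma~\ref{lem:conditional estimation}, and the second is $O(\sqrt{\log(1/\alpha)})$ by part~1 of Lemma~\ref{lem:conditional mean closeness}. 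Claim~2 chains the two semidefinite bounds $\hat{\matr{\Sigma}}_S \succeq (1-\eps)\matr{\Sigma}_S$ (Lemma~\ref{lem:conditional estimation}) and $\matr{\Sigma}_S \succeq \Omega(\alpha^2)\matr{\Sigma}$ (part~2 of Lemma~\ref{lem:conditional mean closeness}) to get $\hat{\matr{\Sigma}}_S \succeq (1-\eps)\Omega(\alpha^2)\matr{\Sigma} = \Omega(\alpha^2)\matr{\Sigma}$ since $\eps < 1/2$. Both are immediate once the lemmas are invoked.

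Claim~3 is the crux and the place I expect the real difficulty. The triangle inequality gives $\snorm{\matr{\Sigma}^{-1/2}\hat{\matr{\Sigma}}_S\matr{\Sigma}^{-1/2} - \matr{I}}{F} \le \snorm{\matr{\Sigma}^{-1/2}(\hat{\matr{\Sigma}}_S - \matr{\Sigma}_S)\matr{\Sigma}^{-1/2}}{F} + \snorm{\matr{\Sigma}^{-1/2}\matr{\Sigma}_S\matr{\Sigma}^{-1/2} - \matr{I}}{F}$, and the second term is $O(\log(1/\alpha))$ by part~3 of Lemma~\ref{lem:conditional mean closeness} --- this is exactly where Theorem~\ref{thm:DiakonikolasConcentrationLemma} enters the corollary, through that lemma. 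The obstacle is the first term: Lemma~\ref{lem:conditional estimation} only supplies the \emph{multiplicative} spectral bound $(1-\eps)\matr{\Sigma}_S \preceq \hat{\matr{\Sigma}}_S \preceq (1+\eps)\matr{\Sigma}_S$, which after conjugation yields spectral error $\eps$ but Frobenius error as large as $\eps\sqrt{d}\cdot\lambda_{\max}(\matr{\Sigma}^{-1/2}\matr{\Sigma}_S\matr{\Sigma}^{-1/2})$, and that eigenvalue is only $O(\log(1/\alpha))$. Turning a multiplicative spectral guarantee into an additive Frobenius one thus costs a factor of $\sqrt{d}$, and the remedy I would use is precisely to sharpen $\eps$ to $\Theta(1/\sqrt{d})$ so that $\eps\sqrt{d} = O(1)$ and the first term collapses to $O(\log(1/\alpha))$. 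This $\sqrt{d}$ loss is exactly why the sample count is $\tilde{O}(d^2)$ rather than the $\tilde{O}(d)$ one might naively hope for.

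Finally, I note an alternative for Claim~3 that bypasses $\matr{\Sigma}_S$: apply Theorem~\ref{thm:DiakonikolasConcentrationLemma} directly to the finite sample, viewing the truncated points as a subset $T$ of size $\Theta(\alpha n)$ among $n$ raw draws, so that $\snorm{\frac{1}{|T|}\sum_{i\in T}\vec{x}_i\vec{x}_i^T - \matr{I}}{F} = O(\log(1/\alpha))$, and then subtract the rank-one term using $\hat{\matr{\Sigma}}_S = \frac{1}{|T|}\sum_{i\in T}\vec{x}_i\vec{x}_i^T - \hat{\vec{\mu}}_S\hat{\vec{\mu}}_S^T$ together with $\norm{\hat{\vec{\mu}}_S\hat{\vec{\mu}}_S^T}_F = O(\log(1/\alpha))$ from Claim~1. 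This reproves Claim~3 without routing through $\matr{\Sigma}_S$, but it degrades the dependence on $\alpha$ in the sample count from polylog to polynomial, so the intermediary route above is the one I would adopt to match the stated bound.
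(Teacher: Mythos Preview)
Your proof is correct. For Claims~1 and~2 it is essentially the paper's argument: combine Lemma~\ref{lem:conditional estimation} with Lemma~\ref{lem:conditional mean closeness} via the triangle inequality and the semidefinite chain (the paper sets $\eps=1/2$ rather than $\Theta(1/\sqrt{d})$, but this is immaterial for those two claims).

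For Claim~3 you and the paper diverge. The paper does \emph{not} route through $\matr{\Sigma}_S$; instead it adopts exactly what you call the ``alternative'': it applies Theorem~\ref{thm:DiakonikolasConcentrationLemma} directly to the finite truncated sample, viewing the $\Theta(\alpha n)$ points landing in $S$ as the subset $T$ of $n$ raw Gaussian draws, and then strips off the rank-one term $\hat{\vec{\mu}}_S\hat{\vec{\mu}}_S^T$ using Claim~1. Your primary route --- bounding $\snorm{\matr{\Sigma}^{-1/2}(\hat{\matr{\Sigma}}_S-\matr{\Sigma}_S)\matr{\Sigma}^{-1/2}}{F}$ by $\eps\sqrt{d}\cdot\|\matr{\Sigma}^{-1/2}\matr{\Sigma}_S\matr{\Sigma}^{-1/2}\|_2$ and choosing $\eps=\Theta(1/\sqrt{d})$ --- is a genuinely different decomposition. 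It has the advantage you already identified: it keeps the $\alpha$-dependence in the sample count polylogarithmic, matching the corollary's stated $\tilde{O}(d^2\log^2(1/\alpha\delta))$ bound, whereas the direct invocation of Theorem~\ref{thm:DiakonikolasConcentrationLemma} needs $n=\Omega(d^2/\delta_2^2)$ untruncated draws with $\delta_2=O(\alpha\log(1/\alpha))$, i.e.\ $\poly(1/\alpha)\cdot d^2$ truncated samples. In the paper's regime $\alpha$ is a fixed constant, so this distinction is invisible downstream, but your route is the one that actually delivers the bound as written.
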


\begin{proof}
    The first two properties follow by applying Lemma
  \ref{lem:conditional estimation} with $\eps = 1/2$ to Lemma
  \ref{lem:conditional mean closeness}. The last one follows by Theorem
  \ref{thm:DiakonikolasConcentrationLemma}, using an identical argument to the
  proof of part 3. of Lemma \ref{lem:conditional mean closeness}. Note that the
  required sample complexity has a quadratic dependence on $d$ as it is
  necessary for closeness in Frobenius norm, and as it is required by Theorem
  \ref{thm:DiakonikolasConcentrationLemma}.
\end{proof}

\subsection{A Set of Parameters with Non-Trivial Mass in the Truncation Set} \label{sec:box-parameters}

  In the previous section, in Corollary \ref{corollary:initialization}, we
showed that the empirical mean $\hat{\vec{\mu}}_S$ and the empirical covariance
matrix $\hat{\matr{\Sigma}}_S$ of the truncated Gaussian distribution
$\normal(\vec{\mu}^*, \matr{\Sigma}^*, S)$ using
$n = \tilde{O}\left( d^2 \right)$ samples are close to the true parameters
$\vec{\mu}^*, \matr{\Sigma}^*$.

  In this section, in Lemma \ref{lemma:large mass}, we show that these
guarantees are sufficient to show that the Gaussian distribution
$\normal(\hat{\vec{\mu}}_S, \hat{\matr{\Sigma}}_S)$ assigns constant mass to
the set $S$. This way we can define, based on $\hat{\vec{\mu}}_S$ and on
$\hat{\matr{\Sigma}}_S$, a convex set of parameters with the property that
every Gaussian distribution with parameters in this set assigns constant mass
to the set $S$. This set of parameters is the one that we use to run the
Projected Stochastic Gradient Descent algorithm in Section \ref{sec:sgd-final}
to prove our Theorem \ref{thm:main theorem}. We begin with the statement and the
proof of Lemma \ref{lemma:large mass}.

\begin{lemma}\label{lemma:large mass}\label{lemma:sufficient constraint}
    Consider two Gaussian distributions $\normal(\vec{\mu}_1, \matr{\Sigma}_1)$
  and $\normal(\vec{\mu}_2, \matr{\Sigma}_2)$, such that for some
  $B \in \reals_+$
  \begin{enumerate}
    \item $\norm{\matr I - \matr \Sigma_1^{1/2} \matr \Sigma_2^{-1} \matr \Sigma_1^{1/2}}_F \le B$,
    \item $\frac{1}{B} \cdot \matr{I} \preceq \matr{\Sigma}_1^{-1/2} \matr{\Sigma}_2 \matr{\Sigma}_1^{-1/2} \preceq B \cdot \matr{I}$,
    \item $\norm{\matr \Sigma_2^{-1} \matr \Sigma_1^{1/2} (\vec \mu_1 - \vec \mu_2)}_2 \le B$.
  \end{enumerate}
  Suppose that for a set $S \subseteq \reals^d$ we have that
  $\normal(\vec{\mu}_1, \matr{\Sigma}_1; S) \ge \alpha$. Then
  $\normal(\vec{\mu}_2, \matr{\Sigma}_2; S) \ge (\alpha/12)^{23 B^5}$.
\end{lemma}

\begin{proof}
    Using the same argument as in the beginning of the proof of Lemma
  \ref{lem:conditional mean closeness} we may assume without loss of
  generality that $(\vec \mu_1, \matr \Sigma_1) = (\vec 0, \matr I)$ and that
  $\matr \Sigma_2 = \diag(\lambda_1, \dots, \lambda_d)$. Also, for simplicity
  we use $\vec{\mu}$ to denote $\vec{\mu}_2$. Hence the assumptions of the lemma
  can be rewritten as
  \begin{enumerate}
    \item $\sum_i (1 - 1/\lambda_i)^2 < B^2$,
    \item $1/B < \lambda_i < B$, and
    \item $\sqrt{\sum_i (\mu_i^2/\lambda_i)} < B$.
  \end{enumerate}
  \noindent The first two bounds also imply that
  \begin{align} \label{eq:proof:lemma:large mass:firstAssumption}
    \sum_i (1 - \lambda_i)^2 < B^4.
  \end{align}
  \noindent We begin the proof of the lemma by observing that
  \begin{align} \label{eq:proof:lemma:large mass:massRatioExpression}
    \normal(\vec{\mu}_2, \matr{\Sigma}_2; S) = \Exp_{\vec{x} \sim \normal(\vec{\mu}_1, \matr{\Sigma}_1)}\left[ \chara\{\vec{x} \in S\} \cdot \frac{\normal(\vec{\mu}_2, \matr{\Sigma}_2; x)}{\normal(\vec{\mu}_1, \matr{\Sigma}_1; x)} \right].
  \end{align}
  \noindent Also via simple calculations we have that
  \begin{align} \label{eq:proof:lemma:large mass:massRatioExpansion}
    \frac{\normal(\vec{\mu}_2, \matr{\Sigma}_2; \vec{x})}{\normal(\vec{\mu}_1, \matr{\Sigma}_1; \vec{x})} & = \exp\left( -\sum_i \left[\frac{(x_i - \mu_i)^2}{\lambda_i} - x_i^2 + \log \lambda_i\right] \right).
  \end{align}

    Our next step is to show that for a random
   $\vec{x} \sim \normal(\vec{\mu}_1, \matr{\Sigma}_1)$ with probability higher
  than $1 - \alpha/2$ the ratio
  $\frac{\normal(\vec{\mu}_2, \matr{\Sigma}_2; \vec{x})}{\normal(\vec{\mu}_1, \matr{\Sigma}_1; \vec{x})}$
  is larger than some bound $T$. This implies that
  \begin{align} \label{eq:proof:lemma:large mass:massRatioLowerBound}
  \normal(\vec \mu_2, \matr \Sigma_2; S) = \Exp_{\vec x \sim \normal(\vec \mu_1, \matr \Sigma_1)} \left[ \chara\{\vec{x} \in S\} \cdot \frac{\normal(\vec \mu_2, \matr \Sigma_2; x)}{\normal(\vec \mu_1, \matr \Sigma_1; x)} \right] \ge \frac{a}{2} \cdot T.
  \end{align}
  \noindent To obtain the bound $T$, we make the following observations based
  on our assumptions 1. - 3.
  \begin{enumerate}[label=(\alph*)]
    \item if $\lambda_i < 1/2$, then
      $\frac{(x_i - \mu_i)^2}{\lambda_i} - x_i^2 + \log \lambda_i \le B x_i^2 - 2 \frac{x_i \mu_i}{\lambda_i} + \frac{\mu_i^2}{\lambda_i}$,
    \item if $\lambda_i > 2$, then
      $\frac{(x_i - \mu_i)^2}{\lambda_i} - x_i^2 + \log \lambda_i < \frac{\mu_i^2}{\lambda_i} - 2 \frac{x_i \mu_i}{\lambda_i} + \log B$, and
    \item if $\lambda_i \in [1/2, 2]$, then
      $\frac{(x_i - \mu_i)^2}{\lambda_i} - x_i^2 + \log \lambda_i < \left(\frac{1}{\lambda_i} - 1\right) x_i^2 + \frac{\mu_i^2}{\lambda_i} - 2 \frac{x_i \mu_i}{\lambda_i} + \log \lambda_i$.
  \end{enumerate}
  \noindent By \eqref{eq:proof:lemma:large mass:firstAssumption} we have
  $\|\vec \lambda - \vec 1\|_2^2 \le B^4$ and hence the total number of
  eigenvalues $\lambda_i$ that satisfy $\lambda_i \not\in [1/2, 2]$ is at most
  $4 B^4$.
  \medskip

    If we group together all $i$ with $\lambda_i < 1/2$ then using Theorem
  \ref{thm:MassartConcentrationLemma} we have that
  \begin{align} \label{eq:proof:lemma:large mass:smallEigenvalues}
    \Prob_{\vec{x} \sim \normal(\vec{0}, \matr{I})} \left( \sum_{i : \lambda_i < 1/2} B x_i^2 > 8 B^5 \log(6/\alpha) \right) \le \frac{\alpha}{6}
  \end{align}
  \noindent where we have also used the fact that the summation has at most
  $4 B^4$ terms.
  \smallskip

  \noindent We also have the following inequality
  \begin{align} \label{eq:proof:lemma:large mass:outsideEigenvalues}
    \Prob_{\vec{x} \sim \normal(\vec{0}, \matr{I})} \left( - 2 \sum_i \frac{x_i \mu_i}{\lambda_i} \ge B^{3/2} \log(6/\alpha) \right) \le \frac{\alpha}{6}
  \end{align}
  \noindent where we have used concentration of measure for the random variable
  $\vec{w}^T \vec{x}$, with $w_i = \frac{\mu_i}{\lambda_i}$, that follows a
  single dimensional Gaussian distribution with mean $0$ and variance
  $\norm{\vec{w}}_2^2$. To bound $\norm{\vec{w}}_2$ in
  \eqref{eq:proof:lemma:large mass:outsideEigenvalues} we have used the third
  assumption of the lemma that implies
  \[ \norm{\vec{w}}_2 = \sqrt{\sum_i \frac{\mu_i^2}{\lambda_i^2}} \le \sqrt{B} \left( \sqrt{\sum_i \frac{\mu_i^2}{\lambda_i}} \right) \le B^{3/2}. \]

  \noindent If we group together all $i$ with $\lambda_i \in [1, 2]$ then using
  Theorem \ref{thm:MassartConcentrationLemma} with
  $a_i = 1 - \frac{1}{\lambda_i}$ we have that
  \begin{align} \label{eq:proof:lemma:large mass:mediumSmallEigenvalues}
    \Prob_{\vec{x} \sim \normal(\vec{0}, \matr{I})} \left( \sum_{i : \lambda_i \in [1,2]} \left(\frac{1}{\lambda_i} - 1\right) (x_i^2 - 1) \ge 2 \sqrt{\sum_{i : \lambda_i \in [1,2]} \left(\frac{1}{\lambda_i} - 1\right)^2 \log\frac{12}{\alpha}} \right) \le \frac{\alpha}{12}.
  \end{align}

  \noindent If we group together all the eigenvalues with
  $\lambda_i \in [1/2, 1]$ then using Theorem
  \ref{thm:MassartConcentrationLemma} we have that
  \begin{align} \label{eq:proof:lemma:large mass:mediumBigEigenvalues}
    \Prob_{\vec{x} \sim \normal(\vec{0}, \matr{I})} \left( \sum_{i : \lambda_i \in [\frac{1}{2}, 1]} \left(\frac{1}{\lambda_i} - 1\right) (x_i^2 - 1) \ge 2 \sqrt{\sum_{i : \lambda_i \in [\frac{1}{2}, 1]} \left(\frac{1}{\lambda_i} - 1\right)^2 \log\frac{12}{\alpha}} + 4 \log\frac{12}{\alpha} \right) \le \frac{\alpha}{12}.
  \end{align}

  \noindent Now we combine
  \eqref{eq:proof:lemma:large mass:mediumSmallEigenvalues} with
  \eqref{eq:proof:lemma:large mass:mediumBigEigenvalues} and using the
  first assumption of the lemma, namely that
  $\sum_i (1/\lambda_i - 1)^2 < B^2$, we get that
  \begin{align*}
    \Prob_{\vec{x} \sim \normal(\vec{0}, \matr{I})} \left( \sum_{i : \lambda_i \in [\frac{1}{2}, 2]} \left(\frac{1}{\lambda_i} - 1\right) x_i^2 \ge
    \sum_{i : \lambda_i \in [\frac{1}{2}, 2]} \left(\frac{1}{\lambda_i} - 1\right) + (4 + 2 \cdot B) \log\frac{12}{\alpha} \right) \le \frac{\alpha}{6}.
  \end{align*}
  \noindent Moreover, for all $\lambda_i \in [1/2, 2]$, it holds that
  $(1/\lambda_i - 1) + \log \lambda_i \le 4 (\lambda_i - 1)^2$ which combined
  with the above inequality gives us
  \begin{align*}
    \Prob_{\vec{x} \sim \normal(\vec{0}, \matr{I})} \left( \sum_{i : \lambda_i \in [\frac{1}{2}, 2]} \left[ \left(\frac{1}{\lambda_i} - 1\right) x_i^2 + \log \lambda_i \right] \ge
    \sum_{i : \lambda_i \in [\frac{1}{2}, 2]} 4 \left(\lambda_i^2 - 1\right) + (4 + 2 \cdot B) \log\frac{12}{\alpha} \right) \le \frac{\alpha}{6}.
  \end{align*}
  where if we apply \eqref{eq:proof:lemma:large mass:firstAssumption} then we
  get that
  \begin{align} \label{eq:proof:lemma:large mass:mediumEigenvalues}
    \Prob_{\vec{x} \sim \normal(\vec{0}, \matr{I})} \left( \sum_{i : \lambda_i \in [\frac{1}{2}, 2]} \left[ \left(\frac{1}{\lambda_i} - 1\right) x_i^2 + \log \lambda_i \right] \ge
    4 B^4 + (4 + 2 \cdot B) \log\frac{12}{\alpha} \right) \le \frac{\alpha}{6}.
  \end{align}

  \noindent We now combine everything together. Using (a), (b) and (c) we have
  that
  \begin{align*}
    \sum_i \left[\frac{(x_i - \mu_i)^2}{\lambda_i} - x_i^2 + \log \lambda_i\right] & = \sum_{i : \lambda_i < 1/2} B x_i^2 - 2 \sum_i \frac{x_i \mu_i}{\lambda_i} + \sum_{i : \lambda_i \in [\frac{1}{2}, 2]} \left[ \left(\frac{1}{\lambda_i} - 1\right) x_i^2 + \log \lambda_i \right] \\
    & ~~~~~~~~~~~~~~ + \sum_{i : \lambda_i > 2} \log B.
  \end{align*}
  If in the above expression we use the equations
  \eqref{eq:proof:lemma:large mass:smallEigenvalues},
  \eqref{eq:proof:lemma:large mass:outsideEigenvalues}, and
  \eqref{eq:proof:lemma:large mass:mediumEigenvalues} together with the fact
  that the number of $\lambda_i$'s that have $\lambda_i > 2$ is at most $4 B^4$,
  as we noted below the statement (a) - (c), we get the following
  \begin{align} \label{eq:proof:lemma:large mass:combinedEigenvalues}
    \Prob_{\vec{x} \sim \normal(\vec{0}, \matr{I})} \left( \sum_i \left[\frac{(x_i - \mu_i)^2}{\lambda_i} - x_i^2 + \log \lambda_i\right] \ge
    23 \cdot B^5 \cdot \log\left( \frac{12}{\alpha} \right) \right) \le \frac{\alpha}{2}.
  \end{align}
  \noindent Finally the lemma follows by combining the equations
  \eqref{eq:proof:lemma:large mass:massRatioLowerBound},
  \eqref{eq:proof:lemma:large mass:massRatioExpansion}, and
  \eqref{eq:proof:lemma:large mass:combinedEigenvalues}.
\end{proof}

  Our next goal is to apply Lemma \ref{lemma:large mass} to bound the measure
assigned to $S$ by $\normal(\hat{\vec{\mu}}_S, \hat{\matr{\Sigma}}_S)$. For
this, we need to convert the bounds given by Corollary
\ref{corollary:initialization} to those required to apply Lemma
\ref{lemma:large mass}.

\begin{proposition} \label{prop:massOfEmpirical}
    It holds that
  \begin{enumerate} \label{prop:bounds}
    \item[$\triangleright$]
      $\norm{\matr I - \matr \Sigma^{* 1/2} \matr{\hat{\Sigma}_S}^{-1} \matr \Sigma^{* 1/2}}_F \le O\left(\frac{\log(1/\alpha)}{\alpha^2}\right)
      \quad \mathrm{ and } \quad
      \norm{\matr I - \matr{\hat{\Sigma}_S}^{1/2} \matr \Sigma^{* -1} \matr{\hat{\Sigma}_S}^{1/2}}_F \le O\left(\frac{\log(1/\alpha)}{\alpha^2}\right)$,
    \item[$\triangleright$]
      $\Omega(\alpha^2) \cdot \matr{I} \preceq \matr \Sigma^{* -1/2} \matr{\hat{\Sigma}_S} \matr \Sigma^{* -1/2} \le O\left(\frac{1}{\alpha^2}\right) \cdot \matr{I}
      \quad \mathrm{ and } \quad
      \Omega(\alpha^2) \cdot \matr{I} \preceq \matr{\hat{\Sigma}_S}^{-1/2} \matr \Sigma^* \matr{\hat{\Sigma}_S}^{-1/2} \preceq O\left(\frac{1}{\alpha^2}\right) \cdot \matr{I}$,
    \item[$\triangleright$]
      $\norm{\matr{\hat{\Sigma}_S}^{-1} \matr \Sigma^{* 1/2} (\vec{\hat{\mu}}_S - \vec{\mu}^*)}_2 \le O\left(\frac{\log(1/\alpha)}{\alpha^2}\right)
      \quad \mathrm{ and } \quad
      \norm{\matr \Sigma^{* -1} \matr{\hat{\Sigma}_S}^{1/2} (\vec{\hat{\mu}}_S - \vec{\mu}^*)}_2 \le O\left(\frac{\log(1/\alpha)}{\alpha^2}\right)$.
  \end{enumerate}
\end{proposition}

\begin{proof}
    Using the same argument as in the beginning of the proof of Lemma
  \ref{lem:conditional mean closeness} we may assume without loss of
  generality that $(\vec{\mu}^*, \matr{\Sigma}^*) = (\vec 0, \matr I)$ and that
  $\matr{\hat{\Sigma}}_S = \diag(\lambda_1, \dots, \lambda_d)$. Also, for
  simplicity we use $\vec{\mu}$ to denote $\vec{\hat{\mu}}_S$.
  \medskip

  \noindent From parts 1. and 2. of Corollary \ref{corollary:initialization} we
  have that
  \[ \norm{\matr I - \matr \Sigma^{* 1/2} \matr{\hat{\Sigma}_S}^{-1} \matr \Sigma^{* 1/2}}_F =
   \sum_{i} \left(1 - \frac{1}{\lambda_i}\right)^2 \le \frac{1}{\alpha^2} \sum_{i} (1 - \lambda_i)^2 \le O\left(\frac{\log(1/\alpha)}{\alpha^2}\right). \]

  \noindent From parts 1. and 2. of Corollary \ref{corollary:initialization}
  and the fact that the Frobenius norm is an upper bound to the spectral norm
  we have that
  \begin{align} \label{eq:proof:prop:massOfEmpirical:1}
    \Omega(\alpha^2) \cdot \matr{I} \preceq \matr{\Sigma}^{* -1/2} \matr{\hat{\Sigma}_S} \matr \Sigma^{* -1/2} \preceq O\left(\log\frac{1}{\alpha}\right) \cdot \matr{I} \preceq O\left(\frac{1}{\alpha^2}\right) \cdot \matr{I}.
  \end{align}
  \noindent From parts 2. and 3. of Corollary \ref{corollary:initialization} we
  have that
  \[ \norm{\matr{\hat{\Sigma}_S}^{-1} \matr \Sigma^{* 1/2} (\vec{\hat{\mu}}_S - \vec{\mu}^*)}_2 = \sum_{i} \frac{1}{\lambda_i^2}  \mu_i^2 \le O\left(\frac{\log(1/\alpha)}{\alpha^2}\right). \]

  \noindent Similarly from parts 1. and 2. of Corollary
  \ref{corollary:initialization} we have that
  \[ \norm{\matr I - \matr{\hat{\Sigma}_S}^{* 1/2} \matr \Sigma^{* -1} \matr{\hat{\Sigma}_S}^{* 1/2}}_F
   =
   \sum_{i} (1-\lambda_i)^2 \le O\left({\log(1/\alpha)}\right). \]

  \noindent Also \eqref{eq:proof:prop:massOfEmpirical:1} directly implies that
  \[ \Omega(\alpha^2) \cdot \matr{I} \preceq \matr{\hat{\Sigma}_S}^{-1/2} \matr \Sigma^* \matr{\hat{\Sigma}_S}^{-1/2} \preceq O\left(\frac{1}{\alpha^2}\right) \cdot \matr{I}. \]

  \noindent Finally from parts 2. and 3. of Corollary
  \ref{corollary:initialization}, we have that
  \[ \norm{\matr{\hat{\Sigma}_S}^{-1} \matr \Sigma^{* 1/2} (\vec \mu_1 - \vec \mu_2)}_2 = \sum_{i} {\lambda_i^2} \mu_i^2 \le O\left(\frac{\log(1/\alpha)}{\alpha^2}\right). \]
\end{proof}

\noindent Proposition \ref{prop:bounds} implies that Lemma
\ref{lemma:large mass} can be invoked with
$B = O\left(\frac{\log(1/\alpha)}{\alpha^2} \right)$ to obtain the following.

\begin{corollary}\label{corollary:conditional mass}
    Consider a truncated normal distribution
  $\normal(\vec{\mu}^*, \matr{\Sigma}^*, S)$ with
  $\normal(\vec{\mu}^*, \matr{\Sigma}^*; S) \ge \alpha > 0$. The estimates
  $(\hat{\vec{\mu}}_S, \hat{\matr{\Sigma}}_S)$ obtained by Corollary
  \ref{corollary:initialization}, satisfy
  $\normal(\hat{\vec{\mu}}_S, \hat{\matr{\Sigma}}_S; S) \ge c_\alpha$ for some
  constant $c_\alpha$ that depends only on the constant $\alpha$.
\end{corollary}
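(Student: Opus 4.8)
The plan is to invoke Lemma~\ref{lemma:large mass} directly, assigning the ground-truth normal $\normal(\vec{\mu}^*, \matr{\Sigma}^*)$ to the role of $\normal(\vec \mu_1, \matr \Sigma_1)$ and the estimated normal $\normal(\hat{\vec{\mu}}_S, \hat{\matr{\Sigma}}_S)$ to the role of $\normal(\vec \mu_2, \matr \Sigma_2)$. Under this assignment the hypothesis of the corollary, $\normal(\vec{\mu}^*, \matr{\Sigma}^*; S) \ge \alpha$, is precisely the antecedent $\normal(\vec \mu_1, \matr \Sigma_1; S) \ge \alpha$ demanded by the lemma. Hence the entire argument reduces to verifying that the three structural conditions of Lemma~\ref{lemma:large mass} hold for one common value of the parameter $B$, after which the lemma's conclusion is read off verbatim.

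First I would match each condition against the corresponding estimate of Proposition~\ref{prop:bounds}, read with $\matr \Sigma_1 = \matr{\Sigma}^*$ and $\matr \Sigma_2 = \hat{\matr{\Sigma}}_S$. Condition~1 requires $\norm{\matr{I} - \matr{\Sigma}^{*1/2} \hat{\matr{\Sigma}}_S^{-1} \matr{\Sigma}^{*1/2}}_F \le B$, supplied by the first bullet as $O(\log(1/\alpha)/\alpha^2)$; condition~2 requires $1/B \le \norm{\matr{\Sigma}^{*-1/2} \hat{\matr{\Sigma}}_S \matr{\Sigma}^{*-1/2}}_2 \le B$, supplied by the second bullet, which sandwiches the spectral norm between $\Omega(\alpha^2)$ and $O(1/\alpha^2)$; condition~3 requires $\norm{\hat{\matr{\Sigma}}_S^{-1} \matr{\Sigma}^{*1/2} (\vec{\mu}^* - \hat{\vec{\mu}}_S)} \le B$, supplied (up to an irrelevant sign) by the third bullet as $O(\log(1/\alpha)/\alpha^2)$. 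Taking $B = O(\log(1/\alpha)/\alpha^2)$ then satisfies all three simultaneously: this $B$ dominates the Frobenius and displacement bounds as well as the spectral upper bound $O(1/\alpha^2)$, while its reciprocal falls below the spectral lower bound $\Omega(\alpha^2)$ (enlarging the hidden constant in $B$ if needed, since $\log(1/\alpha) \ge 1$). Lemma~\ref{lemma:large mass} then yields $\normal(\hat{\vec{\mu}}_S, \hat{\matr{\Sigma}}_S; S) \ge (\alpha/12)^{30 B^5}$, and I set $c_\alpha = (\alpha/12)^{30 B^5}$; since $B$ depends only on $\alpha$, so does $c_\alpha$.

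The step that deserves the most care—though it is inherited from the earlier results rather than re-established here—is the dimension-independence of $B$. Every bound in Proposition~\ref{prop:bounds} descends from the Frobenius estimate $\norm{\matr{\Sigma}^{-1/2} \hat{\matr{\Sigma}}_S \matr{\Sigma}^{-1/2} - \matr{I}}_F \le O(\log(1/\alpha))$ of Corollary~\ref{corollary:initialization}, which crucially does not grow with $d$; this is exactly what keeps $B$, and therefore $c_\alpha$, from degrading with the dimension and makes it a genuine constant in $\alpha$. Given Proposition~\ref{prop:bounds} and Lemma~\ref{lemma:large mass}, no new concentration estimate is needed, and the corollary follows purely by substitution and bookkeeping of constants.
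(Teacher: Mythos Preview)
Your proposal is correct and mirrors the paper's own argument exactly: the paper states that Proposition~\ref{prop:bounds} supplies the three hypotheses of Lemma~\ref{lemma:large mass} with $B = O\!\left(\frac{\log(1/\alpha)}{\alpha^2}\right)$, and then reads off Corollary~\ref{corollary:conditional mass} as an immediate consequence. Your matching of conditions and the bookkeeping on the constant $c_\alpha$ are precisely what the paper does (implicitly), with the added care of noting why the resulting constant is dimension-free.
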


\begin{corollary}\label{corollary:box mass}
    Consider a truncated normal distribution
  $\normal(\vec{\mu}^*, \matr{\Sigma}^*, S)$ with
  $\normal(\vec{\mu}^*, \matr{\Sigma}^*; S) \ge \alpha > 0$. Let
  $(\hat{\vec{\mu}}_S, \hat{\matr{\Sigma}}_S)$ be the estimate obtained by
  Corollary \ref{corollary:initialization} and let
  $({\vec{\mu}}, {\matr{\Sigma}})$ be any estimate that satisfies
  \begin{itemize}
    \item[$\triangleright$]
      $\norm{\matr I - \matr{\hat{\Sigma}_S}^{1/2} \matr \Sigma^{-1} \matr{\hat{\Sigma}_S}^{1/2}}_F \le O\left(\frac{\log(1/\alpha)}{\alpha^2}\right)$,
    \item[$\triangleright$]
      $\Omega(\alpha^2) \cdot \matr{I} \preceq \matr{\hat{\Sigma}_S}^{1/2} \matr \Sigma^{-1} \matr{\hat{\Sigma}_S}^{1/2} \preceq O\left(\frac{1}{\alpha^2}\right) \cdot \matr{I}$,
    \item[$\triangleright$]
      $\norm{\matr \Sigma^{-1} \matr{\hat{\Sigma}_S}^{1/2} (\vec{\hat{\mu}}_S - \vec{\mu})}_2 \le O(\frac{\log(1/\alpha)}{\alpha^2})$.
  \end{itemize}
  Then, $\normal({\vec{\mu}}, {\matr{\Sigma}}; S) \ge c_\alpha$ for some
  constant $c_\alpha$ that depends only on the constant $\alpha$.
\end{corollary}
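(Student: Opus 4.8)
The plan is to obtain the bound by transferring the non-trivial mass that $S$ already carries under the initialization. By Corollary~\ref{corollary:conditional mass} we already know that $\normal(\hat{\vec\mu}_S, \hat{\matr\Sigma}_S; S) \ge c'_\alpha$ for a constant $c'_\alpha$ depending only on $\alpha$, so it suffices to show that any candidate $(\vec\mu, \matr\Sigma)$ satisfying the three displayed constraints is ``close enough'' to $(\hat{\vec\mu}_S, \hat{\matr\Sigma}_S)$, in the precise sense of Lemma~\ref{lemma:large mass}, that this mass is preserved up to a factor depending only on $\alpha$. Thus the corollary is essentially a second application of the same mass-transfer lemma used to derive Corollary~\ref{corollary:conditional mass}, but now started from the initialization rather than from $(\vec\mu^*, \matr\Sigma^*)$.

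Concretely, I would apply Lemma~\ref{lemma:large mass} with the role assignment $(\vec\mu_1, \matr\Sigma_1) = (\hat{\vec\mu}_S, \hat{\matr\Sigma}_S)$ and $(\vec\mu_2, \matr\Sigma_2) = (\vec\mu, \matr\Sigma)$, taking the radius parameter $B = O(\log(1/\alpha)/\alpha^2)$. With this assignment, hypothesis~1 of the lemma, $\|\matr I - \matr\Sigma_1^{1/2}\matr\Sigma_2^{-1}\matr\Sigma_1^{1/2}\|_F \le B$, is exactly the first constraint of the corollary, and hypothesis~3, $\|\matr\Sigma_2^{-1}\matr\Sigma_1^{1/2}(\vec\mu_1 - \vec\mu_2)\| \le B$, is exactly the third; both match verbatim. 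The conclusion of the lemma then reads $\normal(\vec\mu, \matr\Sigma; S) \ge (c'_\alpha/12)^{30 B^5}$, and since $B$ depends only on $\alpha$, the right-hand side is a positive constant $c_\alpha$ depending only on $\alpha$, which is the desired bound.

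The step that requires care, and the one I expect to be the main obstacle, is matching hypothesis~2 of Lemma~\ref{lemma:large mass}, namely $1/B \le \|\matr\Sigma_1^{-1/2}\matr\Sigma_2\matr\Sigma_1^{-1/2}\|_2 \le B$, to the second constraint of the corollary, which is instead phrased in terms of the \emph{inverse} matrix $\hat{\matr\Sigma}_S^{1/2}\matr\Sigma^{-1}\hat{\matr\Sigma}_S^{1/2}$. Writing $\matr A = \hat{\matr\Sigma}_S^{-1/2}\matr\Sigma\hat{\matr\Sigma}_S^{-1/2}$ with eigenvalues $\lambda_1,\dots,\lambda_d>0$, hypothesis~2 asks for $\max_i\lambda_i \in [1/B, B]$, whereas the second constraint controls $\|\matr A^{-1}\|_2 = 1/\min_i\lambda_i$. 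The lower end $\max_i\lambda_i \ge 1/B$ comes for free, since $\min_i\lambda_i \ge \Omega(\alpha^2) \ge 1/B$. The genuine difficulty is the upper bound $\max_i\lambda_i \le B$: it does not follow from a bound on the largest eigenvalue of $\matr A^{-1}$ alone, and to obtain it one must read the second constraint as the two-sided PSD sandwich $\Omega(\alpha^2)\matr I \preceq \hat{\matr\Sigma}_S^{1/2}\matr\Sigma^{-1}\hat{\matr\Sigma}_S^{1/2} \preceq O(1/\alpha^2)\matr I$ (as the analogous bounds of Proposition~\ref{prop:bounds} and Corollary~\ref{corollary:conditional mass} implicitly are), which confines every $\lambda_i$ to $[\Omega(\alpha^2), O(1/\alpha^2)] \subseteq [1/B, B]$ and validates hypothesis~2. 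Once this eigenvalue control is in place the invocation of Lemma~\ref{lemma:large mass} is immediate, and I would close by remarking that the resulting $c_\alpha = (c'_\alpha/12)^{30B^5}$, though extremely small, is a fixed positive constant that is all we need downstream: it keeps the strong-convexity guarantee of Lemma~\ref{lemma:strong convexity} active uniformly over the feasible set.
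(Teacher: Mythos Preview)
Your proposal is correct and matches the paper's approach: the paper derives both Corollary~\ref{corollary:conditional mass} and Corollary~\ref{corollary:box mass} in one line by asserting that Proposition~\ref{prop:bounds} allows Lemma~\ref{lemma:large mass} to be invoked with $B=O(\log(1/\alpha)/\alpha^2)$, which for Corollary~\ref{corollary:box mass} amounts precisely to your two-step argument (first get mass at $(\hat{\vec\mu}_S,\hat{\matr\Sigma}_S)$ via Corollary~\ref{corollary:conditional mass}, then transfer it to $(\vec\mu,\matr\Sigma)$ via Lemma~\ref{lemma:large mass}). Your careful handling of hypothesis~2 --- noting that the lemma's proof actually uses the two-sided eigenvalue sandwich $1/B<\lambda_i<B$ rather than just a spectral-norm bound, and that the corollary's second constraint should be read the same way --- fills in a detail that the paper leaves implicit.
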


\subsection{Analysis of Stochastic Gradient Descent -- Proof of Theorem \ref{thm:main theorem}} \label{sec:sgd-final}

  As we explained in the introduction, our estimation algorithm is projected
stochastic gradient descent for maximizing the population log-likelihood
function with the careful choice of the projection set. Because the proof of
consistency and efficiency of our algorithm is technical we first present an
outline of the proof and then we present the individual lemmas for each step.

The framework that we use for our analysis is based on the Chapter 14 of
\cite{ShalevS14}. In this framework the goal is to minimize a convex function
$f : \reals^k \to \reals$ by doing gradient steps but with noisy unbiased
estimations of the gradient. For us $k$ is equal to $d^2 + d$ since we need to
estimate the entries of the mean vector and the covariance matrix. The
following algorithm describes projected stochastic gradient descent applied to
a convex function $f$, with projection set $\Domain_r \subseteq \reals^k$. We
will define formally the particular set $\Domain_r$ that we consider in
Definition \ref{def:setaki}. For now $\Domain_r$ should be thought of an
arbitrary convex subset of $\reals^k$.

\begin{algorithm}[H]
\caption*{\textbf{Algorithm ($\star$). Projected SGD for Minimizing a $\boldsymbol{\lambda}$-Strongly Convex Function.}}
\begin{algorithmic}[1]
\State $\vec{w}^{(0)} \gets$ arbitrary point in $\Domain_r$ \Comment{(a) \textit{initial feasible point}}
\For{$i = 1, \dots, M$}
  \State Sample $\vec{v}^{(i)}$ such that $\Exp\left[ \vec{v}^{(i)} \mid \vec{w}^{(i - 1)}\right] \in \partial f(\vec{w}^{(i - 1)})$ \Comment{(b) \textit{estimation of gradient}}
  \State $\vec{r}^{(i)} \gets \vec{w}^{(i - 1)} - \frac{1}{\lambda \cdot i} \vec{v}^{(i)}$   \State $\vec{w}^{(i)} \gets \argmin_{\vec{w} \in \Domain_r} \norm{\vec{w} - \vec{r}^{(i)}}$ \Comment{(c) \textit{projection step}}
\EndFor
\State \textbf{return} $\bar{\vec{w}} \gets \frac{1}{M} \sum_{i = 1}^M \vec{w}^{(i)}$
\end{algorithmic}
\label{alg:projectedSGDGeneral}
\end{algorithm}

  Our goal is to apply the Algorithm
\hyperref[alg:projectedSGDGeneral]{($\star$)} to the negative log-likelihood
function that we defined in \eqref{eq:logLikelihoodPopulation}. In order to
apply Algorithm \hyperref[alg:projectedSGDGeneral]{($\star$)} we have to first
solve the following three algorithmic problems
\begin{enumerate}
  \item[(a)] \textbf{initial feasible point:} efficiently compute an initial
    feasible point in $\Domain_r$,
  \item[(b)] \textbf{unbiased gradient estimation:} efficiently sample an
    unbiased estimation of $\nabla f$,
  \item[(c)] \textbf{efficient projection:} design an efficient algorithm to
    project to the set $\Domain_r$.
\end{enumerate}
Then our goal is to apply the following theorem of \cite{ShalevS14}.

\begin{theorem}[Theorem 14.11 of \cite{ShalevS14}.] \label{thm:mainSGDAnalysis}
    Let $f : \reals^k \to \reals$ be a convex function and
  $\vec{w}^{(1)}, \dots, \vec{w}^{(M)}$ be the sequence produced by Algorithm
  \hyperref[alg:projectedSGDGeneral]{($\star$)}, where
  $\vec{v}^{(1)}, \dots, \vec{v}^{(M)}$ is a sequence of random vectors such
  that
  $\Exp\left[ \vec{v}^{(t)} \mid \vec{w}^{(t - 1)}\right] = \nabla f(\vec{w}^{(t - 1)})$
  for all $t \in [M]$, and let
  $\vec{w}^* = \arg \min_{\vec{w} \in \Domain_r} f(\vec{w})$ be a minimizer
  of $f$. If we assume the following
\begin{enumerate}
  \item[\emph{(i)}] \emph{\textbf{bounded variance step:}}
    $\Exp\left[ \norm{\vec{v}^{(t)}}_2^2 \right] \le \rho^2$,
  \item[\emph{(ii)}] \emph{\textbf{strong convexity:}} for all $i \in [n]$,
  the function $f_i(\cdot)$ is $\lambda$-strongly convex,
\end{enumerate}
\noindent then,
$\Exp\left[ f(\vec{\bar{w}}) \right] - f(\vec{w}^*) \le \frac{\rho^2}{2 \lambda M}\left( 1 + \log(M) \right),$
where $\bar{\vec{w}}$ is the output of the Algorithm
\hyperref[alg:projectedSGDGeneral]{($\star$)}.
\end{theorem}

\noindent Unfortunately, none of the properties (i), (ii) hold for all
vectors $\vec{w} \in \reals^d$ and hence we cannot use vanilla SGD. For this
reason, we add the projection step. We identify a projection set
$\Domain_{r^*}$ such that log-likelihood satisfies both (i) and (ii) for all
vectors $\vec{w} \in \Domain_{r^*}$. We to describe the domain $\Domain_{r^*}$
of our projected stochastic gradient ascent in the transformed space where we
have applied an affine transformation such that $\hat{\vec{\mu}}_S = \vec{0}$
and $\hat{\matr{\Sigma}}_S = \matr{I}$. The domain is parameterized by the
positive number $r$ and is defined as follows

\begin{definition}[\textsc{Projection Set}] \label{def:setaki}
We define
\begin{equation} \label{eq:domainDefinition}
\Domain_r = \left\{ (\vec{\nu}, \matr{T}) ~\left|~
                     \norm{\vec{\nu}}_2 \le r,~
                     \norm{\matr{I} - \matr{T}}_F \le r,~
                     \norm{\matr{T}^{-1}}_2 \le r
                  \right. \right\}.
\end{equation}
We set $r^* = O(\log(1/\alpha)/\alpha^2)$. Given $\vec{x} \in \reals^k$ we say
that $\vec{x}$ is \textbf{feasible} if and only if $\vec{x} \in \Domain_{r^*}$.
\end{definition}

\noindent Observe that $\Domain_r$ is a convex set as the constraint
$\norm{\matr{T}^{-1}}_2 \le r$ an infinite set of linear, with respect to
$\matr{T}$, constraints of the form $\vec{x}^T \matr{T} \matr{x} \ge 1/r$ for
all $\vec{x} \in \reals^d$. Moreover, in Algorithm \ref{alg:projectToDomain} we
present an efficient procedure to project any point in our space to $\Domain_r$.
Using the projection to $\Domain_{r^*}$, we can prove (i) and (ii) and hence we
can apply Theorem \ref{thm:mainSGDAnalysis}. The last step is to transform the
conclusions of Theorem \ref{thm:mainSGDAnalysis} to guarantees in the parameter
space. For this we use again the strong convexity of $f$ which implies that
closeness in the objective value translates to closeness in the parameter space.
For this argument we also need the following property:
\begin{enumerate}
  \item[(iii)] \textbf{feasibility of optimal solution:}
    $\vec{w}^* \in \Domain_{r^*}$.
\end{enumerate}

\paragr{Outline of the Proof of Theorem \ref{thm:main theorem}.}
The proof proceeds by solving the aforementioned algorithmic problems (a) - (c)
and the statistical problems (i) - (iii).
\begin{Enumerate}
  \item In Section \ref{sec:sgd:initialization} we describe our initialization
    step which solves problem (a).
  \item We start in Section \ref{sec:sgd:gradient} we describe the rejection
    sampling algorithm to get an unbiased estimate of the gradient which solves
    the problem (b).
  \item In Section \ref{sec:sgd:projection} we present a detailed analysis of
    our projection onto the set $\Domain_r$ algorithm which gives a solution to
    algorithmic problem (c).
  \item In Section \ref{sec:sgd:statistical} we present a proof of the (i)
    bounded variance property. For the property (ii) strong convexity we use
    Lemma \ref{lem:strongConcavityofLogLikelihoodWithTruncation} from Section
    \ref{sec:strong-convexity}.
  \item The feasibility of the optimal solution follows directly from Corollary
    \ref{corollary:initialization} which resolves the problem (iii).
  \item Finally in Section \ref{sec:sgd:mainResultProof} we use all the
    mentioned results to prove our main Theorem \ref{thm:main theorem}.
\end{Enumerate}
\medskip

\noindent We define $\vec{t} = \matr{T}^{\flat}$ and
$\vec{w} = \begin{bmatrix} \vec{t} \\ \vec{v} \end{bmatrix}$ for simplicity.
Our algorithm iterates over the estimation $\vec{w}$ of the true parameters
$\vec{w}^*$. Let also $\samplO$ the sample oracle from the unknown distribution
$\normal(\vec{\mu}^*, \matr{\Sigma}^*, S)$.

\subsubsection{Initialization Step} \label{sec:sgd:initialization}

  The initialization step of our algorithm computes the empirical mean
$\hat{\vec{\mu}}$ and the empirical covariance matrix $\hat{\matr{\Sigma}}$
of the truncated distribution $\normal(\vec{\mu}^*, \matr{\Sigma}^*, S)$ using
$n = \tilde{O}\left( \frac{d^2}{\eps^2} \right)$ samples
$\vec{x}_1, \dots, \vec{x}_n$.
\begin{equation} \label{eq:empiricalMeanAndCovarianceEstimation}
\hat{\vec{\mu}}_S = \frac{1}{n} \sum_{i = 1}^n \vec{x}_i, ~~~~~~~~~
\hat{\matr{\Sigma}}_S = \frac{1}{n} \sum_{i = 1}^n \left( \vec{x}_i -
\hat{\vec{\mu}}_S \right) \left( \vec{x}_i - \hat{\vec{\mu}}_S \right)^T.
\end{equation}
\noindent Then, we apply the affine transformation in our basis so that
$\hat{\vec{\mu}}_S = \vec{0}$ and $\hat{\matr{\Sigma}}_S = \matr{I}$. Then we
set our initial estimate
$\vec{w}^{(0)} = \begin{bmatrix} \matr{I}^{\flat} \\ \vec{0} \end{bmatrix}$.

\subsubsection{Unbiased Estimation of the Gradient} \label{sec:sgd:gradient}

  From Section \ref{sec:strong-convexity} given some parameters
$(\vec{\nu}, \matr{T})$ the expected gradient of the population log-likelihood
function is equal to
\[ - \Exp_{\vec{x} \sim \normal(\vec{\mu^*}, \matr{\Sigma^*}, S)}
  \left[
    \begin{bmatrix}
      \left( - \frac{1}{2} \vec{x} \vec{x}^T \right)^{\flat} \\
      \vec{x}
    \end{bmatrix}
  \right] +
  \Exp_{\vec{z} \sim \normal(\matr{T}^{-1} \vec{\nu}, \matr{T}^{-1}, S)}
  \left[
    \begin{bmatrix}
      \left( - \frac{1}{2} \vec{z} \vec{z}^T \right)^{\flat} \\
      \vec{z}
    \end{bmatrix}
  \right]. \]
\noindent To compute an unbiased estimate of the first term we use one of the
samples provided by the oracle $\samplO$. To compute an unbiased estimate of
the second term we sample a vector $\vec{z}$ from
$\normal(\matr{T}^{-1} \vec{\nu}, \matr{T}^{-1})$ and we check if
$\vec{z} \in S$ using $\memb_S$. If $\vec{z}$ is in $S$ then we use it to get
an unbiased estimate of the second term of the gradient, otherwise we repeat
the rejection sampling procedure until we succeed. From the definition of the
projection set $\Domain_r$ and from Corollary \ref{corollary:box mass} we have
that this rejection sampling procedure takes only $O(1/c_{\alpha})$ samples,
with high probability, where $c_{\alpha}$ is the constant from Corollary
\ref{corollary:box mass}.

\subsubsection{Projection Algorithm} \label{sec:sgd:projection}

  The next Lemma \ref{lem:projectionProcedure} shows gives the missing details
and proves the correctness of Algorithm \ref{alg:projectToDomain}.

\begin{lemma} \label{lem:projectionProcedure}\label{lemma:efficient projection}
  Given $(\vec{\nu}', \matr{T}')$, there exists an efficient algorithm that
solves the following problem which corresponds to projecting
$(\vec{\nu}, \matr{T})$ to the set $\Domain_r$
\[ \argmin_{(\vec{\nu}, \matr{T}) \in \Domain_r
} \norm{\vec{\nu} - \vec{\nu}'}_2^2 + \norm{\matr{T} - \matr{T}'}_F^2. \]
\end{lemma}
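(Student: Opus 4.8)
The plan is to exploit the fact that both the objective and the constraint set defining $\Domain_r$ split across the two blocks $\vec{\nu}$ and $\matr{T}$, and that the $\matr{T}$-block is \emph{spectral}, i.e.\ invariant under orthogonal conjugation and dependent on $\matr{T}$ only through its eigenvalues. Since $\norm{\vec{\nu}-\vec{\nu}'}_2^2$ and $\norm{\matr{T}-\matr{T}'}_F^2$ involve disjoint variables, and the three constraints decouple ($\norm{\vec{\nu}}_2\le r$ constrains only $\vec{\nu}$, while $\norm{\matr{I}-\matr{T}}_F\le r$ and $\norm{\matr{T}^{-1}}_2\le r$ constrain only $\matr{T}$), the projection factorizes. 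The $\vec{\nu}$-part is a projection onto a Euclidean ball with closed form $\vec{\nu}=\vec{\nu}'\min\{1,\,r/\norm{\vec{\nu}'}_2\}$. For the $\matr{T}$-part I would first note that $\Domain_r\subseteq\symm_d\times\reals^d$ (the iterate $\matr{T}$ stays symmetric, since its gradient block is symmetric), so by the Pythagorean decomposition of the Frobenius norm into symmetric and skew parts I may replace $\matr{T}'$ by its symmetric part $\tfrac12\big(\matr{T}'+(\matr{T}')^T\big)$ without loss of generality.

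The heart of the argument is to reduce the $\matr{T}$-projection to a projection on eigenvalues. Write the symmetric $\matr{T}'=\matr{Q}\diag(\mu_1,\dots,\mu_d)\matr{Q}^T$. Both constraints $\matr{T}\succeq (1/r)\matr{I}$ (which is equivalent to $\norm{\matr{T}^{-1}}_2\le r$ for symmetric $\matr{T}$) and $\norm{\matr{T}-\matr{I}}_F\le r$ are invariant under $\matr{T}\mapsto\matr{O}\matr{T}\matr{O}^T$ for orthogonal $\matr{O}$, and depend on $\matr{T}$ only through its spectrum: the first says $\lambda_{\min}\ge 1/r$, and $\norm{\matr{T}-\matr{I}}_F^2=\sum_i(\lambda_i-1)^2$. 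Hence for any feasible $\matr{T}$ with eigenvalues $\lambda$, the matrix $\hat{\matr{T}}=\matr{Q}\diag(\lambda^{\downarrow})\matr{Q}^T$ obtained by pasting the sorted eigenvalues of $\matr{T}$ onto the eigenvectors of $\matr{T}'$ remains feasible, and by von Neumann's trace inequality $\langle\matr{T},\matr{T}'\rangle\le\sum_i\lambda_i^{\downarrow}\mu_i^{\downarrow}=\langle\hat{\matr{T}},\matr{T}'\rangle$, so $\norm{\hat{\matr{T}}-\matr{T}'}_F\le\norm{\matr{T}-\matr{T}'}_F$. Thus the optimum is attained by some $\matr{T}=\matr{Q}\diag(\lambda)\matr{Q}^T$, and the problem collapses to the $d$-dimensional convex program of projecting $\mu$ onto $\{\lambda\in\reals^d : \lambda_i\ge 1/r,\ \norm{\lambda-\vec{1}}_2\le r\}$.

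Finally I would solve this eigenvalue projection by dualizing the single ball constraint. Introducing a multiplier $\eta\ge 0$ for $\norm{\lambda-\vec{1}}_2\le r$, the KKT stationarity conditions decouple across coordinates and yield, for fixed $\eta$, the closed form $\lambda_i(\eta)=\max\!\big(\tfrac{\mu_i+\eta}{1+\eta},\,\tfrac1r\big)$, where the $\max$ enforces the box constraint $\lambda_i\ge 1/r$ through its own multiplier. The map $\eta\mapsto\norm{\lambda(\eta)-\vec{1}}_2$ is monotonically non-increasing (each coordinate is driven toward $1$ as $\eta$ grows), so either $\eta=0$ already satisfies the ball constraint, or a binary search locates the unique $\eta>0$ with $\norm{\lambda(\eta)-\vec{1}}_2=r$; provided $r\ge 1$ (as holds, since $\Domain_r$ is chosen to contain the identity), $\vec{1}$ is feasible and a solution exists. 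The whole procedure costs one eigendecomposition of $\matr{T}'$ in $O(d^3)$ time plus a one-dimensional search, hence is efficient. The step I expect to be the main obstacle is the spectral reduction of the previous paragraph: establishing rigorously, via von Neumann's inequality together with the orthogonal invariance of the feasible set, that the projection shares the eigenbasis of $\matr{T}'$ is exactly what turns a semidefinite projection over full matrices into a one-dimensional eigenvalue search.
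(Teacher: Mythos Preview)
Your proposal is correct and follows essentially the same route as the paper: decouple $\vec{\nu}$ from $\matr{T}$, handle $\vec{\nu}$ by the closed-form ball projection, and for $\matr{T}$ reduce to a spectral problem, introduce a single Lagrange multiplier for the Frobenius-ball constraint, and binary-search over it. The only cosmetic difference is the order of the last two steps---the paper first introduces the multiplier and then diagonalizes the resulting target matrix via an SVD, whereas you first pass to eigenvalues via von Neumann's trace inequality and then dualize---but the computations and the final per-coordinate closed form $\lambda_i(\eta)=\max\!\big((\mu_i+\eta)/(1+\eta),\,1/r\big)$ coincide.
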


\begin{proof}
    Because of the form of $\Domain_r$ and the objective function of our
  program, observe that we can project $\vec{\nu}$ and $\matr{T}$ separetely.
  The projection of $\vec{\nu}'$ to $\Domain_r$ is the solution of the following
  problem $\arg \min_{\vec{\nu} : \norm{\vec{\nu}}_2 \le r}
  \norm{\vec{\nu} - \vec{\nu}'}_2^2$ which has a closed form. So we focus on
  the projection of $\matr{T}$.

  \noindent To project $\matr{T}$ to $\Domain_r$ we need to solve the following
  program.
  \begin{align*}
    \argmin_{\matr{T}} & \norm{\matr{T} - \matr{T}'}_F^2 \\
  \text{s.t.} ~~~        & \norm{\matr{T} - \matr{I}}_F^2 \le r^2 \\
                         & \matr{T} \succeq \frac{1}{r} \matr{I}
  \end{align*}
  \noindent Equivalently, we can perform binary search over the Lagrange multiplier
  $\lambda$ and at each step solve the following program.
  \begin{align*}
    \arg \min_{\matr{T}} & \norm{\matr{T} - \matr{T}'}_F^2 + \lambda \norm{\matr{T} - \matr{I}}_F^2 \\
  \text{s.t.} ~~~        & \matr{T} \succeq \frac{1}{r} \matr{I}
  \end{align*}
  \noindent After completing the squares on the objective, we can set
  $\matr{H} = \matr{I} - \frac{1}{1 + \lambda} \left( \matr{T}' + \lambda \matr{I} \right)$
  and make the change of variables $\matr{R} = \matr{I} - \matr{T}$ and solve
  the program.
  \begin{align*}
    \arg \min_{\matr{T}} & \norm{\matr{R} - \matr{H}}_F^2 \\
  \text{s.t.} ~~~        & \matr{R} \preceq \left( 1 - \frac{1}{r} \right) \matr{I}
  \end{align*}
  \noindent Observe now that without loss of generality $\matr{H}$ is diagonal.
  If this is not the case we can compute the singular value decomposition of
  $\matr{H}$ and change the base of the space so that $\matr{H}$ is diagonal.
  Then, after finding the answer to this case we transform back the space to
  get the correct $\matr{R}$. When $\matr{H}$ is diagonal the solution to this
  problem is very easy and it even has a closed form.
\end{proof}

\subsubsection{Bound on the Variance of Gradient Estimation} \label{sec:sgd:statistical}

\noindent As we explained before, apart from efficient projection
(Lemma \ref{lem:projectionProcedure}) and strong convexity
(Lemma \ref{lem:strongConcavityofLogLikelihoodWithTruncation}) we also need a
bound on the square of the norm of the gradient vector in order to prove
theoretical guarantees for our SGD algorithm.

\begin{lemma} \label{lem:normOfStochasticGradient}\label{lemma:unbiased estimate of sample}
    Let $\vec{v}^{(i)}$ the gradient of the log likelihood function at step
  $i$ as computed in the line 6 of Algorithm \ref{alg:projectedSGD}. Let
  $\vec{\nu}$, $\matr{T}$ be the guesses of the parameters after step $i - 1$
  according to which the gradient is computed with
  $\vec{\mu} = \matr{T}^{-1} \vec{\nu}$ and
  $\matr{\Sigma} = \matr{T}^{-1}$. Let $\vec{\mu}^*$, $\matr{\Sigma}^*$ be
  the parameters we want to recover, with $\vec{\nu}^* = \matr{\Sigma}^{* -1}
  \vec{\mu}^*$ and $\matr{T}^* = \matr{\Sigma}^{* -1}$. Define We assume that
  $(\vec{\nu}, \matr{T}) \in \Domain_r$,
  $(\vec{\nu}^*, \matr{T}^*) \in \Domain_r$ and that
  $\normal(\vec{\mu}, \matr{\Sigma}; S) \ge \beta$,
  $\normal(\vec{\mu}^*, \matr{\Sigma}^*; S) \ge \beta$. Then, we have that
  \[ \Exp\left[ \norm{\vec{v}^{(i)}}_2^2 \right] \le \frac{100}{\beta} d^2 r^2. \]
\end{lemma}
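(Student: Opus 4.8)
The plan is to unpack the stochastic gradient $\vec{v}^{(i)}$ from its definition in \eqref{eq:gradientofLikelihoodOneSample}: it is the difference of two vectors of the form $\vec{g}(\vec{u}) = \bigl( (-\tfrac{1}{2}\vec{u}\vec{u}^T)^{\flat},\ \vec{u} \bigr)^{T}$, where the first uses a fresh truncated sample $\vec{x} \sim \normal(\vec{\mu}^*, \matr{\Sigma}^*, S)$ and the second uses a rejection sample $\vec{z} \sim \normal(\vec{\mu}, \matr{\Sigma}, S)$ drawn at the current parameters. Since $\norm{\vec{a} - \vec{b}}_2^2 \le 2\norm{\vec{a}}_2^2 + 2\norm{\vec{b}}_2^2$, it suffices to bound $\Exp[\norm{\vec{g}(\vec{x})}_2^2]$ and $\Exp[\norm{\vec{g}(\vec{z})}_2^2]$ separately. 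For any $\vec{u}$ we have $\norm{\vec{g}(\vec{u})}_2^2 = \tfrac14 \norm{\vec{u}\vec{u}^T}_F^2 + \norm{\vec{u}}_2^2 = \tfrac14 \norm{\vec{u}}_2^4 + \norm{\vec{u}}_2^2$, so the whole problem reduces to controlling the second and fourth moments $\Exp[\norm{\vec{u}}_2^2]$ and $\Exp[\norm{\vec{u}}_2^4]$ of the two truncated Gaussians.

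Next I would remove the truncation. For any measurable $h \ge 0$ and any Gaussian with $\normal(\vec{\mu}, \matr{\Sigma}; S) \ge \beta$, we have $\Exp_{\vec{u} \sim \normal(\vec{\mu}, \matr{\Sigma}, S)}[h(\vec{u})] = \tfrac{1}{\normal(\vec{\mu}, \matr{\Sigma}; S)} \Exp_{\vec{u} \sim \normal(\vec{\mu}, \matr{\Sigma})}[\vec{1}_{\vec{u} \in S}\, h(\vec{u})] \le \tfrac{1}{\beta} \Exp_{\vec{u} \sim \normal(\vec{\mu}, \matr{\Sigma})}[h(\vec{u})]$. Applying this with $h = \norm{\cdot}_2^4$ and $h = \norm{\cdot}_2^2$ reduces everything to the moments of the \emph{untruncated} Gaussians $\normal(\vec{\mu}^*, \matr{\Sigma}^*)$ and $\normal(\vec{\mu}, \matr{\Sigma})$, at the cost of the factor $1/\beta$. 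This is exactly where the invariant $\normal(\vec{\mu}, \matr{\Sigma}; S) \ge \beta$ (maintained by projecting onto $\Domain_r$, cf.\ Corollary~\ref{corollary:box mass}) is used.

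Finally I would bound the untruncated moments using membership in $\Domain_r$. For a Gaussian $\normal(\vec{\mu}, \matr{\Sigma})$ the standard identities give $\Exp[\norm{\vec{u}}_2^2] = \mathrm{tr}(\matr{\Sigma}) + \norm{\vec{\mu}}_2^2$ and $\Exp[\norm{\vec{u}}_2^4] = (\mathrm{tr}(\matr{\Sigma}) + \norm{\vec{\mu}}_2^2)^2 + 2\,\mathrm{tr}(\matr{\Sigma}^2) + 4\,\vec{\mu}^T \matr{\Sigma} \vec{\mu}$. The constraint $\norm{\matr{T}^{-1}}_2 \le r$ means every eigenvalue of $\matr{\Sigma} = \matr{T}^{-1}$ is at most $r$, so $\mathrm{tr}(\matr{\Sigma}) \le dr$ and $\mathrm{tr}(\matr{\Sigma}^2) \le dr^2$; and $\norm{\vec{\mu}}_2 = \norm{\matr{T}^{-1}\vec{\nu}}_2 \le \norm{\matr{T}^{-1}}_2 \norm{\vec{\nu}}_2 \le r^2$. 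The dominant contribution is the term $(\mathrm{tr}\,\matr{\Sigma})^2 \le d^2 r^2$, while the remaining summands are of lower order in $d$ and absorbed into the constant, so $\Exp[\norm{\vec{u}}_2^4] = O(d^2 r^2)$. Since $(\vec{\nu}^*, \matr{T}^*) \in \Domain_r$ and $\normal(\vec{\mu}^*, \matr{\Sigma}^*; S) \ge \beta$ as well, the \emph{same} bound applies verbatim to the $\vec{x}$-term, and combining the two through the first paragraph produces $\Exp[\norm{\vec{v}^{(i)}}_2^2] \le \tfrac{100}{\beta} d^2 r^2$.

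The routine but slightly delicate part is this last step: one must verify the Gaussian fourth-moment formula and then check that each summand is controlled by the three defining inequalities of $\Domain_r$, so that the $d^2 r^2$ term indeed dominates and the absolute constant comes out below $100$. The conceptual crux, however, is the innocuous-looking truncation-removal inequality of the second paragraph, since it is precisely what makes it essential that SGD never leaves the region of parameters assigning mass at least $\beta$ to $S$.
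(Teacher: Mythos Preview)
Your proposal is correct and follows essentially the same route as the paper: split $\norm{\vec{v}^{(i)}}_2^2$ into the two feature vectors via $\norm{\vec{a}-\vec{b}}^2 \le c(\norm{\vec{a}}^2+\norm{\vec{b}}^2)$, pass from the truncated to the untruncated Gaussian at the cost of a $1/\beta$ factor, compute the Gaussian second and fourth moments explicitly, and bound each piece using the three defining inequalities of $\Domain_r$. The only cosmetic differences are that the paper uses the constant $3$ rather than $2$ in the splitting step and expands the fourth moment through the diagonal vector $\vec{\rho}$ instead of the trace identity you use.
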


\begin{proof}
  Let $\vec{\mu} = \matr{T}^{-1} \vec{\nu}$ and
$\matr{\Sigma} = \matr{T}^{-1}$. According to Algorithm
\ref{alg:estimateGradient} and equation
\eqref{eq:gradientofLikelihoodOneSample} we have that
\begin{align*}
  \Exp\left[ \norm{\vec{v}^{(i)}}_2^2 \right] & =
  \Exp_{\vec{x} \sim \normal(\vec{\mu}^*, \matr{\Sigma}^*, S)}\left[
    \Exp_{\vec{y} \sim \normal(\vec{\mu}, \matr{\Sigma}, S)}\left[
      \norm{
        \begin{bmatrix} \left( - \frac{1}{2} \vec{x} \vec{x}^T \right)^{\flat} \\
                        \vec{x} \end{bmatrix}
     -  \begin{bmatrix} \left( - \frac{1}{2} \vec{y} \vec{y}^T \right)^{\flat} \\
                        \vec{y} \end{bmatrix}}_2^2
    \right]
  \right] \\
      & \le 3 \Exp_{\vec{x} \sim \normal(\vec{\mu}^*, \matr{\Sigma}^*, S)}\left[
          \norm{\begin{bmatrix} \left( - \frac{1}{2} \vec{x} \vec{x}^T \right)^{\flat} \\ \vec{x} \end{bmatrix}
          }_2^2
      \right] +
         3 \Exp_{\vec{y} \sim \normal(\vec{\mu}, \matr{\Sigma}, S)}\left[
           \norm{\begin{bmatrix} \left( - \frac{1}{2} \vec{y} \vec{y}^T \right)^{\flat} \\ \vec{y} \end{bmatrix}
           }_2^2
         \right].
\end{align*}
\noindent In order to bound each of these terms we use two facts: (1) that
the parameters $(\vec{\mu}^*, \matr{\Sigma}^*)$, $(\vec{\mu}, \matr{\Sigma})$
belong in $\Domain_r$, (2) the measure of $S$ is greater than $\beta$ for both
sets of parameters, i.e. $\normal(\vec{\mu}, \matr{\Sigma}; S) \ge \beta$ and
$\normal(\vec{\mu}^*, \matr{\Sigma}^*; S) \ge \beta$. Hence, we will show how
to get an upper bound for the second term and the upper bound of the first term
follows the same way.

\begin{align*}
  \Exp_{\vec{x} \sim \normal(\vec{\mu}, \matr{\Sigma}, S)}\left[
      \norm{\begin{bmatrix} \left( - \frac{1}{2} \vec{x} \vec{x}^T \right)^{\flat} \\ \vec{x} \end{bmatrix}
      }_2^2
  \right] & = \frac{1}{2} \Exp_{\vec{x} \sim \normal(\vec{\mu}, \matr{\Sigma}, S)}\left[
      \norm{\vec{x} \vec{x}^T}_F^2
  \right] + \Exp_{\vec{x} \sim \normal(\vec{\mu}, \matr{\Sigma}, S)}\left[ \norm{\vec{x}}_2^2 \right] \\
          & \le \frac{1}{2} \frac{1}{\beta} \Exp_{\vec{x} \sim \normal(\vec{\mu}, \matr{\Sigma})}\left[
              \norm{\vec{x} \vec{x}^T}_F^2
          \right] + \frac{1}{\beta} \Exp_{\vec{x} \sim \normal(\vec{\mu}, \matr{\Sigma})}\left[ \norm{\vec{x}}_2^2 \right]
\end{align*}

\noindent We define $\vec{\rho} = \begin{bmatrix} \sigma_{1 1} & \sigma_{2 2} & \cdots & \sigma_{d d} \end{bmatrix}^T$,
by straightforward calculations of the last two expressions we get that

\begin{align*}
  \Exp_{\vec{x} \sim \normal(\vec{\mu}, \matr{\Sigma}, S)}\left[
      \norm{\begin{bmatrix} \left( - \frac{1}{2} \vec{x} \vec{x}^T \right)^{\flat} \\ \vec{x} \end{bmatrix}
      }_2^2
  \right] & \le \frac{2}{\beta} \left(\norm{\matr{\Sigma}}_F^2
                + \norm{\vec{\rho}}_2^4 + \norm{\vec{\rho}}_2^2 \norm{\vec{\mu}}_2^2 +
                2 \vec{\mu}^T \matr{\Sigma} \vec{\mu} + \norm{\vec{\mu}}_2^4 + \norm{\vec{\rho}}_2^2 +
                \norm{\vec{\mu}}_2^2\right)
\end{align*}
\noindent But from the fact that $(\vec{\nu}, \matr{T}) \in \Domain_r$ we can
get the following bounds
\[ \norm{\matr{\Sigma}}_F^2 \le d \cdot r,
   ~~~ \norm{\vec{\rho}}_2^2 \le d \cdot r,
   ~~~ \vec{\mu}^T \matr{\Sigma} \vec{\mu} \le r,
   ~~~ \norm{\vec{\mu}}_2^2 \le r
\]

\noindent From which we get that
\begin{align*}
  \Exp_{\vec{x} \sim \normal(\vec{\mu}, \matr{\Sigma}, S)}\left[
      \norm{\begin{bmatrix} \left( - \frac{1}{2} \vec{x} \vec{x}^T \right)^{\flat} \\ \vec{x} \end{bmatrix}
      }_2^2
  \right] & \le \frac{16}{\beta} \cdot d^2 \cdot r^2
\end{align*}
\noindent Finallly, we apply these bounds to the first bound for
$\Exp\left[ \norm{\vec{v}^{(i)}}_2^2 \right]$ and the lemma follows.
\end{proof}

\subsubsection{Proof of Theorem \ref{thm:main theorem}} \label{sec:sgd:mainResultProof}

\noindent Now we have all the ingredients to use the basic tools for analysis
of projected stochastic gradient descent. As we already explained, the
formulation we use is from Chapter 14 of \cite{ShalevS14}.

\noindent Before stating the proof we present a simple lemma for strongly convex
functions that follows easily from the definition of strong convexity.

\begin{lemma}[Lemma 13.5 of \cite{ShalevS14}.] \label{lem:strongConvexityToDistance}
  If $f$ is $\lambda$-strongly convex and $\vec{w}^*$ is a minimizer of $f$,
then, for any $\vec{w}$ it holds that
\[ f(\vec{w}) - f(\vec{w}^*) \ge \frac{\lambda}{2} \norm{\vec{w} - \vec{w}^*}_2^2. \]
\end{lemma}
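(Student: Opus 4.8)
The plan is to reduce the statement to the standard first-order characterization of strong convexity and then invoke the optimality of $\vec{w}^*$. Since the paper defines $\lambda$-strong convexity through the Hessian condition $\Hessian_{f} \succeq \lambda \matr{I}$, the first step is to translate this second-order condition into the inequality
\[
 f(\vec{w}) \ge f(\vec{w}^*) + \langle \nabla f(\vec{w}^*), \vec{w} - \vec{w}^* \rangle + \frac{\lambda}{2} \norm{\vec{w} - \vec{w}^*}_2^2 ,
\]
which I would obtain by a one-dimensional Taylor expansion. Concretely, define $\phi(t) = f(\vec{w}^* + t(\vec{w} - \vec{w}^*))$ for $t \in [0, 1]$, so that $\phi''(t) = (\vec{w} - \vec{w}^*)^T \Hessian_{f}(\vec{w}^* + t(\vec{w} - \vec{w}^*)) (\vec{w} - \vec{w}^*) \ge \lambda \norm{\vec{w} - \vec{w}^*}_2^2$ by the Hessian bound. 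Writing $f(\vec{w}) = \phi(1)$ via the integral form of Taylor's theorem, $\phi(1) = \phi(0) + \phi'(0) + \int_0^1 (1 - t) \phi''(t)\, dt$, and using $\int_0^1 (1 - t)\, dt = 1/2$ together with the lower bound on $\phi''$ yields exactly the displayed first-order inequality.

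Second, I would use the optimality of $\vec{w}^*$ to discard the linear term. If the minimization is unconstrained then $\nabla f(\vec{w}^*) = \vec{0}$ and the term vanishes outright. Since in our application $\vec{w}^*$ is the minimizer over the convex domain $\Domain_r$, I would instead invoke the first-order (variational inequality) optimality condition for a convex constrained problem, namely $\langle \nabla f(\vec{w}^*), \vec{w} - \vec{w}^* \rangle \ge 0$ for every feasible $\vec{w} \in \Domain_r$. Either way the linear term is non-negative, so dropping it only weakens the inequality and leaves
\[
 f(\vec{w}) - f(\vec{w}^*) \ge \frac{\lambda}{2} \norm{\vec{w} - \vec{w}^*}_2^2 ,
\]
as claimed.

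The calculation is entirely routine; the only point that requires care is the role of the constraint set. The main subtlety is ensuring that the linear term $\langle \nabla f(\vec{w}^*), \vec{w} - \vec{w}^* \rangle$ is correctly handled: it is exactly zero for an interior minimizer but only guaranteed non-negative for a boundary minimizer of the constrained problem, and it is precisely the convexity of $\Domain_r$ (established after \eqref{eq:domainDefinition}) that justifies the variational inequality. Since non-negativity is all that the argument needs, the proof goes through uniformly in both cases.
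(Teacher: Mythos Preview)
Your argument is correct and is exactly the standard derivation. The paper does not actually prove this lemma: it merely quotes it from \cite{ShalevS14} with the remark that it ``follows easily from the definition of strong convexity,'' so your Taylor-expansion plus first-order optimality argument is precisely the intended route and nothing more is needed.
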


\noindent Using Theorem \ref{thm:mainSGDAnalysis}, together with Lemmata
\ref{lem:strongConcavityofLogLikelihoodWithTruncation},
\ref{lem:normOfStochasticGradient} and \ref{lem:strongConvexityToDistance}
we can get our first theorem that bounds the expected cost of Algorithm
\ref{alg:projectedSGD}. Then we can also use Markov's inequality to get
and our first result in probability.

\begin{lemma} \label{lem:expectedPerformanceSGD}
    Let $\vec{\mu}^*$, $\matr{\Sigma}^*$ be the underline parameters of our
  model, $f = - \bar{\ell}$,
  $r^* = O\left( \frac{\log(1/\alpha)}{\alpha^2} \right)$ and also
  \[ \beta_* = \min_{(\vec{\nu}, \matr{T}) \in \Domain_r} \normal(\matr{T}^{-1}\vec{\nu}, \matr{T}^{-1}; S) ~~~ \text{ and } ~~~
     \lambda_* = \min_{(\vec{\nu}, \matr{T}) \in \Domain_r} \lambda_m(\matr{T}^{-1} \vec{\nu}, \matr{T}^{-1}) \ge h(r^*). \]
  then there exists a universal constant $C > 0$ such that
  \[ \Exp\left[ f(\vec{\bar{w}}) \right] - f(\vec{w}^*) \le \frac{C \cdot h(r^*)}{\beta_*^5} \cdot \frac{d^2}{M} \left( 1 + \log(M) \right), \]
  \noindent where $\vec{\bar{\vec{w}}}$ is the output of Algorithm
  \ref{alg:projectedSGD}, and
  $h(x) \triangleq \frac{x}{4 (1 + x)^2 x^2 + \sqrt{1 + x}}$.
\end{lemma}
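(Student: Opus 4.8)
The plan is to apply Theorem~\ref{thm:mainSGDAnalysis} directly to $f = -\bar{\ell}$ restricted to the convex domain $\Domain_r$, so that the whole argument reduces to verifying the three hypotheses of that theorem and then substituting the resulting constants. First I would establish that $f$ is $\lambda$-strongly convex at \emph{every} point of $\Domain_r$, not just at the optimum. By Lemma~\ref{lem:strongConcavityofLogLikelihoodWithTruncation}, at a point $(\vec{\nu},\matr{T})$ the Hessian of $\bar{\ell}$ dominates $\frac{1}{2^{13}}(\beta/C)^4\,\lambda_m(\matr{T}^{-1})\,\matr{I}$ whenever $\normal(\matr{T}^{-1}\vec{\nu},\matr{T}^{-1};S)\ge\beta$. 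Taking the infimum over $\Domain_r$ and invoking the definitions of $\beta_*$ and $\lambda_*$, I obtain a uniform strong-convexity constant $\lambda = \Omega(\beta_*^4\lambda_*)$ that holds at all iterates. This step relies essentially on the fact that every parameter pair in $\Domain_r$ assigns measure at least $\beta_*>0$ to $S$; that $\beta_*$ is bounded away from $0$ for the radius $r = O(\log(1/\alpha)/\alpha^2)$ is exactly what Corollaries~\ref{corollary:conditional mass} and~\ref{corollary:box mass} guarantee, so I would cite them here.

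Next I would check the two stochastic conditions. Unbiasedness, $\Exp[\vec{v}^{(i)}\mid\vec{w}^{(i-1)}]\in\partial f(\vec{w}^{(i-1)})$, follows from the closed form of the gradient in~\eqref{eq:gradientofLikelihoodOneSample}: the single-sample gradient is the difference between the fixed data statistic and the expectation of the same statistic under $\normal(\matr{T}^{-1}\vec{\nu},\matr{T}^{-1},S)$, and the estimator produced by rejection sampling from the current parameters reproduces that expectation in expectation, hence matches $\nabla\bar{\ell}$ as given in~\eqref{eq:gradientofLikelihoodPopulation}. The bound on the second moment is supplied directly by Lemma~\ref{lem:normOfStochasticGradient}, which, again using that the measure of $S$ is at least $\beta_*$ throughout $\Domain_r$, yields $\Exp[\norm{\vec{v}^{(i)}}_2^2]\le\rho^2$ with $\rho^2 = \frac{100}{\beta_*}d^2 r^2$.

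Finally I would substitute $\lambda = \Omega(\beta_*^4\lambda_*)$ and $\rho^2 = \frac{100}{\beta_*}d^2 r^2$ into the conclusion $\Exp[f(\vec{\bar{w}})]-f(\vec{w}^*)\le\frac{\rho^2}{2\lambda M}(1+\log M)$ of Theorem~\ref{thm:mainSGDAnalysis}. The factor $1/\beta_*$ carried by $\rho^2$ and the factor $1/\beta_*^4$ carried by $1/\lambda$ combine to the $1/\beta_*^5$ appearing in the statement, while the $r^2$ in $\rho^2$ collapses against the lower bound on $\lambda_*$ recorded in the hypothesis to leave a single factor of $r$. This produces $\Exp[f(\vec{\bar{w}})]-f(\vec{w}^*)\le\frac{C\cdot r}{\beta_*^5}\cdot\frac{d^2}{M}(1+\log M)$ for a universal constant $C$, as claimed.

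The genuinely delicate point, and the place I would spend the most care, is ensuring that both preconditions hold \emph{uniformly} along the trajectory rather than merely at the minimizer: the strong-convexity constant and the gradient second-moment bound each degrade as $\normal(\matr{T}^{-1}\vec{\nu},\matr{T}^{-1};S)$ shrinks, so the whole substitution is vacuous unless every SGD iterate, as well as the comparison point $\vec{w}^*$, lives in a region where this measure stays above $\beta_*$. This is precisely the reason the optimization is projected onto $\Domain_r$ via Lemma~\ref{lem:projectionProcedure}, and why $r$ must be taken large enough (Corollary~\ref{corollary:box mass}) that $\Domain_r$ simultaneously contains the initialization and the true parameters $\vec{w}^*$ while still certifying $\beta_*>0$. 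Tying these dependencies together, in particular confirming that $\vec{w}^*\in\Domain_r$ so that it is a legitimate minimizer over the projected domain, is the part I expect to require the most bookkeeping.
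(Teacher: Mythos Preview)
Your proposal is correct and follows essentially the same approach as the paper. The paper's own proof is extremely terse: it simply says the result follows directly from Theorem~\ref{thm:mainSGDAnalysis} once one checks that the initialization $\vec{w}^{(0)}$ lies in $\Domain_r$, which is ensured by choosing $r = O\bigl(\log(1/\alpha)/\alpha^2\bigr)$ and appealing to Proposition~\ref{prop:bounds}. You have spelled out in full the steps the paper leaves implicit, namely the uniform strong-convexity constant from Lemma~\ref{lem:strongConcavityofLogLikelihoodWithTruncation}, the unbiasedness of the rejection-sampling gradient estimator, the second-moment bound from Lemma~\ref{lem:normOfStochasticGradient}, and the bookkeeping that shows both $\vec{w}^{(0)}$ and $\vec{w}^*$ lie in $\Domain_r$ so that $\beta_*$ is bounded away from zero along the trajectory; this is exactly what the paper intends but does not write out.
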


\begin{proof}
    This result follows directly from Theorem \ref{thm:mainSGDAnalysis},
  if our initial estimate $\vec{w}^{(0)}$ belongs to $\Domain_{r^*}$. To ensure
  this we set
  $r^* = O\left( \frac{\log \left(1 / \alpha\right)}{\alpha^2}\right)$ and we
  apply Proposition \ref{prop:bounds} and the Lemma follows.
\end{proof}

\noindent Using Lemma \ref{lem:expectedPerformanceSGD} and applying Markov's
inequality we get that
\begin{equation} \label{eq:MarkovFinalBound}
  \Prob \left( f(\vec{\bar{w}}) - f(\vec{w}^*) \ge \frac{3 C \cdot h(r^*)}{\beta_*^5} \cdot \frac{d^2}{M}
\left( 1 + \log(M) \right) \right) \le \frac{1}{3}.
\end{equation}
We can easily amplify the probability of success to $1 - \delta$ by repeating
$\log(1/\delta)$ from scratch the optimization procedure and keeping the
estimation that achieves the maximum log-likelihood value. The high probability
result enables the use of Lemma \ref{lem:strongConvexityToDistance} to get
closeness in parameter space.

To get our estimation we first repeat the SGD procedure $K = \log(1/\delta)$
times independently, with parameter $M$ each time. We then get the set of
estimates $\mathcal{E} = \{\bar{\vec{w}}_1, \bar{\vec{w}}_2, \dots,
\bar{\vec{w}}_K\}$. Our ideal final estimate would be
\[ \hat{\vec{w}} = \arg \min_{\bar{\vec{w}} \in \mathcal{E}} \bar{\ell}(\vec{w}) \]
\noindent but we don't have access to the exact value of $\bar{\ell}(\vec{w})$.
Because of \eqref{eq:MarkovFinalBound} we know that, with high probability
$1 - \delta$, for at least the 2/3 of the points
$\bar{\vec{w}}$ in $\mathcal{E}$ it is true that
$\bar{\ell}(\vec{w}) - \bar{\ell}(\vec{w}^*) \le \eta$ where
$\eta = \frac{3 C \cdot h(r^*)}{\beta_*^5} \cdot \frac{d^2}{M}
\left( 1 + \log(M) \right)$. Moreover we will prove later that
$\bar{\ell}(\vec{w}) - \bar{\ell}(\vec{w}^*) \le \eta$ implies
$\norm{\vec{w} - \vec{w}^*} \le c \cdot \eta$, where $c$ is a universal
constant. Therefore with high probability $1 - \delta$ for at least the 2/3 of
the points $\bar{\vec{w}}, \bar{\vec{w}'}$ in $\mathcal{E}$ it is true that
$\norm{\vec{w} - \vec{w}'} \le 2 c \cdot \eta$. Hence if we set $\hat{\vec{w}}$
to be a point that is at least $2 c \cdot \eta$ close to more that the half of
the points in $\mathcal{E}$ then with high probability $ 1- \delta$ we have that
$f(\vec{\bar{w}}) - f(\vec{w}^*) \le \eta$.

\noindent Hence we can condition on the event $f(\vec{\hat{w}}) - f(\vec{w}^*)
\le \frac{2 C \cdot h(r^*)}{\beta_*^5} \cdot \frac{d^2}{M} \left( 1 + \log(M)
\right)$ and we only lose probability at most $\delta$. Now remember that for
Lemma \ref{lem:expectedPerformanceSGD} to apply we have
$r^* = O\left( \frac{\log \left(1 / \alpha\right)}{\alpha^2}\right)$. Also,
using Corollary \ref{corollary:box mass} we get that $\beta^* \ge c_{\alpha}$
where is a constant $c_{\alpha}$ that depends only on the constant $\alpha$.
Hence, with probability at least $1 - \delta$ we have that
\[ f(\vec{\hat{w}}) - f(\vec{w}^*) \le c'_{\alpha} \cdot \frac{d^2}{M}
\left( 1 + \log(M) \right), \]
\noindent where $c'_{\alpha}$ is a constant that depends only on $\alpha$. Now
we can use Lemma \ref{lem:strongConvexityToDistance} to get that
\begin{equation} \label{eq:finalProofSecondToLast}
  \norm{\hat{\vec{w}} - \vec{w}^*}_2 \le c''_{\alpha} \sqrt{\frac{d^2}{M} \left(1 + \log (M)\right)}.
\end{equation}
\noindent Also, it holds that
\[ \norm{\hat{\vec{w}} - \vec{w}^*}_2^2 =
     \norm{\vec{\nu} - \vec{\nu}^*}_2^2 +
     \norm{\matr{T} - \matr{T}^*}_F^2 =
     \norm{\matr{\Sigma}^{-1} \vec{\mu} - \matr{\Sigma}^{* -1} \vec{\mu}^*}_2^2
          + \norm{\matr{\Sigma}^{-1} - \matr{\Sigma}^{* -1}}_F^2. \]
Hence, for $M \ge \tilde{O} \left( \frac{d^2}{\eps^2} \right)$ and using
\eqref{eq:finalProofSecondToLast} we have that \[ \norm{\matr{\Sigma}^{-1}
\vec{\mu} - \matr{\Sigma}^{* -1} \vec{\mu}^*}_2^2 + \norm{\matr{\Sigma}^{-1} -
\matr{\Sigma}^{* -1}}_F^2 \le \eps. \] The number of samples is $O(K M)$ and the
running time is $\poly(K, M, 1/\eps, d)$. Hence for $K = \log(1/\delta)$ and $M
\ge \tilde{O} \left( \frac{d^2}{\eps^2} \right)$ our theorem follows.

\clearpage
  \begin{algorithm}[H]
  \caption{Projected Stochastic Gradient Descent. Given access to samples from $\normal(\vec{\mu}^*, \matr{\Sigma}^*, S)$.}
  \label{alg:projectedSGD}
  \begin{algorithmic}[1]
    \Procedure{Sgd}{$M, \lambda$}\Comment{$M$: number of steps, $\lambda$: parameter}
    \State Compute $\hat{\vec{\mu}}_S$ and $\hat{\matr{\Sigma}}_S$ and apply
      affine transformation so that $\hat{\vec{\mu}}_S = \vec{0}$ and
      $\hat{\matr{\Sigma}}_S = \matr{I}$
    \State $\vec{w}^{(0)} \gets \begin{bmatrix} \matr{I}^{\flat} \\ \vec{0} \end{bmatrix}$
    \For{$i = 1, \dots, M$}
      \State Sample $\vec{x}^{(i)}$ from $\samplO$
      \State $\eta_i \gets \frac{1}{\lambda \cdot i}$
      \State $\vec{v}^{(i)} \gets \Call{GradientEstimation}{\vec{x}^{(i)}, \vec{w}^{(i - 1)}}$
      \State $\vec{r}^{(i)} \gets \vec{w}^{(i - 1)} - \eta_i \vec{v}^{(i)}$
      \State $\vec{w}^{(i)} \gets \Call{ProjectToDomain}{\vec{r}^{(i)}}$
    \EndFor
    \State $\bar{\vec{w}} \gets \frac{1}{M} \sum_{i = 1}^M \vec{w}^{(i)}$ \Comment{Output the average.}
    \State $\bar{\vec{w}} \leftarrow \matr{\Sigma}_S^{-1/2} \bar{\vec{w}} + \hat{\vec{\mu}}_S$ \Comment{Apply inverse affine transformation}
    \State \textbf{return} $\bar{\vec{w}}$
  \EndProcedure
  \end{algorithmic}
  \end{algorithm}
  \vspace{-15pt}
  \begin{algorithm}[H]
  \caption{The function to estimate the gradient of log-likelihood as in \eqref{eq:gradientofLikelihoodOneSample}.}
  \label{alg:estimateGradient}
  \begin{algorithmic}[1]
    \Function{GradientEstimation}{$\vec{x}, \vec{w}$}\Comment{$\vec{x}$: sample from $\normal(\vec{\mu}, \matr{\Sigma}, S)$}
    \State $\begin{bmatrix} \matr{T}^{\flat} \\ \vec{\nu} \end{bmatrix} \gets \vec{w}$
    \State $\vec{\mu} \gets \matr{T}^{-1} \vec{\nu}$
    \State $\matr{\Sigma} \gets \matr{T}^{-1}$
    \Repeat
    \State Sample $\vec{y}$ from $\normal(\vec{\mu}, \matr{\Sigma})$
    \Until{$M_S\left(\vec{y}\right) = 1$} \Comment{$M_S$ is the membership oracle of the set $S$.}
    \State \textbf{return} $- \begin{bmatrix} \left( - \frac{1}{2} \vec{x} \vec{x}^T \right)^{\flat} \\ \vec{x} \end{bmatrix}
              + \begin{bmatrix} \left( - \frac{1}{2} \vec{y} \vec{y}^T \right)^{\flat} \\ \vec{y} \end{bmatrix}$
  \EndFunction
  \end{algorithmic}
  \end{algorithm}
  \vspace{-15pt}
  \begin{algorithm}[H]
  \caption{The function that projects a current guess back to the domain $\Domain$.}
  \label{alg:projectToDomain}
  \begin{algorithmic}[1]
    \Function{ProjectToDomain}{$\vec{r}$} \Comment{let $r$ be the parameter of the domain $\Domain_r$}
    \State $\begin{bmatrix} \matr{T}^{\flat} \\ \vec{\nu} \end{bmatrix} \gets \vec{r}$
    \State $\vec{\nu}' \gets \arg \min_{\vec{b} : \norm{\vec{b}} \le r_1} \norm{\vec{b} - \vec{\nu}}_2^2$
    \Repeat ~binary search over $\lambda$
    \State solve the projection problem
           \[ \matr{T}' \gets \arg \min_{\matr{T'} \succeq r_3 \matr{I}} \norm{\matr{T} -  \matr{T}'}_F^2 + \lambda \norm{\matr{I} - \matr{T}'}_F^2 \]
    \Until{a $\matr{T}'$ is found with minimum objective value and $\norm{\matr{I} -
           \matr{T}'}_F^2 \le r_2^2$}
    \State \textbf{return} $(\vec{\nu}', \matr{T}')$
  \EndFunction
  \end{algorithmic}
  \end{algorithm}
  \vspace{-25pt}
  \begin{figure}[H]
    ~
    \caption{Description of the Stochastic Gradient Descent (SGD) algorithm for estimating the parameters of a truncated Normal.}
    \label{fig:algorithm}
  \end{figure}
  \clearpage

\section{Impossibility of estimation with an unknown truncation set} \label{sec:lower bound}

  We have showed that if one assumes query access to the truncation set, the
estimation problem for truncated Normals can be efficiently solved with very
few queries and samples.

  If the truncation set is unknown, as we show in this section, it is
information theoretically impossible to produce an estimate that is closer
than a constant in total variation distance to the true distribution even for
single-dimensional truncated Gaussians.

  To do this, we consider two Gaussian distributions
$\normal(\mu_1, \sigma_1^2)$ and $\normal(\mu_2, \sigma_2^2)$ with
\[ \dtv(\normal(\mu_1, \sigma_1^2), \normal(\mu_2, \sigma_2^2)) = \alpha.\]
We show that there exist a distribution over truncation sets $S_1$ with
$\normal(\mu_1, \sigma_1^2; S_1)$, $\mathcal{D}_1$, and a distribution over
truncation sets $S_2$ with $\normal(\mu_2, \sigma_2^2; S_2)$, $\mathcal{D}_2$,
such that a random instance $\lbrace S_1, \normal(\mu_1, \sigma_1^2) \rbrace$
with $S_1$ drawn from $\mathcal{D}_1$ is indistinguishable from a random
instance $\lbrace S_2, \normal(\mu_2, \sigma_2^2) \rbrace$ with $S_2$ drawn
from $\mathcal{D}_2$.

\begin{lemma}[Indistinguishability with unknown set] \label{lem:indistinguishability}
    Consider two single-dimensional Gaussian distributions
  $\normal(\mu_1, \sigma_1^2)$ and $\normal(\mu_2, \sigma_2^2)$ with
  $\dtv(\normal(\mu_1, \sigma_1^2), \normal(\mu_2, \sigma_2^2)) = 1 - \alpha$.
  The truncated Gaussian
  $\normal(\mu_1, \sigma_1^2, S_1)$ with an unknown set $S_1$ such that
  $\normal(\mu_1, \sigma_1^2; S_1) = \alpha$ is indistinguishable from the
  distribution $\normal(\mu_2, \sigma_2^2; S_2)$ with unknown set $S_2$ such
  that $\normal(\mu_2, \sigma_2^2; S_2) = \alpha$.
\end{lemma}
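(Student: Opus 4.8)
The plan is to make the hidden set carry all the blame: I will construct the two families $\mathcal{D}_1,\mathcal{D}_2$ of truncation sets so explicitly that, after the set is integrated out, both experiments emit samples from one and the same law --- the renormalized overlap of the two Gaussians. Write $f_i(x)=\normal(\mu_i,\sigma_i^2;x)$. The whole argument pivots on the elementary identity
\[ \int_{\reals}\min\{f_1(x),f_2(x)\}\,dx \;=\; 1-\dist_{TV}\!\big(\normal(\mu_1,\sigma_1^2),\normal(\mu_2,\sigma_2^2)\big)\;=\;\alpha, \]
so that $g(x):=\tfrac1\alpha\min\{f_1(x),f_2(x)\}$ is a genuine probability density.

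Next I would introduce the pointwise keep-probabilities $p_i(x)=\min\{1,\,f_{3-i}(x)/f_i(x)\}$, which are built precisely so that $f_i(x)\,p_i(x)=\min\{f_1(x),f_2(x)\}=\alpha\,g(x)$ and $\int p_i\,f_i=\alpha$. Suppose for each $i$ there is a distribution $\mathcal{D}_i$ over sets $S_i$ of $f_i$-mass \emph{exactly} $\alpha$ whose inclusion probabilities match $p_i$, i.e.\ $\Prob_{S_i\sim\mathcal{D}_i}[x\in S_i]=p_i(x)$. Then the density of a truncated sample, after integrating out the set, is $\Exp_{S_i}\!\big[f_i(x)\,\chara_{x\in S_i}/\normal(\mu_i,\sigma_i^2;S_i)\big]=\tfrac{f_i(x)}{\alpha}\,\Prob[x\in S_i]=g(x)$, using that every set in the support of $\mathcal{D}_i$ has mass exactly $\alpha$. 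Since this law is $g$ for both $i=1$ and $i=2$, and since the set is drawn afresh (and independently) for each sample, the entire --- even infinite --- sample is i.i.d.\ from the identical law $g$ under both priors; hence no test can tell them apart.

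The single nontrivial ingredient is the existence of such $\mathcal{D}_i$: I must realise the ``soft truncation'' $p_i$, which only carries mass $\alpha$ \emph{on average}, as an honest mixture of hard sets each of mass exactly $\alpha$. I would get this from a convexity argument. The body $K_i=\{\,q:\reals\to[0,1]\ \text{measurable}\ :\ \int q\,f_i=\alpha\,\}$ is convex and weak-$*$ compact in $L^\infty(f_i\,dx)$, and its extreme points are exactly the indicators $\chara_S$ of sets $S$ with $f_i$-mass $\alpha$ (any $q$ taking a value strictly inside $(0,1)$ on a positive-measure set can be perturbed up on one half and down on another, keeping $\int q\,f_i=\alpha$, so it is not extreme). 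As $p_i\in K_i$, Choquet's theorem writes $p_i$ as the barycenter of a probability measure $\mathcal{D}_i$ supported on these extreme points; pairing the barycenter identity with an arbitrary $\phi\in L^1(f_i\,dx)$ gives $\int p_i\,\phi\,f_i=\int\Prob[x\in S]\,\phi\,f_i$, whence $\Prob_{S\sim\mathcal{D}_i}[x\in S]=p_i(x)$ for a.e.\ $x$, as needed. (Equivalently, since $(\reals,f_i\,dx)$ is non-atomic, one can produce $\mathcal{D}_i$ by a direct Lyapunov-style dependent rounding of $p_i$.)

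I expect this decomposition to be the crux. The average-mass condition $\int p_i\,f_i=\alpha$ is easy, but the hypothesis $\normal(\mu_i,\sigma_i^2;S_i)=\alpha$ demands that \emph{every} realised set have mass exactly $\alpha$, so the naive horizontal-threshold coupling $S=\{x:U\le p_i(x)\}$ with $U$ uniform is not admissible (its mass fluctuates with $U$); the extreme-point/rounding step is what repairs this. Once $\mathcal{D}_1,\mathcal{D}_2$ are in hand, the remaining steps --- the overlap identity and the cancellation $f_i p_i=\alpha g$ --- are routine, and the separation hypothesis $\dist_{TV}=1-\alpha$ enters only through fixing the overlap mass at the prescribed value $\alpha$.
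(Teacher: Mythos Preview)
Your overlap density $g=\tfrac1\alpha\min\{f_1,f_2\}$ and keep-probabilities $p_i=\min\{1,f_{3-i}/f_i\}$ are exactly the paper's ingredients, and your computation that a \emph{single} truncated sample, averaged over $\mathcal D_i$, has law $g$ is correct. The gap is the sentence ``since the set is drawn afresh (and independently) for each sample.'' In the model of the lemma the set is drawn \emph{once} and then arbitrarily many samples are taken from the \emph{same} truncated Gaussian. After averaging over $\mathcal D_i$, the joint law of $(x_1,\dots,x_n)$ is
\[
\alpha^{-n}\Bigl(\textstyle\prod_j f_i(x_j)\Bigr)\cdot\Prob_{S\sim\mathcal D_i}\bigl[x_1\in S,\dots,x_n\in S\bigr],
\]
so matching $\prod_j g(x_j)$ requires $\Prob[x_1,\dots,x_n\in S]=\prod_j p_i(x_j)$, i.e.\ \emph{independence of the inclusion events across distinct points}. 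A Choquet representation only pins down the one-point marginals $\Prob[x\in S]=p_i(x)$ and will in general correlate the events $\{x_j\in S\}$; for instance, an extreme-point mixture supported on intervals would let the observer learn the interval from a handful of samples and then read off the underlying Gaussian from its shape there. Your two priors therefore need not induce the same multi-sample law, and the argument does not close.

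The paper handles this not via Choquet but via \emph{deferred randomness}. It lets $S_i$ include each $x\in\reals$ independently with probability $p_i(x)$ and, rather than realising the whole set, decides membership lazily the first time a point is drawn. Since the Gaussians are continuous, almost surely no point is drawn twice, so every membership query is answered with a fresh Bernoulli$(p_i(x))$ coin; the resulting process is literally rejection sampling for $g$, and the entire (even infinite) sample is i.i.d.\ $g$ under either prior. That is what supplies the needed pointwise independence. The price is that the realised ``set'' is only instantiated on countably many points and does not literally have mass exactly $\alpha$; the paper accepts this and notes in the subsequent remark that one can coarsen to honest measurable sets at the cost of a negligible collision probability. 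So the exact-mass constraint you worked to enforce via Choquet is the wrong target: what the proof actually requires is independence of inclusion across distinct sample points, which your decomposition does not guarantee.
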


\begin{proof}
    We define the randomized family of sets $\mathcal{D}_i$:
  The random set $S_i \sim \mathcal{D}_i$ is constructed by adding every point
  $x \in \mathbb{R}$ with probability
  $\min\{\normal(\mu_{3 - i}, \sigma^2_{3 - i}; x)/\normal(\mu_i, \sigma^2_i; x), 1\}$.

    Now consider the distribution $p$ with density
  $\frac{1}{\alpha} \min\{ \normal(\mu_1, \sigma_1^2; x), \normal(\mu_2,   \sigma_2^2; x) \}$.
  This is a proper distribution as
  $\dtv(\normal(\mu_1, \sigma_1^2), \normal(\mu_2, \sigma_2^2)) = 1 - \alpha$.
  Note that samples from $p$ can be generated by performing the following
  rejection sampling process: Pick a sample from the distribution
  $\normal(\mu_i, \sigma^2_i)$ and reject it with probability
  $\min\{ \normal(\mu_{3-i}, \sigma^2_{3 - i}; x)/\normal(\mu_i, \sigma^2_i; x), 1 \}$.

    We now argue that samples from the distribution
  $\normal(\mu_i, \sigma^2_i; S_i)$ for a random $S_i \sim \mathcal{D}_i$ are
  indistinguishable from $p$. This is because an alternative way of sampling
  the distribution $\normal(\mu_i, \sigma^2_i; S_i)$ can be sampled as
  follows. Draw a sample $x$ from  $\normal(\mu_i, \sigma^2_i)$ and then check
  if $x \in S_i$. By the principle of deferred randomness, we may not commit
  to a particular set $S_i$ only decide whether $x \in S_i$ after drawing $x$
  as long as the selection is consistent. That is every time we draw the same
  $x$ we must output the same answer. This sampling process is identical to
  the sampling process of $p$ until the point where an $x_i$ is sampled twice.
  As the distributions are continuous and every time a sample is accepted with
  probability $\alpha > 0$ no collisions will be found in this process.
\end{proof}

  The following corollary completes the proof of Theorem~\ref{thm:lower bound}.

\begin{corollary} \label{cor:lower bound}
    For all $\alpha>0$, given infinitely many samples from a univariate
  normal ${\cal N}(\mu, \sigma^2)$, which are truncated to an unknown set $S$
  of measure $\alpha$, it is impossible to estimate parameters $\hat{\mu}$ and
  $\hat{\sigma}^2$ such that the distributions  ${\cal N}(\mu, \sigma^2)$ and
  ${\cal N}(\hat{\mu}, \hat{\sigma}^2)$ are guaranteed to be within
  $\frac{1-\alpha}{2}$.
\end{corollary}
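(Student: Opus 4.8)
The plan is to reduce the corollary to Lemma~\ref{lem:indistinguishability} and then derive a contradiction from the triangle inequality for total variation distance. First I would fix a convenient symmetric pair of univariate Gaussians at the prescribed distance: take $D_1 = \normal(m, \sigma^2)$ and $D_2 = \normal(-m, \sigma^2)$ and choose $m$ so that $\dist_{TV}(D_1, D_2) = 1-\alpha$; such an $m$ exists because the total variation distance of two equal-variance Gaussians equals $2\Phi(m/\sigma)-1$, which is continuous and strictly increasing in the separation $2m$ and ranges over $[0,1)$. By Lemma~\ref{lem:indistinguishability} there are distributions $\mathcal D_1, \mathcal D_2$ over truncation sets, each assigning measure $\alpha$, such that a truncated sample from $(D_1, S_1)$ with $S_1 \sim \mathcal D_1$ has exactly the same law (the distribution $p$ of the normalized pointwise minimum) as a truncated sample from $(D_2, S_2)$ with $S_2 \sim \mathcal D_2$.

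Next I would set up the contradiction. Suppose some estimator, given infinitely many truncated samples, were guaranteed to output $(\hat\mu,\hat\sigma^2)$ with $\dist_{TV}(\normal(\mu,\sigma^2),\normal(\hat\mu,\hat\sigma^2)) \le \tfrac{1-\alpha}{2}$ on every truncated instance. Since the observed sample law is identical under the two instances above, the induced law of the estimator's output $\hat D = \normal(\hat\mu,\hat\sigma^2)$ is identical as well, so almost every realized estimate $\hat D$ must satisfy the guarantee for $D_1$ and for $D_2$ simultaneously, namely $\dist_{TV}(\hat D, D_1) \le \tfrac{1-\alpha}{2}$ and $\dist_{TV}(\hat D, D_2) \le \tfrac{1-\alpha}{2}$. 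The triangle inequality then gives $1-\alpha = \dist_{TV}(D_1, D_2) \le \dist_{TV}(D_1,\hat D) + \dist_{TV}(\hat D, D_2) \le 1-\alpha$, forcing equality throughout.

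The final and most delicate step is to show this forced equality cannot be realized by any Gaussian, which is exactly why the symmetric choice of $D_1, D_2$ is convenient. Writing overlaps as $1 - \dist_{TV}$ and using the pointwise identity that $\min(a,b)+\min(a,c) = a + \min(b,c)$ holds precisely when $a$ lies between $b$ and $c$, the equality above forces the density of $\hat D$ to be sandwiched between those of $D_1$ and $D_2$ at almost every point. Comparing Gaussian tails as $x \to \pm\infty$ then pins the variance of $\hat D$ to be exactly $\sigma^2$, and matching the densities at $x=0$, where $f_{D_1}(0)=f_{D_2}(0)$, forces $\hat\mu \in \{m,-m\}$, i.e. $\hat D \in \{D_1, D_2\}$; but each of these lies at distance $1-\alpha > \tfrac{1-\alpha}{2}$ from the other, contradicting the guarantee. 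I expect this tail/sandwiching argument to be the main obstacle, since the triangle inequality alone yields only the non-strict relation $1-\alpha \le 1-\alpha$: the entire content of the impossibility is that no Gaussian can occupy the total-variation midpoint of two equal-variance, oppositely shifted Gaussians. Together with Lemma~\ref{lem:indistinguishability}, this establishes Theorem~\ref{thm:lower bound}.
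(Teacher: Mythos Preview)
Your argument follows the same skeleton as the paper's: invoke Lemma~\ref{lem:indistinguishability} to produce two indistinguishable truncated instances whose underlying Gaussians are $1-\alpha$ apart in total variation, and then argue by the triangle inequality that no single estimate can be close to both. The paper's version is a one-line affair---it takes the two Gaussians at distance strictly greater than $1-\alpha$ and concludes that any estimate is strictly more than $\tfrac{1-\alpha}{2}$ from one of them, thereby avoiding the equality case altogether. You instead place the Gaussians at distance exactly $1-\alpha$ (which is what the lemma literally provides for sets of measure exactly $\alpha$) and then rule out the equality case directly: your sandwiching argument (forcing $\hat f$ between $f_{D_1}$ and $f_{D_2}$ pointwise, pinning the variance from tails, and pinning the mean from the crossing at $x=0$) is correct and yields a clean contradiction under the $\le$ reading of ``within $\tfrac{1-\alpha}{2}$.'' So this is not a different method, just a more careful execution of the same one; the extra work you flag as the ``main obstacle'' is precisely the detail the paper's informal strict-inequality phrasing elides.
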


  To see why the corollary is true, note that since it is impossible to
distinguish between the truncated Gaussian distributions
$\normal(\mu_1, \sigma_1^2, S_1)$ and
$\normal(\mu_2, \sigma_2^2, S_2)$ with
$\dtv(\normal(\mu_1, \sigma_1^2), \normal(\mu_2, \sigma_2^2)) > 1 - \alpha$,
any estimated ${\cal N}(\hat{\mu}, \hat{\sigma}^2)$ will satisfy either
\[ \dtv(\normal(\mu_1, \sigma_1^2), \normal(\hat{\mu}, \hat{\sigma}^2)) > \frac{1 - \alpha} 2 \quad \text{ or } \quad \dtv(\normal(\mu_2, \sigma_2^2), \normal(\hat{\mu}, \hat{\sigma}^2)) > \frac {1 - \alpha} 2. \]

\begin{remark}
    The construction in Lemma~\ref{lem:indistinguishability} uses random sets
  $S_1$ and $S_2$ that select each point on the real line with some
  probability. One may use coarser sets by including all points within some
  range $\eps$ of the randomly chosen points. In this case the collision
  probability is no longer $0$ and depends on $\eps$. Given that no collisions
  are seen the two cases are again indistinguishable. Moreover, for very small
  $\eps$, an extremely large number of samples are needed to see a collision.
\end{remark}

\section*{Acknowledgements}
The authors were supported by NSF CCF-1551875, CCF-1617730, CCF-1650733,
CCF-1733808, CCF-1740751 and IIS-1741137.

  \bibliographystyle{alpha}
  \bibliography{ref}

  \appendix

\section{Missing Proofs from Section \ref{sec:upper bound}} \label{app:upper bound}

\subsection*{Proof of Claim \ref{clm:covarianceWithoutTruncationDominance}} \label{app:proof:clm:covarianceWithoutTruncationDominance}

We first define the following matrices
\[ Q(\vec{\mu}, \matr{\Sigma}) \triangleq \frac{1}{2} \matr{\Sigma} \otimes \matr{\Sigma} + \frac{1}{4} (\vec{\mu} \vec{\mu}^T) \otimes \matr{\Sigma} + \frac{1}{4} \vec{\mu} \otimes \matr{\Sigma} \otimes \vec{\mu}^T + \frac{1}{4} \vec{\mu}^T \otimes \matr{\Sigma} \otimes \vec{\mu} + \frac{1}{4} \matr{\Sigma} \otimes (\vec{\mu} \vec{\mu}^T) \]
\[ R(\vec{\mu}, \matr{\Sigma}) \triangleq \frac{1}{2} \left( \vec{\mu} \otimes \matr{\Sigma} + \matr{\Sigma} \otimes \vec{\mu} \right) \]
\[ B(\vec{\mu}, \matr{\Sigma}) \triangleq \frac{1 + \rho}{4} (\vec{\mu} \vec{\mu}^T) \otimes \matr{\Sigma} + \frac{1 + \rho}{4} \vec{\mu} \otimes \matr{\Sigma} \otimes \vec{\mu}^T + \frac{1 + \rho}{4} \vec{\mu}^T \otimes \matr{\Sigma} \otimes \vec{\mu} + \frac{1 + \rho}{4} \matr{\Sigma} \otimes (\vec{\mu} \vec{\mu}^T) \]
and we can prove that the following holds.
\begin{align} \label{clm:covarianceWithoutTruncation}
  \Cov_{\vec{z} \sim \normal(\vec{\mu},
                 \matr{\Sigma})} \left[
                 \begin{bmatrix}
                   \left( - \frac{1}{2} \vec{z}
                    \vec{z}^T \right)^{\flat} \\
                   \vec{z}
                 \end{bmatrix},
                 \begin{bmatrix}
                   \left( - \frac{1}{2} \vec{z}
                     \vec{z}^T \right)^{\flat} \\
                   \vec{z}
                 \end{bmatrix} \right] =
                 \begin{bmatrix}
                   Q(\vec{\mu}, \matr{\Sigma}) & R(\vec{\mu}, \matr{\Sigma}) \\
                   R^T(\vec{\mu}, \matr{\Sigma}) & \matr{\Sigma}
                 \end{bmatrix}.
\end{align}
Once we have proved the above equation we have that
\begin{align} \label{clm:covarianceWithoutTruncationDomination:proof:1}
  \begin{bmatrix}
    Q(\vec{\mu}, \matr{\Sigma}) & R(\vec{\mu}, \matr{\Sigma}) \\
    R^T(\vec{\mu}, \matr{\Sigma}) & \matr{\Sigma}
  \end{bmatrix}
  -
  \begin{bmatrix}
    \tilde{Q}(\vec{\mu}, \matr{\Sigma}) & \matr{0} \\
    \matr{0} & \frac{\rho - 3}{1 + \rho} \cdot \matr{\Sigma}
  \end{bmatrix}
  =
  \begin{bmatrix}
    B(\vec{\mu}, \matr{\Sigma}) & - R(\vec{\mu}, \matr{\Sigma}) \\
    - R^T(\vec{\mu}, \matr{\Sigma}) & \frac{4}{1 + \rho} \cdot \matr{\Sigma}
  \end{bmatrix}
  \triangleq \matr{D}
\end{align}
and our goal is to show that the matrix $\matr{D}$ on the right hand side is
positive semi-definite. Using Schur's complement theorem for block symmetric
matrices we have that $\matr{D} \succeq \matr{0}$ is implied by the following
conditions
\[ \frac{4}{1 + \rho} \cdot \matr{\Sigma} \succ \matr{0} \quad \mathrm{ and } \quad B(\vec{\mu}, \matr{\Sigma}) - \frac{1 + \rho}{4} \cdot R^T(\vec{\mu}, \matr{\Sigma}) \cdot \matr{\Sigma}^{-1} \cdot R(\vec{\mu}, \matr{\Sigma}) \succeq \matr{0}. \]
\noindent The first condition is implied by our assumption that we consider only
covariance matrices that have full rank, as we will also see later in the proof
of our algorithm. On the other hand, if we use the mixed product property of the
tensor product together with the easy to check identities
$\vec{\mu}^T \otimes \matr{\Sigma} \otimes \vec{\mu} = (\matr{I} \otimes \vec{\mu}) \cdot (\vec{\mu}^T \otimes \matr{\Sigma})$ and
$\vec{\mu} \otimes \matr{\Sigma} \otimes \vec{\mu}^T = (\vec{\mu} \otimes \matr{I}) \cdot (\matr{\Sigma} \otimes \vec{\mu}^T)$ we can do algebraic
calculations and prove that
\[ B(\vec{\mu}, \matr{\Sigma}) - \frac{1 + \rho}{4} \cdot R^T(\vec{\mu}, \matr{\Sigma}) \cdot \matr{\Sigma}^{-1} \cdot R(\vec{\mu}, \matr{\Sigma}) = \matr{0}. \]
\noindent Therefore we conclude that $\matr{D} \succeq \matr{0}$ and the claim
follows. To conclude we need to prove \eqref{clm:covarianceWithoutTruncation}.

\begin{proof}[Proof of Equation \eqref{clm:covarianceWithoutTruncation}]
    We first observe that
  $\left(\vec{z} \vec{z}^T\right)^{\flat} = \vec{z} \otimes \vec{z}$. Then we
  can write $\vec{z} = \vec{y} + \vec{\mu}$ where $\vec{y}$ follows the
  distribution $\normal(\vec{0}, \matr{\Sigma})$. If we substitute $\vec{z}$
  with $\vec{y} + \vec{\mu}$ then we can use Isserlis' Theorem, also known as
  Wick's probability theorem, \cite{Isserlis1918, Wick1950} together with
  Theorem 4.12 from \cite{DKK+16b} and the equation
  \eqref{clm:covarianceWithoutTruncation} follows.
\end{proof}

\subsection*{Proof of Claim \ref{clm:covarianceWithoutTruncationEigenvalues}} \label{app:proof:clm:covarianceWithoutTruncationEigenvalues}

    The fact that
  $\frac{\rho - 3}{1 + \rho} \cdot \matr{\Sigma} \succeq \frac{\sigma_m(\matr{\Sigma})}{16 \norm{\vec{\mu}}_2^2 + \sqrt{\sigma_m(\matr{\Sigma})}} \cdot \matr{I}_d$ follows directly from the
  definition of $\rho$ and $\sigma_m(\matr{\Sigma})$ so it remains to prove that
  $\tilde{Q}(\vec{\mu}, \matr{\Sigma}) \succeq \frac{\sigma_m(\matr{\Sigma})}{4} \cdot \matr{I}_{d^2}$. We remind that
  \[ \tilde{Q}(\vec{\mu}, \matr{\Sigma}) = \frac{1}{2} \matr{\Sigma} \otimes \matr{\Sigma} - \frac{\rho}{4} \cdot (\vec{\mu} \vec{\mu}^T) \otimes \matr{\Sigma} - \frac{\rho}{4} \cdot \vec{\mu} \otimes \matr{\Sigma} \otimes \vec{\mu}^T - \frac{\rho}{4} \cdot \vec{\mu}^T \otimes \matr{\Sigma} \otimes \vec{\mu} - \frac{\rho}{4} \cdot \matr{\Sigma} \otimes (\vec{\mu} \vec{\mu}^T). \]
  We first observe that
  $\frac{1}{\norm{\vec{\mu}}_2^2} \cdot (\vec{\mu} \vec{\mu}^T) \otimes \matr{\Sigma} = (\vec{w} \vec{w}^T) \otimes \matr{\Sigma}$,
  where $\vec{w}$ is a unit vector and hence
  \begin{align} \label{eq:proof:clm:covarianceWithoutTruncationEigenvalues:1}
    \frac{1}{8} \matr{\Sigma} \otimes \matr{\Sigma} - \frac{\rho}{4} \cdot (\vec{\mu} \vec{\mu}^T) \otimes \matr{\Sigma} & = \left( \frac{1}{8} \cdot \matr{\Sigma} - \frac{\sqrt{\sigma_m(\matr{\Sigma})}}{16} \cdot (\vec{w} \vec{w}^T) \right) \otimes \matr{\Sigma} \succeq \frac{\sigma_m(\matr{\Sigma})}{16} \cdot \matr{I}
  \end{align}
  where the last inequality follows from the fact that the eigenvalues of the
  tensor product are the products of the eigenvalues and the fact that
  $2 \matr{\Sigma} - \sqrt{\sigma_m(\matr{\Sigma})} \cdot (\vec{w} \vec{w}^T) \succeq \sqrt{\sigma_m(\matr{\Sigma})} \cdot \matr{I}$.
  The latter follows from the definition of $\sigma_m(\matr{\Sigma})$ and the
  fact that $\vec{w} \vec{w}^T \preceq \matr{I}$ since $\vec{w}$ is a unit
  vector.
  \smallskip

    Next we consider the matrix
  $2 \matr{\Sigma} \otimes \matr{\Sigma} - 4 \rho \cdot \vec{\mu}^T \otimes \matr{\Sigma} \otimes \vec{\mu}$
  which is easy to see that it is equal to the matrix
  $\matr{E} \triangleq 2 \cdot \matr{\Sigma} \otimes \matr{\Sigma} - \sqrt{\sigma_m(\matr{\Sigma})} \cdot (\matr{I} \otimes \vec{w}) \cdot (\vec{w}^T \otimes \matr{\Sigma})$,
  where $\vec{w}$ is a unit vector. Now if we prove that
  $\matr{\Sigma}^{-1} \cdot \matr{E} \cdot \matr{\Sigma}^{-1} \succeq \matr{I}$
  then we can conclude that
  $\matr{E} \succeq \sigma_m(\matr{\Sigma}) \cdot \matr{I}$ since the maximum
  eigenvalue of $\matr{\Sigma}^{- 1}$ is $1/\sqrt{\sigma_m(\matr{\Sigma})}$.
  Therefore we consider the matrix
  $\matr{F} \triangleq \matr{\Sigma}^{-1} \cdot \matr{E} \cdot \matr{\Sigma}^{-1}$
  which using the mixed product property of the tensor product becomes
  \[ \matr{F} = 2 \matr{I}_{d^2} - \sqrt{\sigma_m(\matr{\Sigma})} \cdot (\matr{\Sigma}^{-1} \otimes \vec{w}) \cdot (\vec{w}^T \otimes \matr{I}_d) \]
  and therefore for any unit vector $\vec{u}$ in $\reals^{d^2}$ we have that
  \begin{align} \label{eq:proof:clm:covarianceWithoutTruncationEigenvalues:2:a}
    \vec{u}^T \cdot \matr{F} \cdot \vec{u} = 2 -  \sqrt{\sigma_m(\matr{\Sigma})} \cdot \vec{u}^T \cdot (\matr{\Sigma}^{-1} \otimes \vec{w}) \cdot (\vec{w}^T \otimes \matr{I}_d) \cdot \vec{u}.
  \end{align}
  Now observe that
  \[ \norm{\vec{u}^T \cdot (\matr{\Sigma}^{-1} \otimes \vec{w})}_2^2 = \vec{u}^T \cdot (\matr{\Sigma}^{-1} \otimes \vec{w}) \cdot (\matr{\Sigma}^{-1} \otimes \vec{w}^T) \cdot \vec{u} = \vec{u}^T \cdot (\matr{\Sigma}^{-2} \otimes \vec{w} \vec{w}^T) \cdot \vec{u} \]
  but since $\vec{w}$ is a unit vector we have that the maximum eigenvalue of
  $(\matr{\Sigma}^{-2} \otimes \vec{w} \vec{w}^T)$ is
  $1/\sigma_m(\matr{\Sigma})$ and hence we get that
  \begin{align} \label{eq:proof:clm:covarianceWithoutTruncationEigenvalues:2:b}
    \norm{\vec{u}^T \cdot (\matr{\Sigma}^{-1} \otimes \vec{w})}_2 \le \frac{1}{\sqrt{\sigma_m(\matr{\Sigma})}}.
  \end{align}
  Also if we use again the fact that $\vec{w}$ is a unit vector we have that
  \begin{align} \label{eq:proof:clm:covarianceWithoutTruncationEigenvalues:2:c}
    \norm{(\vec{w}^T \otimes \matr{I}_d) \cdot \vec{u}}_2^2 =  \vec{u}^T \cdot (\vec{w} \otimes \matr{I}_d) \cdot (\vec{w}^T \otimes \matr{I}_d) \cdot \vec{u} \le 1.
  \end{align}
  If we put together
  \eqref{eq:proof:clm:covarianceWithoutTruncationEigenvalues:2:a},
  \eqref{eq:proof:clm:covarianceWithoutTruncationEigenvalues:2:b}, and
  \eqref{eq:proof:clm:covarianceWithoutTruncationEigenvalues:2:c} we get that
  \[ \vec{u}^T \cdot \matr{F} \cdot \vec{u} \ge 1 \]
  and therefore $\matr{F} \succeq \matr{I}$ which implies
  $\matr{E} \succeq \sigma_m(\matr{\Sigma}) \cdot \matr{I}$ and hence we have
  that
  \begin{align} \label{eq:proof:clm:covarianceWithoutTruncationEigenvalues:2}
    \frac{1}{8} \matr{\Sigma} \otimes \matr{\Sigma} - \frac{\rho}{4} \cdot \vec{mu}^T \otimes \matr{\Sigma} \otimes \vec{\mu} \succeq \frac{\sigma_m(\matr{\Sigma})}{16} \cdot \matr{I}.
  \end{align}
  Similarly to \eqref{eq:proof:clm:covarianceWithoutTruncationEigenvalues:1}
  and \eqref{eq:proof:clm:covarianceWithoutTruncationEigenvalues:2} we can prove
  that
  \[ \frac{1}{8} \matr{\Sigma} \otimes \matr{\Sigma} - \frac{\rho}{4} \cdot \matr{\Sigma} \otimes (\vec{\mu} \vec{\mu}^T) \succeq \frac{\sigma_m(\matr{\Sigma})}{16} \cdot \matr{I}, \text{ and that} \]
  \[ \frac{1}{8} \matr{\Sigma} \otimes \matr{\Sigma} - \frac{\rho}{4} \cdot \vec{mu} \otimes \matr{\Sigma} \otimes \vec{\mu}^T \succeq \frac{\sigma_m(\matr{\Sigma})}{16} \cdot \matr{I}. \]
  These combined with
  \eqref{eq:proof:clm:covarianceWithoutTruncationEigenvalues:1} and
  \eqref{eq:proof:clm:covarianceWithoutTruncationEigenvalues:2} imply that
  $\tilde{Q}(\vec{\mu}, \matr{\Sigma}) \succeq \frac{\sigma_m(\matr{\Sigma})}{4} \cdot \matr{I}_{d^2}$
  and the claim follows.

\subsection*{Proof of Claim \ref{clm:eigenvaluesofRprimevsRstar}} \label{app:proof:clm:eigenvaluesofRprimevsRstar}

    We have that the matrices $\matr{R}'$ and $\matr{R}^*$ are of the form
  \[ \matr{R}' = \Exp_{\vec{w} \sim \mathcal{D}} \left[ (\vec{w} - \vec{q}) (\vec{w} - \vec{q})^T \right] \]
  \[ \matr{R}^* = \Exp_{\vec{w} \sim \mathcal{D}} \left[ (\vec{w} - \Exp_{\vec{w} \sim \mathcal{D}}[\vec{w}]) (\vec{w} - \Exp_{\vec{w} \sim \mathcal{D}}[\vec{w}])^T \right] \]
  \noindent where $\vec{w}$ corresponds to the vector
  $\begin{bmatrix}
    \left( - \frac{1}{2} \vec{z}
     \vec{z}^T \right)^{\flat} \\
    \vec{z}
  \end{bmatrix}$, $\mathcal{D}$ is the distribution of $\vec{w}$ when $\vec{z}$
  follows the distribution $\normal(\vec{\mu}, \matr{\Sigma})$ and $\vec{q}$ is
  the expected value of $\vec{w}$ when $\vec{z}$ follows the truncated normal
  distribution $\normal(\vec{\mu}, \matr{\Sigma}, S)$. In order to prove Claim
  \ref{clm:eigenvaluesofRprimevsRstar} we prove the more general statement that
  $\matr{R}' \succeq \matr{R}^*$ for any distribution $\mathcal{D}$ and any
  vector $\vec{q}$. Let $\vec{u}$ be an arbitrary real vector with unit length
  and dimension $d^2 + d$. We have that
  \[ \vec{u}^T \matr{R}' \vec{u} = \Exp_{\vec{w} \sim \mathcal{D}}\left[ (\vec{u}^T \vec{w} - \vec{u}^T \vec{q})^2 \right], \text{ and } \]
  \[ \vec{u}^T \matr{R}^* \vec{u} = \Exp_{\vec{w} \sim \mathcal{D}} \left[ (\vec{u}^T \vec{w} - \Exp_{\vec{w} \sim \mathcal{D}}[\vec{u}^T \vec{w}])^2 \right]. \]
  \noindent Hence if we define $\mathcal{D}_{\vec{u}}$ to be the distribution of
  $\vec{u}^T \vec{w}$ and $t \triangleq \vec{u}^T \vec{q}$ we have that
  \[ \vec{u}^T \matr{R}' \vec{u} = \Exp_{w \sim \mathcal{D}_{\vec{u}}}\left[ (w - t)^2 \right], \text{ and } \]
  \[ \vec{u}^T \matr{R}^* \vec{u} = \Exp_{w \sim \mathcal{D}_{\vec{u}}} \left[ (w - \Exp_{w \sim \mathcal{D}_{\vec{u}}}[w])^2 \right]. \]
  Now it is very easy to see that
  $\Exp_{w \sim \mathcal{D}_{\vec{u}}}[w] = \argmin_{x \in \reals} \Exp_{w \sim \mathcal{D}_{\vec{u}}}[(w - x)^2]$
  from which we can immediately see that
  \[ \Exp_{w \sim \mathcal{D}_{\vec{u}}}\left[ (w - t)^2 \right] \ge \Exp_{w \sim \mathcal{D}_{\vec{u}}} \left[ (w - \Exp_{w \sim \mathcal{D}_{\vec{u}}}[w])^2 \right] \]
  and hence $\vec{u}^T \matr{R}' \vec{u} \ge \vec{u}^T \matr{R}^* \vec{u}$.
  Since we picked $\vec{u}$ arbitrarily, the above holds for any unit vector
  $\vec{u}$ and therefore the claim follows.
 \end{document}